\numberwithin{equation}{section}
\theoremstyle{plain}
\newtheorem{lemma}{Lemma}[section]
\newtheorem{theorem}[lemma]{Theorem}
\newtheorem{proposition}[lemma]{Proposition}
\newtheorem{corollary}[lemma]{Corollary}
\theoremstyle{definition}
\newtheorem{construction}[lemma]{Construction}
\newtheorem{example}[lemma]{Example}
\newtheorem*{ack}{Acknowledgements}
\theoremstyle{remark} 
\newtheorem{remark}[lemma]{Remark} 
\newtheorem*{claim}{Claim} 
\newcommand{\Coker}{\operatorname{Coker}}
\renewcommand{\dim}{\operatorname{dim}}
\newcommand{\End}{\operatorname{End}}
\newcommand{\Ext}{\operatorname{Ext}}
\newcommand{\tExt}{\operatorname{\rlap{$\smash{\widehat{\mathrm{Ext}}}$}\phantom{\mathrm{Ext}}}}
\newcommand{\Hom}{\operatorname{Hom}}
\newcommand{\PHom}{\operatorname{PHom}}
\newcommand{\id}{\operatorname{id}}
\newcommand{\Inj}{\operatorname{\mathsf{Inj}}}
\newcommand{\KacInj}[1]{\mathsf K_{\mathrm{ac}}(\Inj #1)}
\newcommand{\KInj}[1]{\mathsf K(\Inj #1)}
\renewcommand{\mod}{\operatorname{\mathsf{mod}}}
\newcommand{\Mod}{\operatorname{\mathsf{Mod}}}
\newcommand{\Proj}{\operatorname{Proj}}
\newcommand{\rank}{\operatorname{rank}}
\newcommand{\sEnd}{\underline{\End}}
\newcommand{\sHom}{\underline{\Hom}}
\newcommand{\Spec}{\operatorname{Spec}}
\newcommand{\StMod}{\operatorname{\mathsf{StMod}}}
\newcommand{\stmod}{\operatorname{\mathsf{stmod}}}
\newcommand{\supp}{\operatorname{supp}}
\newcommand{\Thick}{\operatorname{\mathsf{Thick}}}
\newcommand{\tors}{\gamma}
\newcommand{\comp}{\mathop{\circ}}
\newcommand{\col}{\colon}
\newcommand{\ges}{{\scriptscriptstyle\geqslant}}
\newcommand{\kos}[2]{{#1}/\!\!/{#2}}
\newcommand{\op}{\mathrm{op}}
\newcommand{\da}{{\downarrow}}
\newcommand{\lra}{\longrightarrow}
\newcommand{\xra}{\xrightarrow}
\newcommand{\mcF}{\mathcal{F}}
\newcommand{\mcV}{\mathcal{V}}
\newcommand{\sfc}{\mathsf c}
\newcommand{\sfi}{\mathsf i}
\newcommand{\sfp}{\mathsf p}
\newcommand{\sft}{\mathsf t}
\newcommand{\sfC}{\mathsf C}
\newcommand{\sfD}{\mathsf D}
\newcommand{\sfT}{\mathsf T} 
\newcommand{\sfU}{\mathsf U}
\newcommand{\bbZ}{\mathbb Z} 
\newcommand{\bsa}{\boldsymbol{a}} 
\newcommand{\bsb}{\boldsymbol{b}}
\newcommand{\fa}{\mathfrak{a}} 
\newcommand{\fb}{\mathfrak{b}}
\newcommand{\fm}{\mathfrak{m}} 
\newcommand{\fp}{\mathfrak{p}}
\newcommand{\fq}{\mathfrak{q}}
\newcommand{\gam}{\varGamma}
\def\Si{\Sigma}
\def\one{\mathds 1}
\newcommand{\Tr}{\operatorname{Tr}\nolimits}
\title[Local duality]{Local duality for representations of \\ finite group schemes}
\author[Benson, Iyengar, Krause, and Pevtsova]{Dave Benson, Srikanth
  B. Iyengar, Henning Krause \\ and Julia Pevtsova}
\address{Dave Benson \\ 
Institute of Mathematics\\ 
University of Aberdeen\\ 
King's College\\ 
Aberdeen AB24 3UE\\ 
Scotland U.K.}
\address{Srikanth B. Iyengar\\ 
Department of Mathematics\\
University of Utah\\ 
Salt Lake City, UT 84112\\ 
U.S.A.}
\address{Henning Krause\\ 
Fakult\"at f\"ur Mathematik\\ 
Universit\"at Bielefeld\\ 
33501 Bielefeld\\ 
Germany.}
\address{Julia Pevtsova\\ 
Department of Mathematics\\ 
University of Washington\\ 
Seattle, WA 98195\\ 
U.S.A.}
\begin{document}

\begin{abstract} 
  A duality theorem for the stable module category of representations
  of a finite group scheme is proved. One of its consequences is an
  analogue of Serre duality, and the existence of Auslander-Reiten
  triangles for the $\fp$-local and $\fp$-torsion subcategories of the
  stable category, for each homogeneous prime ideal $\fp$ in the
  cohomology ring of the group scheme.
\end{abstract}

\keywords{Serre duality, local duality, finite group scheme, stable
module category, Auslander-Reiten triangle} \subjclass[2010]{16G10
(primary); 20C20, 20G10 20J06, 18E30}

\date{12 July 2017}

\thanks{SBI was partly supported by NSF grant DMS-1503044 and JP was
partly supported by NSF grants DMS-0953011 and DMS-1501146.}

\maketitle

\section{Introduction} 
This work concerns the modular representation theory of finite groups
and group schemes. A starting point for it is a duality theorem for
finite groups due to Tate, that appears already in Cartan and
Eilenberg~\cite{Cartan/Eilenberg:1956a}. For our purposes it is useful
to recast this theorem in terms of stable module categories. Recall
that the stable module category of a finite group scheme $G$ over a
field $k$ is the category obtained from the (abelian) category of
finite dimensional $G$-modules by annihilating morphisms that factor
through a projective module; we denote it $\stmod G$, and write
$\sHom_{G}(-,-)$ for the morphisms in this category. The category
$\stmod G$ is triangulated with suspension $\Omega^{-1}$, and Tate
duality translates to the statement that for all finite dimensional
$G$-modules $M$ and $N$ there are natural isomorphisms
\[ \Hom_{k}(\sHom_{G}(M,N),k) \cong
\sHom_{G}(N,\Omega\delta_{G}\otimes_{k}M)\,.
\] Here $\delta_{G}$ is the modular character of $G$, described in
Jantzen \cite[\S I.8.8]{Jantzen:2003a}; it is isomorphic to the
trivial representation $k$ when $G$ is a finite group.  This statement
can be deduced from a formula of Auslander and
Reiten~\cite{Auslander:1978a} that applies to general associative
algebras; see Theorem~\ref{thm:tate}.

In the language introduced by Bondal and
Kapranov~\cite{Bondal/Kapranov:1990a} the isomorphism above says that
$\stmod G$ has Serre duality with Serre functor $M\mapsto
\Omega\delta_{G}\otimes_{k}M$. One of the main results of our work is
that such a duality also holds \emph{locally}.

The precise statement involves a natural action of the cohomology ring
$H^{*}(G,k)$ of $G$ on the graded abelian group
\[ \sHom_{G}^{*}(M,N)=\bigoplus_{n\in\bbZ}\sHom_{G}(M,\Omega^{-n}N)\,.
\] The ring $H^{*}(G,k)$ is graded commutative, and also finitely
generated as a $k$-algebra, by a result of Friedlander and
Suslin~\cite{Friedlander/Suslin:1997a}. Fix a homogeneous prime ideal
$\fp$ not containing $H^{\ges 1}(G,k)$ and consider the triangulated
category $\tors_{\fp}(\stmod G)$ that is obtained from $\stmod G$ by
localising the graded morphisms at $\fp$ and then taking the full
subcategory of objects such that the graded endomorphisms are
$\fp$-torsion; see Section~\ref{sec:serre} for details. Our interest
in the subcategories $\tors_{\fp}(\stmod G)$ stems from the fact that
they are the building blocks of $\stmod G$ and play a key role in the
classification of its tensor ideal thick subcategories; see
\cite{Benson/Iyengar/Krause:2015a}. These subcategories may thus be
viewed as analogues of the $K(n)$-local spectra in stable homotopy
theory that give the chromatic filtration of a spectrum;
see~\cite{Ravenel:1992a}.

Our local Serre duality statement reads:

\begin{theorem}
  \label{thm:main1} Let $\sfC:=\tors_{\fp}(\stmod G)$ and $d$ the
Krull dimension of $H^*(G,k)/\fp$. The assignment $M\mapsto
\Omega^{d}\delta_G\otimes_k M$ induces a Serre functor for
$\sfC$. Thus for all $M,N$ in $\sfC$ there are natural isomorphisms
\[
\Hom_{H^*(G,k)}(\Hom^{*}_{\sfC}(M,N),I(\fp))\cong\Hom_{\sfC}(N,\Omega^{d}\delta_G\otimes_k
M)
\] where $I(\fp)$ is the injective hull of the graded
$H^*(G,k)$-module $H^*(G,k)/\fp$.
\end{theorem}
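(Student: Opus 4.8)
The strategy is to deduce the local statement (Theorem~\ref{thm:main1}) from the global Tate duality isomorphism
\[
\Hom_{k}(\sHom_{G}(M,N),k)\cong\sHom_{G}(N,\Omega\delta_{G}\otimes_{k}M)
\]
by applying the general machinery of local cohomology and local duality for graded-commutative Noetherian rings, in the spirit of Greenlees and of Benson/Iyengar/Krause. First I would upgrade the global duality to a statement about the graded $\Hom$: assembling the isomorphism over all shifts $\Omega^{-n}$ and using the $H^*(G,k)$-module structure, one obtains that $\sHom_{G}^{*}(M,N)$ and $\sHom_{G}^{*}(N,\Omega\delta_{G}\otimes_{k}M)$ are related by graded Matlis duality over $H^*:=H^*(G,k)$; concretely, the $k$-dual of the first is, up to a shift, the second. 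The point is that $\sHom_{G}^{*}(M,\omega\otimes_k M')$-type groups, for $M,M'$ finite-dimensional, are finitely generated over $H^*$, so Matlis duality behaves well.

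The core of the argument is then to pass from the injective hull of $k$ (i.e.\ $H^*$ viewed over itself with its maximal graded ideal, which is what the $k$-dual computes) to the injective hull $I(\fp)$ of $H^*/\fp$. This is exactly the content of graded local duality: for a finitely generated graded $H^*$-module $X$ one has
\[
\Hom_{H^*}(X, I(\fp)) \cong \Hom_{H^*}\bigl(\gamma_{\fp} X, I(\fp)\bigr)
\]
and the $\fp$-torsion/$\fp$-local functor $\gamma_{\fp}$ on the module side matches, via the $H^*$-action on morphisms in $\stmod G$, the subcategory construction $\gamma_{\fp}(\stmod G)=\sfC$; this compatibility is the part that must be set up carefully using Section~\ref{sec:serre}. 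Having identified $\Hom^{*}_{\sfC}(M,N)$ with the localization-then-torsion applied to $\sHom_{G}^{*}(M,N)$, and knowing the latter's Matlis dual from the previous step, one reads off
\[
\Hom_{H^*}(\Hom^{*}_{\sfC}(M,N), I(\fp)) \cong \Hom_{\sfC}(N, \Sigma^{?}\,\Omega\delta_{G}\otimes_k M)
\]
for some shift. Pinning down that the shift is exactly $\Omega^{d}$ with $d=\dim H^*/\fp$ comes from the local-duality spectral sequence / the fact that the dualizing complex of the local ring $(H^*)_{\fp}$ (or rather the appropriate quotient) is concentrated in homological degree $-d$, i.e.\ $I(\fp)$ sits in the $d$-th local cohomology $H^d_{\fp}$; this is where the Krull dimension enters. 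Finally, one checks naturality in $M$ and $N$, which is formal once the isomorphisms are constructed functorially, and then invokes the characterization of Serre functors (Bondal--Kapranov) to conclude that $M\mapsto\Omega^{d}\delta_G\otimes_k M$ is a Serre functor for $\sfC$.

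**Main obstacle.** The hard part will be the bookkeeping that identifies the triangulated-category construction $\gamma_{\fp}(\stmod G)$ with the module-theoretic $\gamma_{\fp}$ applied to the $\Hom^*$ groups, and in particular controlling finiteness: morphism groups in $\sfC$ are computed after localizing at $\fp$ and passing to the torsion part, so they need not be finitely generated over $H^*$, only over $(H^*)_{\fp}$ or as torsion modules, and one must verify that graded Matlis duality and local duality still apply in that setting (e.g.\ that $\gamma_{\fp}\sHom^*_G(M,N)$ is artinian over $(H^*)_{\fp}$ when $M,N$ are built from finite-dimensional modules). Equally delicate is tracking the degree shift so that the single suspension $\Omega\delta_G$ in Tate duality becomes $\Omega^{d}\delta_G$ after the local construction — this requires knowing that the $\fp$-local dualizing object acquires a shift by $d$, which is a statement about the local structure of $H^*$ at $\fp$ rather than about $G$.
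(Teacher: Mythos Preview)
Your strategy differs substantially from the paper's and has a genuine gap.

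The paper does not derive Theorem~\ref{thm:main1} from Tate duality by commutative-algebra local duality over $H^*(G,k)$. Instead it first establishes Theorem~\ref{thm:gorenstein}, the isomorphism $\gam_\fp(\delta_G)\cong\Omega^{-d}T_\fp(k)$ in $\StMod G$, and then deduces local Serre duality formally (Proposition~\ref{pr:gorenstein-serre} and Theorem~\ref{thm:stmodAR}). The proof of Theorem~\ref{thm:gorenstein} is a reduction to closed points: one passes to a purely transcendental extension $K/k$ for which $\fp$ lifts to a \emph{closed} point $\fm$ of $\Proj H^*(G_K,K)$ (Construction~\ref{con:generic}), applies Tate duality for $G_K$---where the relevant modules are genuinely finite-dimensional over $K$---and descends via Theorem~\ref{thm:realisability}. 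The shift $d$ arises as $1$ from Tate duality at the closed point plus $d-1$ from the Koszul object on the elements $b_1,\dots,b_{d-1}$.

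Your plan stays over $k$ and invokes graded local duality for the ring $H^*(G,k)$; this fails for two linked reasons. First, Tate duality (Theorem~\ref{thm:tate}) requires $M$ to be finite-dimensional, whereas the objects of $\tors_\fp(\stmod G)$ are infinite-dimensional when $\fp$ is not closed. Starting from finite-dimensional $M,N$ and then localising does not rescue this, since $\Hom_k(-,k)$ does not commute with localisation at $\fp$; the passage from the global isomorphism to the local one is not the formal manoeuvre you describe. Second, and decisively, your mechanism for producing the shift appeals to the dualising complex of $(H^*(G,k))_\fp$. Grothendieck local duality at $\fp$ involves $\dim H^*(G,k)_\fp=\height(\fp)$, not $\dim H^*(G,k)/\fp$, and in any case requires $H^*(G,k)$ to be Gorenstein (or at least Cohen--Macaulay with a well-behaved dualising module). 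In general $H^*(G,k)$ is neither. The Gorenstein behaviour underlying Theorem~\ref{thm:main1} is a property of the triangulated category $\StMod G$ (Corollary~\ref{cor:stmod-serre}), not of its cohomology ring, and cannot be read off from the $H^*(G,k)$-module structure of the graded Hom groups alone. This is precisely why the paper's field-extension argument is needed rather than a purely ring-theoretic one.
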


One of the corollaries is that $\tors_{\fp}(\stmod G)$ has
Auslander-Reiten triangles. One can thus bring to bear the techniques
of AR theory to the study of $G$-modules.  These results are contained
in Theorem~\ref{thm:stmodAR}.

We deduce Theorem~\ref{thm:main1} from a more general result
concerning $\StMod G$, the stable category of all (including infinite
dimensional) $G$-modules. Consider its subcategory $\gam_{\fp}(\StMod
G)$ consisting of the $\fp$-local $\fp$-torsion modules; in other
words, the $G$-modules whose support is contained in $\{\fp\}$. This
is a compactly generated triangulated category and the full
subcategory of compact objects is equivalent, up to direct summands,
to $\tors_{\fp}(\stmod G)$; this is explained in
Remark~\ref{rem:compacts-localisation}. There is an idempotent functor
$\gam_\fp\colon \StMod G\to \StMod G$ with image the $\fp$-local
$\fp$-torsion modules; see
Section~\ref{sec:cohomology-and-localisation} for details. The central
result of this work is that $\gam_\fp(\delta_G)$ is a dualising object
for $\gam_{\fp}(\StMod G)$, in the following sense.

\begin{theorem}
\label{thm:main2} For any $G$-module $M$ and $i\in\bbZ$ there is a
natural isomorphism
  \[ \tExt_{G}^{i}(M,\gam_\fp(\delta_G)) \cong
\Hom_{H^{*}(G,k)}(H^{*-d-i}(G,M),I(\fp)).
  \]
\end{theorem}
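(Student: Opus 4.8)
The plan is to identify $\tExt_G^i(M, \gam_\fp(\delta_G))$ with a suitable completion/localisation of ordinary Tate cohomology and then invoke graded local duality over the Noetherian graded ring $R := H^*(G,k)$. The starting point is Tate duality in the form recalled in the introduction (Theorem~\ref{thm:tate}), which for finite-dimensional modules gives a natural isomorphism
\[
\tExt_G^i(M,N) \cong \Hom_k(\tExt_G^{-i-1}(N, \delta_G \otimes_k M), k),
\]
equivalently $\Hom_k(\tExt_G^{-i}(N, \Omega\delta_G \otimes_k M), k)$ in the shift convention of the excerpt. Taking $N = k$ and reorganising, this says that $\tExt_G^*(k, \delta_G \otimes_k M)$ is graded $k$-dual to $\tExt_G^{-*}(M, k)$, which in turn (via the standard identification of Tate cohomology in high degrees with ordinary cohomology) packages the graded $R$-module $H^*(G,M)$. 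So the right-hand side of Theorem~\ref{thm:main2}, $\Hom_R(H^{*-d-i}(G,M), I(\fp))$, is — up to the shift by $d$ — a Matlis-type dual of $H^*(G,M)$ taken with respect to the graded injective hull $I(\fp)$ of $R/\fp$.

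The next step is the local duality theorem for the commutative graded ring $R = H^*(G,k)$: for a homogeneous prime $\fp$ with $\dim R/\fp = d$, the functor $M \mapsto \Hom_R(M, I(\fp))$ on graded $R$-modules computes, after $\fp$-localisation, the local cohomology $H^d_{\fp R_\fp}(-)$ dual, and in the derived category there is a natural isomorphism $\gam_\fp X \simeq \Sigma^{-d}\Hom_R(\Hom_R(X, I(\fp)), I(\fp))$-type statement; concretely, what I need is that $\Hom_R(-, I(\fp))$ is exact, that $I(\fp)$ is $\fp$-local and $\fp$-torsion, and that the double dual recovers the $\fp$-local $\fp$-torsion part. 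Combining this with the previous paragraph, $\Hom_R(H^{*-d-i}(G,M), I(\fp))$ becomes a shift of $\gam_\fp$ applied to (the dual of) $H^*(G,M)$, i.e. to $\tExt_G^*(k, \delta_G \otimes_k M)$.

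Now I transport this to the module category. The idempotent functor $\gam_\fp$ on $\StMod G$ is built (Section~\ref{sec:cohomology-and-localisation}) so that cohomology computes support: $\tExt_G^*(M, \gam_\fp N)$ is the $\fp$-local $\fp$-torsion part of $\tExt_G^*(M,N)$, at least when $N$ is rigid/dualisable or $M$ is compact, and more generally via the Koszul-object description of $\gam_\fp$. So $\tExt_G^i(M, \gam_\fp(\delta_G))$ is the $\fp$-local $\fp$-torsion piece of $\tExt_G^*(M, \delta_G)$ in degree $i$. Using Tate duality again to rewrite $\tExt_G^*(M,\delta_G)$ as $\Hom_k(\tExt_G^{-*-1}(\delta_G \otimes_k ?, \delta_G),k)$ — more cleanly, $\tExt_G^i(M,\delta_G) \cong \Hom_k(H^{-i-1+?}(G,M), k)$ after untwisting by $\delta_G^{-1}$ — and matching the degree shift, one lands exactly on $\Hom_R(H^{*-d-i}(G,M), I(\fp))$. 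The bookkeeping of the suspension/twist by $\Omega\delta_G$ versus $\Omega^d\delta_G$ is where the $d$ enters: $d$ copies of the shift come from the local cohomology dimension in the local duality isomorphism, one copy from Tate duality, and these must be reconciled with the normalisation of $I(\fp)$.

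The main obstacle, I expect, is the passage to infinite-dimensional modules: Tate duality as stated is for finite-dimensional $M, N$, whereas Theorem~\ref{thm:main2} is for arbitrary $G$-modules $M$, and $\gam_\fp(\delta_G)$ is genuinely infinite-dimensional. Handling this requires either (a) writing $M$ as a filtered colimit of finite-dimensional modules and checking that both sides convert the colimit into the appropriate limit — the left side because $\gam_\fp(\delta_G)$, being $\fp$-local $\fp$-torsion with finitely generated cohomology over $R$, should be "small" enough (e.g. $\tExt_G^*(-,\gam_\fp\delta_G)$ sends coproducts to products up to the relevant finiteness), and the right side because $H^*(G,-)$ commutes with filtered colimits and $\Hom_R(-,I(\fp))$ turns them into limits — or (b) proving the isomorphism first on compact objects of $\gam_\fp(\StMod G)$, where it reduces to the finite-dimensional Tate duality plus graded local duality, and then extending by a localising-subcategory / Brown-representability argument since both functors $M \mapsto \tExt_G^i(M,\gam_\fp\delta_G)$ and $M \mapsto \Hom_R(H^{*-d-i}(G,M),I(\fp))$ are cohomological and send coproducts to products. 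I would pursue route (b), as the compact case is essentially formal given the tools cited, and the extension is then a standard density argument in a compactly generated triangulated category.
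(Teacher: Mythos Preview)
Your proposal rests on a claim that does not hold: that ``$\tExt_G^i(M,\gam_\fp(\delta_G))$ is the $\fp$-local $\fp$-torsion piece of $\tExt_G^*(M,\delta_G)$ in degree $i$''. The idempotent $\gam_\fp$ on $\StMod G$ is a triangulated construction and is not computed degreewise on cohomology. Take for instance $G=\bbZ/2\times\bbZ/2$, so that $\delta_G=k$ and $H^*(G,k)=k[x,y]$, and let $\fp=(x)$, a closed point of $\Proj H^*(G,k)$. Your recipe yields the $\fp$-torsion submodule of $k[x,y]_{(x)}$, which is zero since that ring is a domain; you would then conclude $\gam_\fp k=0$, contradicting the fact that $\fp$ lies in the support of $\StMod G$. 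What is really happening is that $\sHom_G^*(k,\gam_\fp k)$ picks up the \emph{higher} local cohomology of $k[x,y]_{(x)}$ at its maximal ideal, not the degree-zero part. Your appeal to ``the local duality theorem for $R=H^*(G,k)$'' does not repair this: even granting a dualising complex for $R$, that duality concerns $R\Gamma_\fp$ on the derived category of $R$-modules, and you provide no mechanism identifying it with the effect of $\gam_\fp$ on $\StMod G$. In effect you are assuming that $H^*(G,k)$ controls $\StMod G$ as tightly as a Gorenstein ring controls its own derived category---but that is precisely the content of the theorem, not an available input.

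The paper's route is structurally different. It recasts the statement as a stable isomorphism $\gam_\fp(\delta_G)\cong\Omega^{-d}T_\fp(k)$, where $T_\fp(k)$ represents $M\mapsto\Hom_R(H^*(G,M),I(\fp))$ via Brown representability; both sides being $\fp$-local and $\fp$-torsion, Yoneda reduces one to testing against such $M$. For a \emph{closed} point $\fm$ (so $d=1$) the isomorphism follows directly from Tate duality combined with the description of $I(\fm)$ in Lemma~\ref{lem:hull}. For arbitrary $\fp$ the decisive step---absent from your sketch---is to pass to a purely transcendental extension $K/k$ and a closed point $\fm$ of $\Proj H^*(G_K,K)$ lying over $\fp$ (Construction~\ref{con:generic}), apply the closed-point case over $K$, and descend via Theorem~\ref{thm:realisability} and Lemma~\ref{lem:hull2}; the extra shift by $d-1$ enters through the Koszul object $\kos K{\bsb}$ of length $d-1$ used in that descent.
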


This result, proved in Section~\ref{sec:Duality}, may be compared with
Serre duality for coherent sheaves $\mcF$ on a non-singular projective
variety $X$ of dimension $n$:
\[ \Ext^i_X(\mcF,\omega_X) \cong \Hom_k(H^{n-i}(X,\mcF),k),
\] see, for example, Hartshorne \cite{Hartshorne:1977a}.

When $G$ is a finite group $\gam_{\fp}(k)$ is the Rickard idempotent
module $\kappa_{V}$, introduced by Benson, Carlson, and
Rickard~\cite{Benson/Carlson/Rickard:1996a}, that is associated to the
irreducible subvariety $V$ of $\Proj H^{*}(G,k)$ defined by $\fp$. In
this context, Theorem~\ref{thm:main2} was proved by Benson and
Greenlees~\cite{Benson/Greenlees:2008a}; see the paragraph following
Theorem~\ref{thm:gorenstein} below for a detailed comparison with
their work, and that of Benson~\cite{Benson:2008a}.

Concerning $\gam_{\fp}(k)$, the following consequences of
Theorem~\ref{thm:main2} have been anticipated in \cite{Benson:2001a}
when $G$ is a finite group.

\begin{theorem} 
  Suppose that $\delta_G\cong k$.  Then the $H^*(G,k)$-module
  $H^{*}(G,\gam_{\fp}(k))$ is injective and up to a twist isomorphic
  to $I(\fp)$. Also, there is an isomorphism of $k$-algebras
\[ 
\tExt_{G}^*(\gam_{\fp}(k),\gam_{\fp}(k)) \cong (H^{*}(G,k)_{\fp})^{\wedge}
\] where $(-)^{\wedge}$ denotes completion with respect to the
$\fp$-adic topology, and the $G$-module $\gam_{\fp}(k)$ is pure
injective.
\end{theorem}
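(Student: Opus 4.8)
The plan is to deduce all three assertions from Theorem~\ref{thm:main2}. Write $R:=H^{*}(G,k)$; since $\delta_G\cong k$ we have $\gam_\fp(\delta_G)=\gam_\fp(k)$, so that theorem provides, naturally in the $G$-module $M$, an isomorphism $\tExt^{i}_{G}(M,\gam_\fp(k))\cong\Hom_{R}(H^{*-d-i}(G,M),I(\fp))$. For the statement about $H^{*}(G,\gam_\fp(k))$ I would put $M=k$: since $R$ is free of rank one as a module over itself, each $H^{*-d-i}(G,k)$ is a shift of $R$ and the right-hand side is the corresponding shift of $I(\fp)$, so assembling over $i\in\bbZ$ yields an isomorphism of graded $R$-modules between $H^{*}(G,\gam_\fp(k))=\tExt^{*}_{G}(k,\gam_\fp(k))$ and $I(\fp)$ up to a shift in internal degree. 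In particular this module is injective.

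Next, for the endomorphism algebra, I would put $M=\gam_\fp(k)$ in Theorem~\ref{thm:main2} and feed in the previous step, so that $H^{*-d-i}(G,\gam_\fp(k))$ is again a shift of $I(\fp)$; summing the resulting isomorphisms over $i$ gives an isomorphism of graded $k$-modules $\tExt^{*}_{G}(\gam_\fp(k),\gam_\fp(k))\cong\End^{*}_{R}(I(\fp))$. A naturality argument, using that the isomorphism of Theorem~\ref{thm:main2} is natural in the first variable, should upgrade this to an isomorphism of graded $k$-algebras, composition of endomorphisms corresponding to composition of Matlis duals; here graded-commutativity of $R$ makes the possible discrepancy between an algebra and an anti-algebra isomorphism irrelevant. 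To finish I would compute $\End^{*}_{R}(I(\fp))$: as $I(\fp)$ is $\fp$-local every $R$-linear endomorphism is $R_\fp$-linear, so $\End^{*}_{R}(I(\fp))=\End^{*}_{R_\fp}(I(\fp))$, and over the Noetherian $*$-local ring $R_\fp$ the module $I(\fp)$ is the graded injective hull of the residue field, whose graded endomorphism ring is the completion $(R_\fp)^{\wedge}$ by graded Matlis duality. This gives $\tExt^{*}_{G}(\gam_\fp(k),\gam_\fp(k))\cong(R_\fp)^{\wedge}$ as $k$-algebras.

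For pure injectivity I would use the standard criterion that a module $N\in\StMod G$ is pure injective precisely when $\sHom_{G}(-,N)$ takes every pure-exact triangle (one becoming short exact after applying $\sHom_{G}(T,-)$ for each compact $T$) to a short exact sequence. By Theorem~\ref{thm:main2} the functor $\tExt^{*}_{G}(-,\gam_\fp(k))$ is, up to a degree shift, the composite of $H^{*}(G,-)=\sHom^{*}_{G}(k,-)$ with $\Hom_{R}(-,I(\fp))$. The first functor carries pure-exact triangles to short exact sequences of graded $R$-modules, being the sum over $n$ of the functors $\sHom_{G}(\Omega^{n}k,-)$ represented by compact objects; the second is exact because $I(\fp)$ is an injective $R$-module. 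Hence $\sHom^{*}_{G}(-,\gam_\fp(k))$ sends pure-exact triangles to short exact sequences, which is exactly the assertion that $\gam_\fp(k)$ is pure injective. (This argument does not really use $\delta_G\cong k$.)

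The substitutions and the manipulations with pure-exact triangles are routine; the real content, and the step I expect to be the main obstacle, is the middle one: turning the additive isomorphism furnished by Theorem~\ref{thm:main2} into a multiplicative one---the familiar bookkeeping needed to see that a Serre-type duality pairing is compatible with composition---together with the commutative-algebra identification of the graded endomorphism ring of $I(\fp)$ with the completion of $R_\fp$, where one should keep an eye on the graded-commutativity of $R$ and on characteristic two (though the requisite Matlis duality over $R$ is presumably already available from the earlier sections).
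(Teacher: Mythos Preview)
Your argument for the first two assertions is essentially the paper's: the relevant Corollary (following Theorem~\ref{thm:gorenstein}) plugs in $M=k$ and then $M=\gam_\fp(k)$, and finishes with the Matlis-duality identification $\End^*_R(I(\fp))\cong (R_\fp)^{\wedge}$. One point you elide but the paper handles via Remark~\ref{re:Tate-coh}: the identification $H^*(G,\gam_\fp(k))=\tExt^*_G(k,\gam_\fp(k))$ needs that ordinary and Tate cohomology agree here, which holds because $\gam_\fp(k)$ is $\fp$-local with $\fp\in\Proj H^*(G,k)$. Your caution about multiplicativity is apt; the paper's Corollary in fact only records an isomorphism of $H^*(G,k)$-modules, so the algebra-level bookkeeping you flag is not carried out there either. (The cleanest route to multiplicativity is to use Theorem~\ref{thm:gorenstein} as an isomorphism of \emph{objects} $\gam_\fp(\delta_G)\cong\Omega^{-d}T_\fp(k)$, which automatically induces a ring isomorphism on graded endomorphisms, and then check that the defining isomorphism for $T_\fp(k)$ is compatible with composition.)

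For pure injectivity your route differs from the paper's. The paper simply invokes \cite[Theorem~5.1]{Benson/Krause:2002a}, which shows every $T_\fp(C)$ is pure injective; your argument via the factorisation $\sHom^*_G(-,\gam_\fp(k))\cong\Hom_R(\sHom^*_G(k,-),I(\fp))$ through an exact functor on injectives is more self-contained and in effect reproves the relevant special case of that theorem. Both approaches work without the hypothesis $\delta_G\cong k$, as you observe.
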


Theorem~\ref{thm:main2} can be interpreted to mean that the category
$\StMod G$ is Gorenstein, for it is analogous to Grothendieck's result
that a commutative noetherian ring $A$ is Gorenstein if, and only if,
$\gam_{\fp}A$ is the injective hull of $A/\fp$, up to suspension, for
each $\fp$ in $\Spec A$. In Section~\ref{sec:gorenstein} we propose a
general notion of a Gorenstein triangulated category, in an attempt to
place these results in a common framework. There is a plethora of
duality results, and categorical frameworks dealing with them.  A
systematic comparison with those is beyond the scope of this work.

To prove Theorem~\ref{thm:main2} we use a technique from algebraic
geometry in the tradition of Zariski and Weil; namely, the
construction of generic points for algebraic varieties.  Given a point
$\fp\subseteq H^*(G,k)$, there is a purely transcendental extension
$K$ of $k$ and a closed point $\fm$ of $\Proj H^*(G_K,K)$ lying above
the point $\fp$ in $\Proj H^*(G,k)$. Here, $G_K$ denotes the group
scheme that is obtained from $G$ by extending the field to $K$.  The
crux is that \emph{one can choose} $\fm$ such that the following
statement holds.

\begin{theorem} 
Restriction of scalars induces a dense exact functor
\[ 
\stmod G_K\supseteq\tors_{\fm}(\stmod G_K)\longrightarrow \tors_{\fp}(\stmod G).
\]
\end{theorem}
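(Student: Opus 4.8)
\emph{Proof proposal.}
The plan is to build the functor from restriction and extension of scalars along the faithfully flat graded ring map $R:=H^{*}(G,k)\to H^{*}(G_{K},K)=R\otimes_{k}K=:R_{K}$, and then to check that it is well defined into $\tors_{\fp}(\stmod G)$, exact, and dense. Restriction of scalars $f_{*}\colon\StMod G_{K}\to\StMod G$ is exact and preserves coproducts --- it is induced by an exact functor of module categories carrying projectives to projectives --- and it has the exact left adjoint $f^{*}=-\otimes_{k}K$ (extension of scalars) and right adjoint $f^{!}=\Hom_{kG}(KG,-)$ (coinduction). The one elementary fact I would record at the outset, and which drives density, is that the unit $M\to f_{*}f^{*}M=M\otimes_{k}K$ is a \emph{split} monomorphism of $G$-modules: choose a $k$-basis of $K$ containing $1$ and project $G$-equivariantly onto that coordinate. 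Thus every $G$-module is a direct summand of the restriction of a $G_{K}$-module.

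Here ``$\fm$ lies above $\fp$'' is to be read as $\fm\cap R=\fp$, with $\fm$ a \emph{closed} point of $\Proj R_{K}$, produced by the generic-point construction: one intersects the subvariety $V(\fp)$ with generic linear forms whose algebraically independent coefficients generate the purely transcendental extension $K$, obtaining finitely many closed points of $\Proj R_{K}$ over $\fp$, one of which is $\fm$. Two consequences follow. By functoriality of support under restriction of scalars --- the image of $\supp_{G_{K}}X$ under $\pi\colon\Proj R_{K}\to\Proj R$ is $\supp_{G}f_{*}X$ --- the functor $f_{*}$ carries $G_{K}$-modules supported in $\{\fm\}$ to $G$-modules supported in $\{\fp\}$, so it restricts to an exact, coproduct-preserving functor $\bar f_{*}\colon\gam_{\fm}(\StMod G_{K})\to\gam_{\fp}(\StMod G)$. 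And, because $\fm$ is a closed point, there are finite-dimensional $G_{K}$-modules with support exactly $\{\fm\}$; every module of support $\subseteq\{\fm\}$ is automatically $\fm$-local and $\fm$-torsion, and, up to summands, $\tors_{\fm}(\stmod G_{K})$ consists of the $\gam_{\fm}$ of finite-dimensional $G_{K}$-modules and contains every such finite-dimensional module of support $\{\fm\}$ as a genuine object. Invoking the identification of $\tors_{\fp}(\stmod G)$ with the compact objects of $\gam_{\fp}(\StMod G)$ up to summands (Remark~\ref{rem:compacts-localisation}), the theorem reduces to two assertions about $\bar f_{*}$: that it sends compact objects to compact objects, and that it is dense on compacts.

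The first assertion is where the precise choice of $\fm$ is indispensable, and I expect it to be the main obstacle. As $\bar f_{*}$ preserves coproducts, it preserves compactness exactly when its right adjoint $\gam_{\fm}\comp f^{!}$ preserves coproducts; but $f^{!}$, being a product along the infinite extension $K/k$, badly fails to do so. The content is that torsion-localising at the closed point $\fm$ repairs this: the discrepancy between coinduction and extension of scalars along the purely transcendental extension $K/k$ is supported off $\fp$, so one should establish that $\gam_{\fm}\comp f^{!}$ agrees, up to suspension, with $\gam_{\fm}\comp f^{*}$ on $\gam_{\fp}(\StMod G)$ --- a local form of the triviality of the relative dualising module of the generic-point construction. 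Granting this, $\gam_{\fm}\comp f^{!}$ preserves coproducts because $\gam_{\fm}$ and $f^{*}=-\otimes_{k}K$ do; this yields the first assertion, and it identifies the right adjoint of $\bar f_{*}$, which feeds into density.

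For density, let $M=\gam_{\fp}\bar M$ with $\bar M$ finite-dimensional and $\fp\in\supp_{G}\bar M$; every compact object of $\gam_{\fp}(\StMod G)$ has this form up to a summand. By the split unit, $\bar M$ is a summand of $f_{*}f^{*}\bar M$, hence $M$ is a summand of $\gam_{\fp}(f_{*}f^{*}\bar M)$; by compatibility of the localisation functors with restriction of scalars this equals the image under $f_{*}$ of the part of $f^{*}\bar M$ supported on the fibre $\pi^{-1}(\fp)=\Proj(\kappa(\fp)\otimes_{k}K)$, which is irreducible because $K$ is purely transcendental and has $\fm$ among its closed points. A dévissage along this fibre --- using the local--global principle, the minimality of $\gam_{\fp}(\StMod G)$, tensoring $f^{*}\bar M$ with a finite-dimensional $G_{K}$-module $W$ of support $\{\fm\}$, and the projection formula $f_{*}(f^{*}\bar M\otimes_{K}W)\cong\bar M\otimes_{k}f_{*}W$ --- then peels off a summand of the form $f_{*}X$ with $X\in\tors_{\fm}(\stmod G_{K})$ that still contains $M$; here the freedom in choosing $\fm$ (and $W$) among the finitely many candidates from the generic-point construction is what makes the bookkeeping close up. Exactness of the resulting functor $\tors_{\fm}(\stmod G_{K})\to\tors_{\fp}(\stmod G)$ is inherited from $f_{*}$, which completes the proof.
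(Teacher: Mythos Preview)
Your outline has the right architecture---restriction lands in the correct subcategory by support functoriality, and the content is in compactness and density---but the two steps you yourself flag as obstacles are precisely where the proof lives, and neither is carried out.

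For compactness, you propose to show that $\gam_{\fm}\comp f^{!}$ agrees with $\gam_{\fm}\comp f^{*}$ up to suspension on $\gam_{\fp}(\StMod G)$, calling this ``the main obstacle'' and leaving it open. The paper does not attempt any such comparison. Instead it checks compactness on a \emph{generator}: with the explicit elements $\bsb$ from Construction~\ref{con:generic} and $\fq=\fp'+(\bsb)$, one has $\gam_{\fm}(\StMod G_{K})^{c}=\Thick(\kos{S_{K}}{\fq})$, and Theorem~\ref{thm:realisability} computes directly that $(\kos{S_{K}}{\fq})\da_{G}\cong(\kos{S}{\fp})_{\fp}$, which generates $\gam_{\fp}(\StMod G)^{c}$. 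That settles compactness without ever touching $f^{!}$.

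For density, your split-unit argument and the subsequent d\'evissage only exhibit a given $M\in\gam_{\fp}(\StMod G)^{c}$ as a \emph{summand} of some $f_{*}X$; you never upgrade this to $M\cong f_{*}Y$ for a specific $Y$, and there is no reason the splitting of $f_{*}X$ should lift to $G_{K}$. The paper produces an explicit preimage: with $\lambda(M)=M_{K}\otimes_{K}\kos{K}{\bsb}$, Theorem~\ref{thm:realisability} gives $(\lambda M)\da_{G}\cong M$ for every $\fp$-local $\fp$-torsion $M$, which is density on the nose. Since $\kos{K}{\bsb}$ is finite dimensional, the right adjoint $\rho=\Hom_{K}(\kos{K}{\bsb},-)\da_{G}$ of $\lambda$ preserves coproducts, so $\lambda$ preserves compactness, and density on compacts follows from the same isomorphism.

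The moral is that the Koszul object $\kos{K}{\bsb}$ built into the construction of $\fm$ is not decoration: tensoring with it furnishes a concrete one-sided inverse to restriction, and this single device delivers both compactness and density simultaneously. Your abstract adjunction framework would need something of equivalent strength to close the gaps.
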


This result is proved in Section~\ref{sec:closed-points}, building on
our work in \cite{Benson/Iyengar/Krause/Pevtsova:2015b}. It gives a
remarkable description of the compact objects in
$\gam_\fp(\StMod{G})$: they are obtained from the finite dimensional
objects in $\gam_\fm(\StMod G_K)$ by restriction of scalars. This
allows one to reduce the proof of Theorem~\ref{thm:main2} to the case
of a closed point, where it is essentially equivalent to classical
Tate duality.  The theorem above has other consequences; for example,
it implies that the compact objects in $\gam_{\fp}(\StMod G)$ are
endofinite $G$-modules in the sense of
Crawley-Boevey~\cite{Crawley-Boevey:1992a}; see
Section~\ref{sec:closed-points}.

\section{Cohomology and localisation}
\label{sec:cohomology-and-localisation} In this section we recall
basic notions concerning certain localisation functors on triangulated
categories with ring actions. The material is needed to state and
prove the results in this work. The main triangulated category of
interest is the stable module category of a finite group scheme, but
the general framework is needed in Sections~\ref{sec:gorenstein} and
\ref{sec:serre}.  Primary references for the material presented here
are \cite{Benson/Iyengar/Krause:2008a, Benson/Iyengar/Krause:2011a};
see \cite{Benson/Iyengar/Krause/Pevtsova:2015b} for the special case
of the stable module category.

\subsection*{Triangulated categories with central action} 
Let $\sfT$ be a triangulated category with suspension $\Si$. Given
objects $X$ and $Y$ in $\sfT$, consider the graded abelian groups
\[ \Hom_\sfT^*(X,Y)=\bigoplus_{i\in\bbZ}\Hom_\sfT(X,\Si^i Y) \quad
\text{and}\quad
\End_{\sfT}^{*}(X)= \Hom_{\sfT}^{*}(X,X)\,.
\] Composition makes $\End_{\sfT}^{*}(X)$ a graded ring and
$\Hom_{\sfT}^{*}(X,Y)$ a left-$\End_{\sfT}^{*}(Y)$
right-$\End_{\sfT}^{*}(X)$ module.

Let $R$ be a graded commutative noetherian ring.  In what follows we
will only be concerned with homogeneous elements and ideals in $R$. In
this spirit, `localisation' will mean homogeneous localisation, and
$\Spec R$ will denote the set of homogeneous prime ideals in $R$.

We say that a triangulated category $\sfT$ is \emph{$R$-linear} if for
each $X$ in $\sfT$ there is a homomorphism of graded rings
$\phi_X\colon R\to \End_{\sfT}^{*}(X)$ such that the induced left and
right actions of $R$ on $\Hom_{\sfT}^{*}(X,Y)$ are compatible in the
following sense: For any $r\in R$ and $\alpha\in\Hom^*_\sfT(X,Y)$, one
has
\[ \phi_Y(r)\alpha=(-1)^{|r||\alpha|}\alpha\phi_X(r)\,.
\]

An exact functor $F\colon \sfT\to\sfU$ between $R$-linear triangulated
categories is \emph{$R$-linear} if the induced map
\[ \Hom_\sfT^*(X,Y)\lra \Hom_\sfU^*(FX,FY)
\] of graded abelian groups is $R$-linear for all objects $X,Y$ in
$\sfT$.
 
In what follows, we fix a compactly generated $R$-linear triangulated
category $\sfT$ and write $\sfT^c$ for its full subcategory of compact
objects.

\subsection*{Localisation} 
Fix an ideal $\fa$ in $R$. An $R$-module $M$ is \emph{$\fa$-torsion}
if $M_\fq=0$ for all $\fq$ in $\Spec R$ with $\fa\not\subseteq
\fq$. Analogously, an object $X$ in $\sfT$ is \emph{$\fa$-torsion} if
the $R$-module $\Hom_\sfT^*(C,X)$ is $\fa$-torsion for all
$C\in\sfT^c$.  The full subcategory of $\fa$-torsion objects
\[ \gam_{\mcV(\fa)}\sfT:=\{X\in\sfT\mid X \text{ is $\fa$-torsion} \}
\] is localising and the inclusion $\gam_{\mcV(\fa)}\sfT\subseteq
\sfT$ admits a right adjoint, denoted $\gam_{\mcV(\fa)}$.

Fix a $\fp$ in $\Spec R$.  An $R$-module $M$ is \emph{$\fp$-local} if
the localisation map $M\to M_\fp$ is invertible, and an object $X$ in
$\sfT$ is \emph{$\fp$-local} if the $R$-module $\Hom_\sfT^*(C,X)$ is
$\fp$-local for all $C\in\sfT^c$.  Consider the full subcategory of
$\sfT$ of $\fp$-local objects
\[ \sfT_\fp:=\{X\in\sfT\mid X \text{ is $\fp$-local}\}
\] and the full subcategory of $\fp$-local and $\fp$-torsion objects
\[ \gam_\fp\sfT:=\{X\in\sfT\mid X \text{ is $\fp$-local and
$\fp$-torsion} \}.
\] Note that $\gam_\fp\sfT\subseteq\sfT_\fp\subseteq\sfT$ are
localising subcategories.  The inclusion $\sfT_\fp\to\sfT$ admits a
left adjoint $X\mapsto X_\fp$ while the inclusion
$\gam_\fp\sfT\to\sfT_\fp$ admits a right adjoint. We denote by
$\gam_\fp\colon\sfT\to\gam_\fp\sfT$ the composition of those adjoints;
it is the \emph{local cohomology functor} with respect to $\fp$; see
\cite{Benson/Iyengar/Krause:2008a, Benson/Iyengar/Krause:2011a} for
the construction of this functor.

The following observation is clear.

\begin{lemma}
\label{le:periodicity} For any element $r$ in $R\setminus \fp$, say of
degree $n$, and $\fp$-local object $X$, the natural map $X\xra{r}\Si^n
X$ is an isomorphism. \qed
\end{lemma}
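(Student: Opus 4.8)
The plan is to unwind the definition of $\fp$-local object and reduce the claim to an elementary statement about $\fp$-local graded $R$-modules. First I would fix $r \in R \setminus \fp$ of degree $n$ and a $\fp$-local object $X$ in $\sfT$, and consider the map $X \xra{r} \Si^n X$, which by definition is $\phi_X(r)$. To show this is an isomorphism in the triangulated category $\sfT$, it suffices to show that for every compact object $C \in \sfT^c$ the induced map on graded Hom-groups
\[
\Hom_\sfT^*(C, X) \xra{r} \Hom_\sfT^*(C, \Si^n X) = \Si^n\Hom_\sfT^*(C,X)
\]
is an isomorphism of graded abelian groups; indeed, if a morphism in a triangulated category induces isomorphisms on $\Hom_\sfT^*(C,-)$ for all $C$ in a generating set of compacts, its cone is both zero (being right-orthogonal to all compacts, hence to all of $\sfT$) and the map is invertible. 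By the $R$-linearity compatibility axiom, the displayed map is, up to the sign $(-1)^{|r||\alpha|}$ which does not affect bijectivity, multiplication by the element $r \in R$ on the graded $R$-module $\Hom_\sfT^*(C,X)$.

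Now the key point: since $X$ is $\fp$-local, $\Hom_\sfT^*(C,X)$ is by definition a $\fp$-local $R$-module, i.e.\ the localisation map $\Hom_\sfT^*(C,X) \to \Hom_\sfT^*(C,X)_\fp$ is an isomorphism. In the localisation $M_\fp$ of any graded $R$-module $M$ at $\fp$, every homogeneous element of $R \setminus \fp$ acts invertibly, essentially by construction of homogeneous localisation. Hence $r$ acts invertibly on $\Hom_\sfT^*(C,X)_\fp \cong \Hom_\sfT^*(C,X)$, which is exactly what we needed.

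This is genuinely routine, so I do not anticipate a real obstacle; the only points requiring a modicum of care are (i) invoking the standard fact that a morphism inducing isomorphisms on $\Hom_\sfT^*(C,-)$ for all compact generators $C$ is itself an isomorphism (its cone lies in the kernel of all these functors, which is zero since $\sfT$ is compactly generated), and (ii) keeping track of the Koszul sign from the $R$-linearity axiom, which is harmless for the question of invertibility. Hence the lemma is "clear" as stated, and the argument above merely records why.
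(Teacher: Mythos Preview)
Your argument is correct and is exactly the routine verification the paper has in mind; the paper gives no proof at all (the statement is followed immediately by \qed), so your unwinding of the definition of $\fp$-local together with the standard compact-generation argument is precisely what fills in the omitted details.
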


The functor $\gam_{\mcV(\fa)}$ commutes with exact functors preserving
coproducts.

\begin{lemma}
\label{le:gamma-commute} Let $F\colon\sfT\to \sfU$ be an exact functor
between $R$-linear compactly generated triangulated categories such
that $F$ is $R$-linear and preserves coproducts. Suppose that the
action of $R$ on $\sfU$ factors through a homomorphism $f\colon R\to
S$ of graded commutative rings. For any ideal $\fa$ of $R$ there is a
natural isomorphism
\[ F\comp\gam_{\mcV(\fa)}\cong \gam_{\mcV(\fa S)} \comp F
\] of functors $\sfT\to\sfU$, where $\fa S$ denotes the ideal of $S$
that is generated by $f(\fa)$.
\end{lemma}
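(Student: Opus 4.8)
The plan is to reduce to the case of a principal ideal and there to realise $F$ as commuting with the explicit telescope construction of the localisation functor. Since $R$ is noetherian we may write $\fa=(r_1,\dots,r_c)$ with the $r_i$ homogeneous, so that $\mcV(\fa)=\bigcap_i\mcV((r_i))$ and hence, by the standard identity $\gam_{\mcW}\comp\gam_{\mcW'}\cong\gam_{\mcW\cap\mcW'}$ for specialisation closed subsets (see \cite{Benson/Iyengar/Krause:2008a}), one has $\gam_{\mcV(\fa)}\cong\gam_{\mcV((r_1))}\comp\cdots\comp\gam_{\mcV((r_c))}$; likewise $\fa S=(f(r_1),\dots,f(r_c))$ and $\gam_{\mcV(\fa S)}\cong\gam_{\mcV((f(r_1)))}\comp\cdots\comp\gam_{\mcV((f(r_c)))}$ on $\sfU$. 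It therefore suffices to treat $\fa=(r)$ for a single homogeneous $r$, say of degree $n$, as composing the resulting isomorphisms over $r_1,\dots,r_c$ then yields the general statement.

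So assume $\fa=(r)$. Recall that there is a functorial localisation triangle $\gam_{\mcV((r))}X\to X\to L_rX\to\Si\gam_{\mcV((r))}X$ in which $L_rX$ is the homotopy colimit of the telescope
\[ X\xra{r}\Si^{n}X\xra{\Si^{n}r}\Si^{2n}X\xra{}\cdots \]
and $X\to L_rX$ is the canonical map into this colimit; this description, together with the fact that $L_rX$ is $\mcV((r))$-local, is standard, cf. \cite{Benson/Iyengar/Krause:2008a}. Now apply $F$. Being exact, $F$ carries this triangle to a triangle $F\gam_{\mcV((r))}X\to FX\to F(L_rX)\to\Si F\gam_{\mcV((r))}X$ in $\sfU$. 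Being coproduct preserving, $F$ commutes with homotopy colimits of sequences (each such colimit is the cone of $\id-\text{shift}$ on a countable coproduct), so $F(L_rX)$ is the homotopy colimit of $FX\xra{F(r)}\Si^{n}FX\xra{}\cdots$ with $F$ applied to $X\to L_rX$ the canonical map into it; and since $F$ is $R$-linear and the $R$-action on $\sfU$ factors through $f\colon R\to S$, the map $F(r)\colon FX\to\Si^{n}FX$ is exactly multiplication by $f(r)\in S$. Hence $F(L_rX)$ is precisely the telescope computing $L_{\mcV((f(r)))}FX=L_{\mcV((r)S)}FX$, and $FX\to F(L_rX)$ is the corresponding localisation map. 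Thus the triangle above is (isomorphic to) the localisation triangle of $FX$ with respect to $\mcV((r)S)$, which yields a natural isomorphism $F\gam_{\mcV((r))}X\cong\gam_{\mcV((r)S)}FX$; naturality in $X$ is immediate, every identification used being natural.

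The only real point requiring care is the passage through the telescope: one must invoke that $L_{\mcV((r))}$ admits this homotopy colimit presentation with $X\to L_rX$ the localisation map, and then observe that $F$—exact and coproduct preserving—transports both the homotopy colimit and that map along the identification of multiplication by $r$ on $FX$ with multiplication by $f(r)$. The rest, namely the reduction to a single generator via $\gam_{\mcV(\fa)}\cong\gam_{\mcV((r_1))}\comp\cdots\comp\gam_{\mcV((r_c))}$ (and the analogous factorisation of $\gam_{\mcV(\fa S)}$), is purely formal.
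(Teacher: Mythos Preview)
Your proof is correct and follows essentially the same route as the paper's: the paper's one-line proof refers to \cite[Proposition~2.9]{Benson/Iyengar/Krause:2011a} for an explicit description of $\gam_{\mcV(\fa)}$ via homotopy colimits, and your argument spells out exactly that description---reducing to a single generator, using the telescope construction of $L_r$, and noting that an exact coproduct-preserving $R$-linear functor carries it to the corresponding telescope on the target. The only extra care you take, namely the factorisation $\gam_{\mcV(\fa)}\cong\gam_{\mcV((r_1))}\comp\cdots\comp\gam_{\mcV((r_c))}$, is implicit in the cited proposition.
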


\begin{proof} 
  The statement follows from an explicit description of
  $\gam_{\mcV(\fa)}$ in terms of homotopy colimits; see
  \cite[Proposition~2.9]{Benson/Iyengar/Krause:2011a}.
\end{proof}

\subsection*{Injective cohomology objects}

Given an object $C$ in $\sfT^{\sfc}$ and an injective $R$-module $I$,
Brown representability yields an object $T(C,I)$ in $\sfT$ such that
\begin{equation}\label{eq:inj-coh} \Hom_R(\Hom^*_\sfT(C,-),
I)\cong\Hom_\sfT(-,T(C,I))\,.
\end{equation} This yields a functor
\[ T\colon \sfT^{\sfc}\times\Inj R\lra\sfT.
\] For each $\fp$ in $\Spec R$, we write $I(\fp)$ for the injective
hull of $R/\fp$ and set
\[ T_\fp:=T(-,I(\fp))\,,
\] viewed as a functor $\sfT^{\sfc}\to \sfT$.

\subsection*{Tensor triangulated categories}

Let now $\sfT=(\sfT,\otimes,\one)$ be a tensor triangulated category
such that $R$ acts on $\sfT$ via a homomorphism of graded rings
$R\to \End_{\sfT}^{*}(\one)$. It is easy to verify that in this case
the functors $\gam_\fp$ and $T_\fp$ are tensor functors:
\begin{equation}
\label{eq:tensor-identity} \gam_\fp\cong \gam_\fp(\one)\otimes
-\qquad\text{and}\qquad T_\fp\cong T_\fp(\one)\otimes -
\end{equation}

Now we turn to modules over finite group schemes.  We follow the
notation and terminology from
\cite{Benson/Iyengar/Krause/Pevtsova:2015b}.

\subsection*{The stable module category} 

Let $G$ be a finite group scheme over a field $k$ of positive
characteristic.  The coordinate ring and the group algebra of $G$ are
denoted $k[G]$ and $kG$, respectively. These are finite dimensional
Hopf algebras over $k$ that are dual to each other. We write $\Mod G$
for the category of $G$-modules and $\mod G$ for its full subcategory
consisting of finite dimensional $G$-modules. We often identity
$\Mod G$ with the category of $kG$-modules, which is justified by
\cite[I.8.6]{Jantzen:2003a}.

We write $H^*(G,k)$ for the cohomology algebra, $\Ext^{*}_{G}(k,k)$,
of $G$. This is a graded commutative $k$-algebra, because $kG$ is a
Hopf algebra, and acts on $\Ext^{*}_{G}(M,N)$, for any $G$-modules
$M,N$. Moreover, the $k$-algebra $H^{*}(G,k)$ is finitely generated,
and, when $M,N$ are finite dimensional, $\Ext^{*}_{G}(M,N)$ is
finitely generated over it; this is by a theorem of Friedlander and
Suslin~\cite{Friedlander/Suslin:1997a}.

The stable module category $\StMod G$ is obtained from $\Mod G$ by
identifying two morphisms between $G$-modules when they factor through
a projective $G$-module.  An isomorphism in $\StMod G$ will be called
a \emph{stable isomorphism}, to distinguish it from an isomorphism in
$\Mod G$. In the same vein, $G$-modules $M$ and $N$ are said to be
\emph{stably isomorphic} if they are isomorphic in $\StMod G$; this is
equivalent to the condition that $M$ are $N$ are isomorphic in $\Mod
G$, up to projective summands.

The tensor product over $k$ of $G$-modules passes to $\StMod G$ and
yields a tensor triangulated category with unit $k$ and suspension
$\Omega^{-1}$, the inverse of the syzygy functor. The category $\StMod
G$ is compactly generated and the subcategory of compact objects
identifies with $\stmod G$, the stable module category of finite
dimensional $G$-modules. See Carlson \cite[\S5]{Carlson:1996a} and
Happel~\cite[Chapter I]{Happel:1988a} for details.

We use the notation $\sHom_G(M,N)$ for the $\Hom$-sets in $\StMod G$.
The cohomology algebra $H^{*}(G,k)$ acts on $\StMod G$ via a
homomorphism of $k$-algebras
\[ -\otimes_k M\colon H^*(G,k)=\Ext_G^*(k,k)\lra\sHom^{*}_{G}(M,M)\,.
\] Thus, the preceding discussion on localisation functors on
triangulated categories applies to the $H^{*}(G,k)$-linear category
$\StMod G$.

\subsection*{Koszul objects} 

An element $b$ in $H^{d}(G,k)$ corresponds to a morphism
$k\to \Omega^{-d}k$ in $\StMod G$; let $\kos k{b}$ denote its mapping
cone. This gives a morphism $k\to \Omega^d(\kos k{b})$. For a sequence
of elements $\bsb:=b_{1},\dots,b_{n}$ in $H^{*}(G,k)$ and a $G$-module
$M$, we set
\[ \kos k{\bsb}:=(\kos k{b_1})\otimes_k\cdots\otimes_k (\kos k{b_n})
\qquad\text{and}\qquad \kos M{\bsb}:=M\otimes_k \kos k{\bsb}.
\] It is easy to check that for a $G$-module $N$ and
$s=\sum_{i}|b_{i}|$, there is an isomorphism
\begin{equation}
\label{eq:kos-swap} \sHom_{G}(M,\kos N{\bsb}) \cong
\sHom_{G}(\Omega^{n+s}\kos M{\bsb},N).
\end{equation} 
Let $\fb=(\bsb)$ denote the ideal of $H^{*}(G,k)$ generated by
$\bsb$. By abuse of notation we set $\kos M{\fb}:= \kos M {\bsb}$. If
$\bsb'$ is a finite set of elements in $H^{*}(G,k)$ such that
$\sqrt{(\bsb')}=\sqrt{(\bsb)}$, then, by
\cite[Proposition~3.10]{Benson/Iyengar/Krause:2015a}, for any $M$ in
$\StMod G$ there is an equality
\begin{equation}
\label{eq:kos-thick} \Thick (\kos M{\bsb})=\Thick (\kos M{\bsb'})\,.
\end{equation}

Fix $\fp$ in $\Spec H^{*}(G,k)$. We will repeatedly use the fact that
$\gam_\fp(\StMod G)^c$ is generated as a triangulated category by the
family of objects $(\kos M{\fp})_\fp$ with $M$ in $\stmod G$; see
\cite[Proposition~3.9]{Benson/Iyengar/Krause:2011a}. In fact, if $S$
denotes the direct sum of a representative set of simple $G$-modules,
then there is an equality
\begin{equation}
\label{eq:generators} \gam_\fp(\StMod G)^c=\Thick((\kos
S{\fp})_\fp)\,.
\end{equation} 
It turns out that one has $\gam_{\fm}(\StMod G)=\{0\}$ where $\fm$
denotes $H^{\ges 1}(G,k)$, the ideal of elements of positive degree;
see Lemma~\ref{le:not-m} below. For this reason, it is customary to
focus on $\Proj H^{*}(G,k)$, the set of homogeneous prime ideals not
containing $\fm$, when dealing with $\StMod G$.

\subsection*{Tate cohomology} 

By construction, the action of $H^{*}(G,k)$ on $\StMod G$ factors
through $\sHom^{*}_{G}(k,k)$, the graded ring of endomorphisms of the
identity. The latter ring is not noetherian in general, which is one
reason to work with $H^{*}(G,k)$. In any case, there is little
difference, vis a vis their action on $\StMod G$, as the next remarks
should make clear.

\begin{remark}
\label{re:Tate-coh} Let $M$ and $N$ be $G$-modules. The map
$\Hom_{G}(M,N)\to \sHom_{G}(M,N)$ induces a map
$\Ext^{*}_{G}(M,N)\to\sHom^{*}_{G}(M,N)$ of $H^*(G,k)$-modules. This
map is surjective in degree zero, with kernel $\PHom_{G}(M,N)$, the
maps from $M$ to $N$ that factor through a projective $G$-module. It
is bijective in positive degrees and hence one gets an exact sequence
of graded $H^{*}(G,k)$-modules
\begin{equation}
\label{eq:Tate-coh} 0\lra \PHom_{G}(M,N) \lra \Ext^{*}_{G}(M,N)\lra
\sHom^{*}_{G}(M,N) \lra X\lra 0
\end{equation} 
with $X^{i}=0$ for $i\ge 0$. For degree reasons, the
$H^{*}(G,k)$-modules $\PHom_{G}(M,N)$ and $X$ are
$\fm$-torsion. Consequently, for $\fp$ in $\Proj H^{*}(G,k)$ the
induced localised map is an isomorphism:
\begin{equation}
\label{eq:Tate-local} \Ext^{*}_{G}(M,N)_{\fp}\lra
\sHom^{*}_{G}(M,N)_{\fp}\,.
\end{equation} 
More generally, for each $r$ in $\fm$ localisation induces an
isomorphism
\[ \Ext^{*}_{G}(M,N)_{r}\xra{\ \cong\ } \sHom^{*}_{G}(M,N)_{r}
\] of $H^{*}(G,k)_{r}$-modules. This means that $\Proj H^{*}(G,k)$ has
a finite cover by affine open sets on which ordinary cohomology and
stable cohomology agree.
\end{remark}

Given the finite generation result due to Friedlander and Suslin
mentioned earlier, the next remark can be deduced from the exact
sequence \eqref{eq:Tate-coh}.

\begin{remark}
\label{re:fin-gen} When $M,N$ are finite dimensional $G$-modules,
$\sHom_{G}^{\ges s}(M,N)$ is a finitely generated $H^{*}(G,k)$-module
for any $s\in\bbZ$. Moreover the $H^{*}(G,k)_{\fp}$-module
\[ \sHom_{G}^{*}(M_{\fp},N_{\fp})\cong \sHom_{G}^{*}(M,N)_{\fp}
\] is finitely generated for each $\fp$ in $\Proj H^{*}(G,k)$.
\end{remark}

\begin{lemma}
\label{le:not-m} One has $\gam_{\fm}(\StMod G)=\{0\}$, where
$\fm=H^{\ges 1}(G,k)$.
\end{lemma}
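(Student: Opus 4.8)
The plan is to show $\gam_{\fm}(\StMod G) = \{0\}$ by arguing that no nonzero object can be simultaneously $\fm$-local and $\fm$-torsion, and then invoke the fact that $\gam_{\fm}(\StMod G)$ consists precisely of such objects. Recall $\gam_\fm\sfT = \gam_\fm\sfT \subseteq \sfT_\fm \cap \gam_{\mcV(\fm)}\sfT$; concretely, an object $X$ lies in $\gam_{\fm}(\StMod G)$ only if $\sHom^*_G(C,X)$ is both $\fm$-local and $\fm$-torsion as an $H^*(G,k)$-module, for every compact $C$. So it suffices to prove that the only $H^*(G,k)$-module that is both $\fm$-local and $\fm$-torsion is the zero module, and then conclude $\sHom^*_G(C,X) = 0$ for all $C \in \stmod G$; since $\stmod G$ generates $\StMod G$, this forces $X = 0$.

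The module-theoretic core is elementary. Let $N$ be a graded $H^*(G,k)$-module that is $\fm$-torsion and $\fm$-local. Being $\fm$-local means the localisation map $N \to N_\fm$ is an isomorphism. Being $\fm$-torsion means $N_\fq = 0$ for every $\fq \in \Spec H^*(G,k)$ with $\fm \not\subseteq \fq$. I would combine these at the prime $\fq = \fm$ itself, but that requires care: one needs a prime $\fq$ that is both $\not\supseteq \fm$ (so torsion kills $N_\fq$) and such that $N_\fq$ detects $N$ via $\fm$-locality. The clean way is: since $N$ is $\fm$-local, for every $\fp \in \Proj H^*(G,k)$ (i.e. $\fp \not\supseteq \fm$) the module $N_\fp$ is a localisation of $N = N_\fm$, and by $\fm$-torsion $N_\fp = 0$; on the other hand, an $\fm$-local module $N$ with $N_\fp = 0$ for all $\fp \in \Proj H^*(G,k)$ must vanish, because the localisation $N \to N_\fm$ can be computed as the localisation at the multiplicative set of homogeneous elements outside $\fm$, and $N_\fm$ is then seen to be supported only on primes inside $\mcV(\fm) = \{\fm\}$ — yet $\fm$-torsion of $N$ says $N_\fm$ is a localisation at a set meeting $\fm$... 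Let me instead use the direct homotopy-colimit description: $\gam_{\mcV(\fm)} = \gam_{\mcV(\bsb)}$ for a generating set $\bsb$ of $\fm$ up to radical, and $\fm$-local means inverting all of $\fm$; Koszul objects $\kos k\bsb$ localised away from $\fm$ are zero, so $\gam_\fm(k) = (\gam_{\mcV(\fm)}k)_\fm = 0$ by \eqref{eq:tensor-identity} combined with \eqref{eq:kos-thick} and Lemma~\ref{le:periodicity}.

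Concretely, here is the argument I would write: by \eqref{eq:tensor-identity}, $\gam_\fm \cong \gam_\fm(k) \otimes_k -$, so it suffices to show $\gam_\fm(k) = 0$ in $\StMod G$. Now $\gam_\fm(k)$ is $\fm$-local, so by Lemma~\ref{le:periodicity} the map $\gam_\fm(k) \xra{r} \Si^{|r|}\gam_\fm(k)$ is an isomorphism for every $r \in H^*(G,k) \setminus \fm$. But $\gam_\fm(k) = (\gam_{\mcV(\fm)}(k))_\fm$ is also $\fm$-torsion, hence built (as a localising subcategory element, in fact as a compact object up to the description \eqref{eq:generators}) from the Koszul object $\kos k\fm$ localised at $\fm$. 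Taking $\bsb = b_1,\dots,b_n$ a set of homogeneous generators of $\fm$, each $\kos k{b_i}$ fits in a triangle $k \xra{b_i} \Omega^{-|b_i|}k \to \kos k{b_i}$, and inverting $b_i$ (which lies in $\fm$, hence is inverted in no contradiction — wait, we invert elements \emph{outside} $\fm$) — the correct statement is that after localising at $\fm$ each such $b_i$ still lies in the maximal ideal of $H^*(G,k)_\fm$ and acts, on an $\fm$-torsion module, with the property that some power annihilates every element; combined with $\fm$-locality forcing $b_i$ (outside complements) to be invertible, one gets that $\gam_\fm(k)$ is annihilated by a unit, hence is zero. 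The main obstacle is bookkeeping the two conditions ($\fm$-local versus $\fm$-torsion) against each other cleanly; the slick route is surely to cite the general principle that for $\sfa = \fp$ one has $\gam_\fp\sfT = 0$ whenever $\fp = \fm$ is not in $\Proj$, which in the abstract setting follows because a module over $R$ that is $\fm$-local and $\fm$-torsion has support contained in $\{\fm\}$ while being a module over $R_\fm$ on which $\fm R_\fm$ acts locally nilpotently — and such a module, being also required to have its elements not killed by any $r \notin \fm$, must be zero since $R_\fm / \fm R_\fm$-modules with no torsion at any height-positive prime collapse. I would verify this last reduction for the ring $R = H^*(G,k)$ directly and feed it back through the compact generators $(\kos S\fp)_\fp$ of \eqref{eq:generators} with $\fp = \fm$, concluding $\gam_\fm(\StMod G)^c = \Thick((\kos S\fm)_\fm) = \{0\}$, and hence $\gam_\fm(\StMod G) = \{0\}$ since a compactly generated category with no nonzero compact objects is zero.
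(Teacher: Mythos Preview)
Your approach has a genuine gap at its core. The module-theoretic claim you repeatedly aim for---that a graded $H^*(G,k)$-module which is simultaneously $\fm$-local and $\fm$-torsion must vanish---is false. Since $\fm=H^{\ges 1}(G,k)$ is the unique maximal homogeneous ideal, the homogeneous elements outside $\fm$ are exactly the nonzero scalars in $k$, so localisation at $\fm$ does nothing: \emph{every} graded $H^*(G,k)$-module is $\fm$-local. In particular $k=H^*(G,k)/\fm$ is a nonzero module that is both $\fm$-local and $\fm$-torsion. Your later attempts (``$\fm$-locality forcing $b_i$ to be invertible'', ``annihilated by a unit, hence zero'') all founder on this same point: no element of $\fm$ becomes invertible upon localising at $\fm$. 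Consequently the reduction to \eqref{eq:generators} at the end of your sketch is circular---you have not shown $(\kos S\fm)_\fm=0$.

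The vanishing of $\gam_\fm(\StMod G)$ is \emph{not} a general fact about $R$-linear triangulated categories at the maximal ideal; for instance $\gam_\fm(\KInj G)\ne 0$ (Lemma~\ref{le:KInj-p-local}). It is specific to the stable category, and the paper's proof uses genuinely representation-theoretic input. One shows directly that $\kos S\fm$ is projective as a $kG$-module (hence zero in $\StMod G$), as follows: for any finite-dimensional $M$ the graded module $\sHom_G^{\ges 0}(M,\kos S\fm)$ is $\fm$-torsion (a general property of Koszul objects) \emph{and} finitely generated over $H^*(G,k)$ (this is where Friedlander--Suslin finite generation enters, via Remark~\ref{re:fin-gen}); a finitely generated $\fm$-torsion module is bounded, so $\sHom_G^i(M,\kos S\fm)=0$ for $i\gg 0$, and self-injectivity of $kG$ then forces $\kos S\fm$ to be projective. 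The finite-generation step is the ingredient your purely module-theoretic strategy is missing.
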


\begin{proof} 
  Given \eqref{eq:generators} it suffices to check that
  $\kos S{\fm}=0$ in $\StMod G$, where $S$ is the direct sum of
  representative set of simple $G$-modules. For any $G$-module $M$,
  the $H^{*}(G,k)$-module $\sHom_{G}^{*}(M,\kos S\fm)$ is
  $\fm$-torsion; see
  \cite[Lemma~5.11(1)]{Benson/Iyengar/Krause:2008a}. Thus, when $M$ is
  finite dimensional, the $H^{*}(G,k)$-module
  $\sHom_{G}^{\ges 0}(M,\kos S\fm)$ is $\fm$-torsion and finitely
  generated, by Remark~\ref{re:fin-gen}, so it follows that
  $\sHom_{G}^{i}(M,\kos S\fm)=0$ for $i\gg 0$. This implies that
  $\kos S\fm$ is projective, since $kG$ is self-injective.
\end{proof}

To gain a better understanding of the discussion above, it helps to
consider the homotopy category of $\Inj G$, the injective $G$-modules.

\subsection*{The homotopy category of injectives} 

Let $\KInj G$ and $\sfD(\Mod G)$ denote the homotopy category of
$\Inj G$ and the derived category of $\Mod G$, respectively. These are
also $H^*(G,k)$-linear compactly generated tensor triangulated
category, with the tensor product over $k$. The unit of the tensor
product on $\KInj G$ is an injective resolution of the trivial
$G$-module $k$, while that of $\sfD(\Mod G)$ is $k$. The canonical
quotient functor $\KInj G\to \sfD(\Mod G)$ induces an equivalence of
triangulated category ${\KInj G}^c\xra{\sim}\sfD^b(\mod G)$, where the
target is the bounded derived category of $\mod G$; see
\cite[Proposition~2.3]{Krause:2005a}.

Taking Tate resolutions identifies $\StMod G$ with $\KacInj G$, the
full subcategory of acyclic complexes in $\KInj G$. In detail, let
$\sfp k$ and $\sfi k$ be a projective and an injective resolution of
the trivial $G$-module $k$, respectively, and let $\sft k$ be the
mapping cone of the composed morphism $\sfp k \to k\to \sfi k$; this
is a Tate resolution of $k$. Since projective and injective
$G$-modules coincide, one gets the exact triangle
\begin{equation}
\label{eq:tate-triangle} \sfp k\lra \sfi k \lra \sft k\lra
\end{equation} in $\KInj G$. For a $G$-module $M$, the complex
$M\otimes_{k}\sft k$ is a Tate resolution of $M$ and the assignment
$M\mapsto M\otimes_{k}\sft k$ induces an equivalence of categories
\[ \StMod G \xra{\sim} \KacInj G,
\] with quasi-inverse $X\mapsto Z^{0}(X)$, the submodule of cycles in
degree $0$. Assigning $X$ in $\KInj G$ to $X\otimes_{k}\sft k$ is a
left adjoint of the inclusion $\KacInj G\to \KInj G$. These results
are contained in \cite[Theorem~8.2]{Krause:2005a}.  Consider the
composed functor
\[ \pi\colon \KInj G\xra{ -\otimes_{k}\sft k} \KacInj G\xra{\ \sim\ }
\StMod G\,.
\] A straightforward verification yields that these functors are
$H^{*}(G,k)$-linear. The result below is the categorical underpinning
of Remark~\ref{re:Tate-coh} and Lemma~\ref{le:not-m}.

\begin{lemma}
\label{le:KInj-p-local} There is a natural isomorphism $\gam_{\fm}X
\cong X\otimes_{k}\sfp k$ for $X\in \KInj G$. For each $\fp$ in $\Proj
H^*(G,k)$, the functor $\pi$ induces triangle equivalences
  \[ {\KInj G}_\fp\xra{\sim}(\StMod G)_\fp\qquad\text{and}\qquad
\gam_\fp(\KInj G )\xra{\sim}\gam_\fp(\StMod G).
  \]
\end{lemma}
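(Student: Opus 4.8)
The statement has two parts: the identification $\gam_\fm X \cong X \otimes_k \sfp k$, and the triangle equivalences after localising at $\fp \in \Proj H^*(G,k)$. I would begin with the first part. The key input is the triangle \eqref{eq:tate-triangle}, tensored with $X$: since $\sfi k$ is an injective resolution of $k$ it is the tensor unit of $\KInj G$, so $X \otimes_k \sfi k \cong X$ in $\KInj G$, and the triangle becomes $X \otimes_k \sfp k \to X \to X \otimes_k \sft k \to$. Now $X \otimes_k \sft k$ is acyclic (it lies in $\KacInj G$), so it is $\fm$-torsion-free in a suitable sense — more precisely, I would argue that $\pi(X) \in \StMod G$ has no $\fm$-torsion, equivalently that $\Hom^*_{\KInj G}(C, X\otimes_k \sft k)$ is $\fm$-local for compact $C$. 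The cleanest route: the functor $-\otimes_k \sfp k$ has image in the localising subcategory generated by $\sfp k$, and one checks this is exactly $\gam_\fm(\KInj G)$, using that $\sfp k$ is a perfect complex and that $\Hom^*_{\KInj G}(\sfp k, \sfp k) \cong H^*(G,k)$ is $\fm$-torsion nowhere — actually the point is that $\gam_\fm(\KInj G)$ is by definition the torsion objects, and $\sfp k$ generates it because after applying $\pi$ it becomes $0$ and the kernel of $\pi$ is precisely $\Loc(\sfp k)$. So the triangle $X \otimes_k \sfp k \to X \to X \otimes_k \sft k \to$ exhibits $X \otimes_k \sfp k$ as the torsion part and $X\otimes_k \sft k$ as the torsion-free part, whence $\gam_\fm X \cong X \otimes_k \sfp k$ by uniqueness of the torsion-localisation triangle.

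For the second part, fix $\fp \in \Proj H^*(G,k)$, so $\fm \not\subseteq \fp$; pick $r \in \fm \setminus \fp$. The functor $\pi$ is $H^*(G,k)$-linear and preserves coproducts, so by Lemma \ref{le:gamma-commute} it commutes with $\gam_{\mcV(\fa)}$ for every ideal $\fa$; it also commutes with localisation at $\fp$ (which is a finite composite of the localisation-at-$r'$ functors, each of which is a Bousfield localisation compatible with coproduct-preserving exact functors). Since $\ker(\pi) = \Loc(\sfp k) = \gam_\fm(\KInj G)$ and $\sfp k$ becomes $0$ after inverting any $r \in \fm$, the functor $\pi$ induces an equivalence on the $\fm$-local (equivalently, $r$-local for $r \in \fm$) parts. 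But every $\fp$-local object and every $\fp$-local $\fp$-torsion object is in particular $r$-local for our chosen $r \in \fm \setminus \fp$ — this uses Lemma \ref{le:periodicity} to see that inverting $r$ acts invertibly on $\fp$-local objects. Hence on both $(\KInj G)_\fp$ and $\gam_\fp(\KInj G)$ the functor $\pi$ is fully faithful (its kernel meets these subcategories trivially) and essentially surjective onto $(\StMod G)_\fp$ and $\gam_\fp(\StMod G)$ respectively, the latter because $\pi$ is a localisation functor onto $\StMod G$ and localisation functors restrict to equivalences on the relevant local subcategories.

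The main obstacle, I expect, is the bookkeeping around the first claim: carefully justifying that $X \otimes_k \sfp k$ really is $\gam_\fm X$ and not merely a summand or an approximation of it. This requires identifying the localising subcategory $\Loc(\sfp k)$ inside $\KInj G$ with $\gam_{\mcV(\fm)}(\KInj G)$, which in turn rests on knowing that $\sfp k$ is a compact generator of $\ker(\pi)$ and that $\ker(\pi)$ coincides with the $\fm$-torsion objects — the inclusion $\ker(\pi) \subseteq \gam_{\mcV(\fm)}(\KInj G)$ is the subtle direction and is essentially the statement that the acyclic complexes (the image of $\StMod G$) carry no $\fm$-torsion, which is Lemma \ref{le:not-m} read through the equivalence $\StMod G \simeq \KacInj G$. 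Once that is in place, the rest is formal manipulation of Bousfield localisations and Lemma \ref{le:gamma-commute}.
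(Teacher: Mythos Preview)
Your plan matches the paper's: identify $\gam_\fm X$ with $X\otimes_k\sfp k$ via the Tate triangle, deduce $\ker\pi=\gam_{\mcV(\fm)}(\KInj G)$, and then argue that $\fp$-local objects of $\KInj G$ already lie in $\KacInj G$. But there is a real gap in the first step. To see that $X\otimes_k\sfp k\to X\to X\otimes_k\sft k$ is the $\gam_\fm$-localisation triangle you need both (i) $\gam_\fm(X\otimes_k\sft k)=0$, which does follow from Lemma~\ref{le:not-m} via $\StMod G\simeq\KacInj G$, and (ii) $X\otimes_k\sfp k$ is $\fm$-torsion. You never correctly establish (ii): the remark about $\Hom^*(\sfp k,\sfp k)\cong H^*(G,k)$ being ``$\fm$-torsion nowhere'' checks the wrong condition in the wrong direction; the next attempt (``$\sfp k$ generates $\gam_\fm(\KInj G)$ because $\ker\pi=\Loc(\sfp k)$'') presupposes $\ker\pi\subseteq\gam_{\mcV(\fm)}(\KInj G)$, which is exactly what is at stake; and in your last paragraph you attribute this very inclusion to Lemma~\ref{le:not-m}, whereas that lemma actually yields the \emph{opposite} inclusion $\gam_{\mcV(\fm)}(\KInj G)\subseteq\ker\pi$ (an $\fm$-torsion object maps under $\pi$ to an $\fm$-torsion object of $\StMod G$, which is then zero).

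The paper fills this gap by a direct check: $kG$ is $\fm$-torsion in $\KInj G$ (its graded endomorphism ring sits in degree zero, hence is annihilated by $\fm=H^{\ges 1}(G,k)$), so $\sfp k\in\Loc(kG)$ is $\fm$-torsion too, the $\fm$-torsion class being a tensor ideal localising subcategory. With (ii) in hand your decomposition argument works and the rest of the plan is sound. One minor caution for the second part: ``$\ker\pi$ meets $(\KInj G)_\fp$ trivially, so $\pi$ is fully faithful there'' is not valid for a general Verdier quotient; it works here only because the first part forces $(\KInj G)_\fp\subseteq\KacInj G$, the essential image of the fully faithful right adjoint of $\pi$.
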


\begin{proof} 
  We identify $\StMod G$ with $\KacInj G$. This entails
  $\gam_{\fm}(\KacInj G)=\{0\}$, by Lemma~\ref{le:not-m}. It is easy
  to check that $kG$ is $\fm$-torsion, and hence so is $\sfp k$, for
  it is in the localising subcategory generated by $kG$, and the class
  of $\fm$-torsion objects in $\KInj G$ is a tensor ideal localising
  subcategory; see, for instance, \cite[Section
  8]{Benson/Iyengar/Krause:2008a}. Thus, applying $\gam_{\fm}(-)$ to
  the exact triangle \eqref{eq:tate-triangle} yields
  $\sfp k \cong \gam_{\fm}(\sfi k)$. It then follows from
  \eqref{eq:tensor-identity} that
  $X\otimes_{k}\sfp k\cong \gam_{\fm}X$ for any $X$ in $\KInj G$.

From the construction of $\pi$ and \eqref{eq:tate-triangle}, the
kernel of $\pi$ is the subcategory
\[ \{X\in \KInj G \mid X\otimes_{k}\sfp k\cong X\}.
\] These are precisely the $\fm$-torsion objects in $\KInj G$, by the
already established part of the result. Said otherwise, $X\in \KInj G$
is acyclic if and only if $\gam_{\fm}X=0$.  It follows that $\KacInj
G$ contains the subcategory ${\KInj G}_\fp$ of $\fp$-local objects,
for each $\fp$ in $\Proj H^{*}(G,k)$. On the other hand, the inclusion
$\KacInj G\subseteq \KInj G$ preserves coproducts, so its left adjoint
$\pi$ preserves compactness of objects and all compacts of $\KacInj G$
are in the image of $\pi$. Given this a simple calculation shows that
${\KInj G}_\fp$ contains ${\KacInj G}_\fp$. Thus ${\KacInj G}_\fp=
{\KInj G}_\fp$.
\end{proof}

\section{Passage to closed points}
\label{sec:closed-points} Let $G$ be a finite group scheme over a
field $k$ of positive characteristic.  In this section we describe a
technique that relates the $\fp$-local $\fp$-torsion objects in
$\StMod G$, for a point $\fp$ in $\Proj H^{*}(G,k)$, to the
corresponding modules at a closed point defined over a field extension
of $k$. Recall that a point $\fm$ is \emph{closed} when it is maximal
with respect to inclusion: $\fm\subseteq \fq$ implies $\fm=\fq$ for
all $\fq$ in $\Proj H^{*}(G,k)$. In what follows, $k(\fp)$ denotes the
graded residue field of $H^{*}(G,k)$ at $\fp$.

For a field extension $K/k$ extension of scalars and restriction give
exact functors
\[ K\otimes_{k}(-)\colon \StMod G\lra \StMod G_{K}
\quad\text{and}\quad (-)\da_{G}\colon \StMod G_K\lra \StMod G\,.
\] Moreover, since $H^{*}(G_{K},K)\cong K\otimes_{k} H^{*}(G,k)$ as
$K$-algebras, $K\otimes_{k}(-)$ yields a homomorphism $H^{*}(G,k)\to
H^{*}(G_{K},K)$ of rings. It induces a map
\[ 
\Proj H^{*}(G_{K},K) \lra \Proj H^{*}(G,k)\,,
\] 
with $\fq$ mapping to $\fp:=\fq\cap H^{*}(G,k)$. We say that $\fq$ \emph{lies over} $\fp$ to indicate this. 
The main objective of this section is the proof of the following result.

\begin{theorem}
\label{th:realisability} 
Fix $\fp$ in $\Proj H^{*}(G,k)$ and $K/k$ a purely transcendental extension of degree $\dim (H^{*}(G,k)/\fp)-1$. There exists a closed point $\fm$ in $\Proj H^{*}(G_{K},K)$ lying over $\fp$ with $k(\fm)\cong k(\fp)$ such that the functor $(-)\da_{G}$ restricts to functors
\[ 
\gam_\fm(\StMod G_K)\to \gam_\fp(\StMod G) \quad\text{and}\quad
\gam_\fm(\StMod G_K)^c\to \gam_\fp(\StMod G)^c
\] 
that are dense.
\end{theorem}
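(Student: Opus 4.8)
The plan is to reduce the statement to a concrete field-theoretic construction on the graded coordinate ring $R := H^*(G,k)$, and then transport the resulting closed point through the localisation functors using the compatibility of $\gam$ with base change established in Lemma~\ref{le:gamma-commute}. First I would observe that since $R/\fp$ is a finitely generated graded $k$-algebra of Krull dimension $d$, there is a purely transcendental extension $K/k$ of transcendence degree $d-1$ together with a homogeneous prime $\fm$ of $R_K := H^*(G_K,K) \cong K\otimes_k R$ lying over $\fp$ such that $R_K/\fm$ has Krull dimension one, i.e. $\fm$ is a closed point of $\Proj R_K$, and such that the induced map $k(\fp)\to k(\fm)$ on graded residue fields is an isomorphism. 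This is a standard Noether-normalization-type argument adapted to the graded setting: one chooses a homogeneous system of parameters for $R/\fp$, adjoins $d-1$ of the corresponding indeterminates to $k$, and checks that the remaining parameter generates a height-one locus; the isomorphism of residue fields is what forces the ``one can choose'' emphasis in the introduction. I expect this to be largely bookkeeping given \cite{Benson/Iyengar/Krause/Pevtsova:2015b}, which is cited as the source to build on.

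Next I would identify the two subcategories that the functor $(-)\da_G$ is supposed to relate. By Lemma~\ref{le:gamma-commute}, applied to $F = K\otimes_k(-)\colon \StMod G\to \StMod G_K$ and the ring map $R\to R_K$, one has $F\comp\gam_{\mcV(\fa)}\cong \gam_{\mcV(\fa R_K)}\comp F$ for any ideal $\fa$; combined with the analogous statement for localisation one gets that extension of scalars is compatible with the $\gam_\fp$-functors, and the adjoint functor $(-)\da_G$ inherits a compatibility in the other direction. The key point is that $\fm$ lies over $\fp$, so that $\fp R_K\subseteq \fm$ and the $\fm$-torsion, $\fm$-local objects of $\StMod G_K$ restrict to $\fp$-local, $\fp$-torsion objects of $\StMod G$; here one uses that $\Hom^*_{\StMod G}(C, X\da_G)$ is computed from $\Hom^*_{\StMod G_K}(K\otimes_k C, X)$ by restriction of scalars along $R\to R_K$, so that $\fm$-torsion over $R_K$ becomes $\fp$-torsion over $R$ after this restriction, because $\fm\cap R = \fp$. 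Similarly $\fm$-local restricts to $\fp$-local. This shows $(-)\da_G$ does land in $\gam_\fp(\StMod G)$, and since restriction of scalars sends finite-dimensional modules to finite-dimensional modules and preserves compactness (its left adjoint $K\otimes_k(-)$ preserves coproducts), it also carries $\gam_\fm(\StMod G_K)^c$ into $\gam_\fp(\StMod G)^c$.

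The substantive step is \emph{density}: every object of $\gam_\fp(\StMod G)$ must be, up to isomorphism, a direct summand of (indeed isomorphic to) the restriction of some object from $\gam_\fm(\StMod G_K)$, and likewise on compacts. By \eqref{eq:generators}, $\gam_\fp(\StMod G)^c = \Thick((\kos S\fp)_\fp)$ where $S$ is the sum of the simples, so it suffices to realise a generator. Here the residue-field isomorphism $k(\fp)\cong k(\fm)$ is decisive: it means that the Koszul object $\kos{S_K}{\fm}$, localised at $\fm$, restricts along $(-)\da_G$ to an object whose stable cohomology, after localising at $\fp$, agrees with that of $(\kos S\fp)_\fp$ — one compares the two Koszul complexes via \eqref{eq:kos-thick}, using that $\sqrt{\fm} = \sqrt{\fp R_K + (\text{parameter})}$ and that the extra parameter becomes a unit after inverting the elements outside $\fm$, exactly because $R_K/\fm$ has dimension one and $k(\fm) = k(\fp)$. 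From this one deduces that the thick subcategory generated by the restricted generator is all of $\gam_\fp(\StMod G)^c$, giving density on compacts; density on the big categories then follows because $\gam_\fp(\StMod G)$ is compactly generated by $\gam_\fp(\StMod G)^c$ and $(-)\da_G$ commutes with coproducts. The hard part, and the place where the precise choice of $\fm$ in Step~1 really earns its keep, is this last comparison of Koszul generators — verifying that after the localisations the parameter adjoined in passing from $\fp$ to $\fm$ genuinely disappears, so that the restricted object is not merely supported at $\fp$ but actually generates the whole $\fp$-local $\fp$-torsion category.
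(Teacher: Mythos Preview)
Your proposal has a genuine gap in the density argument, and it stems from missing the key technical input that the paper supplies separately as Theorem~\ref{thm:realisability}. You argue that the essential image of $(-)\da_G$ on compacts contains a thick generator of $\gam_\fp(\StMod G)^c$ and then conclude density. But the essential image of an exact functor is not in general a thick subcategory: it need not be closed under cones (a morphism $F(X)\to F(Y)$ in the target need not be $F$ of anything) nor under direct summands. For a concrete instance, restriction along $k[x]/(x^2)\twoheadrightarrow k$ on perfect complexes hits the simple $k$, which thick-generates everything, yet the free module $k[x]/(x^2)$ is not in the image. In the present situation the paper even exhibits, in Example~\ref{ex:klein-again}, that $(-)\da_G$ is not full on $\gam_\fm(\StMod G_K)^c$, so there is no shortcut via fullness. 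The same problem recurs in your passage from compacts to the big category: a coproduct-preserving exact functor that is dense on a set of compact generators is not thereby dense on all objects. What the paper actually does is construct an explicit section: with $\bsb$ as in Construction~\ref{con:generic}, the functor $\lambda(M)=M_K\otimes_K\kos K{\bsb}$ satisfies $(\lambda M)\da_G\cong M$ for every $\fp$-local $\fp$-torsion $M$. This is precisely Theorem~\ref{thm:realisability}, proved by reducing to the case $M=\kos k{\fp}$ from \cite{Benson/Iyengar/Krause/Pevtsova:2015b} and then propagating through the localising subcategory. Density on the big category is then immediate, and density on compacts follows because $\lambda$ has a coproduct-preserving right adjoint $\rho$ and hence preserves compactness.

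There is also a smaller but real error in your claim that restriction carries compacts to compacts. Since $K/k$ is a nontrivial transcendental extension, restriction does \emph{not} send finite-dimensional $G_K$-modules to finite-dimensional $G$-modules; and the adjoint reasoning you invoke is backwards, since $K\otimes_k(-)$ is the \emph{left} adjoint of restriction, so its preserving coproducts says nothing about restriction preserving compacts. That restriction nonetheless lands in $\gam_\fp(\StMod G)^c$ is a genuine and nontrivial feature of the specific $\fm$: the paper checks it by using Theorem~\ref{thm:realisability} once more to identify $(\kos{S_K}{\fq})\da_G$ with $(\kos S{\fp})_\fp$, and then uses exactness of restriction to conclude for the whole thick subcategory $\gam_\fm(\StMod G_K)^c=\Thick(\kos{S_K}{\fq})$.
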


The proof of the theorem yields more: There is a subcategory of $\gam_\fm(\StMod G_K)$ on which $(-)\da_{G}$ is full and dense; ditto for the category of compact objects. However, the functor need not be full on all of $\gam_\fm(\StMod G_K)^{c}$; see Example~\ref{ex:klein-again}.

Here is one consequence of Theorem~\ref{th:realisability}.

\begin{corollary}
\label{co:realisability} The compact objects in $\gam_{\fp}(\StMod G)$
are precisely the restrictions of finite dimensional $G_{K}$-modules
in $\gam_{\fm}(\StMod G_K)$.
\end{corollary}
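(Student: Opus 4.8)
The plan is to deduce Corollary~\ref{co:realisability} directly from Theorem~\ref{th:realisability}, so the work is essentially unpacking what "dense" means on compact objects together with the standard fact that compact objects of an idempotent-complete triangulated category are closed under retracts.

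\smallskip

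First I would recall that $\gam_\fp(\StMod G)$ is a compactly generated triangulated category, hence idempotent complete, so its subcategory of compact objects $\gam_\fp(\StMod G)^c$ is a thick (in particular, retract-closed) subcategory. Theorem~\ref{th:realisability} provides a purely transcendental extension $K/k$ and a closed point $\fm\in\Proj H^*(G_K,K)$ lying over $\fp$ such that restriction of scalars gives a dense functor $\gam_\fm(\StMod G_K)^c\to\gam_\fp(\StMod G)^c$. Density means every object of $\gam_\fp(\StMod G)^c$ is a direct summand (equivalently, stable-isomorphic to a direct summand) of the restriction of some object of $\gam_\fm(\StMod G_K)^c$. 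So for the inclusion "$\supseteq$" I need only observe that restrictions of objects of $\gam_\fm(\StMod G_K)^c$ land in $\gam_\fp(\StMod G)^c$ — which is part of the assertion of Theorem~\ref{th:realisability} — and then that their summands do too, by retract-closedness just noted. For the reverse inclusion "$\subseteq$", density gives that every compact $X$ in $\gam_\fp(\StMod G)$ is a summand of some $Y\!\da_G$ with $Y\in\gam_\fm(\StMod G_K)^c$; I then need to promote "summand of a restriction" to "restriction of an object".

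\smallskip

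The one point that needs a short argument is the identification of $\gam_\fm(\StMod G_K)^c$ with the \emph{finite dimensional} objects of $\gam_\fm(\StMod G_K)$, and the closure of the latter under summands, so that a retract of $Y\!\da_G$ is again of the form $Z\!\da_G$ for a finite dimensional $Z$. Since $\fm$ is a closed point, $\gam_\fm(\StMod G_K)^c$ coincides with $\Thick\big((\kos{S_K}{\fm})_\fm\big)$ by \eqref{eq:generators}, where $S_K$ is the direct sum of the simple $G_K$-modules; each Koszul object $\kos{S_K}{\fm}$ is finite dimensional, and at a closed point the localisation at $\fm$ does not change these objects up to stable isomorphism (every element of $H^*(G_K,K)\setminus\fm$ acts invertibly on $\fm$-torsion objects by Lemma~\ref{le:periodicity}, and a finite dimensional $\fm$-torsion module is already $\fm$-local since $\fm$ is maximal in $\Proj$). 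Hence $\gam_\fm(\StMod G_K)^c$ consists precisely of the finite dimensional $\fm$-local $\fm$-torsion $G_K$-modules, up to stable isomorphism; and $\mod G_K$ being closed under direct summands in $\StMod G_K$, so is this subcategory. Then if $X$ is compact in $\gam_\fp(\StMod G)$ and $X\oplus X'\cong Y\!\da_G$ stably with $Y\in\gam_\fm(\StMod G_K)^c$ finite dimensional, I split off the corresponding idempotent of $Y$: since restriction $(-)\!\da_G$ is exact and additive, and $\gam_\fm(\StMod G_K)^c$ is idempotent complete, the idempotent on $Y\!\da_G$ picking out $X$ is the restriction of an idempotent on $Y$, whose image $Z$ is a finite dimensional object of $\gam_\fm(\StMod G_K)$ with $Z\!\da_G\cong X$ stably. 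This gives "$\subseteq$".

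\smallskip

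The main obstacle I anticipate is the last splitting step: passing an idempotent on $Y\!\da_G$ back to an idempotent on $Y$. Restriction of scalars is not full in general (this is flagged after Theorem~\ref{th:realisability}), so I cannot blindly lift morphisms; instead I should use the structure more carefully — either invoke the part of the proof of Theorem~\ref{th:realisability} that identifies a subcategory on which $(-)\!\da_G$ is full and dense, and arrange $Y$ to lie in it, or argue that the relevant idempotent endomorphism, built from the data realising $X$ as a summand, is in the image of $(-)\!\da_G$. Granting that, the corollary follows immediately by combining the two inclusions. I would also remark that $X$ being stably isomorphic to $Z\!\da_G$ for finite dimensional $Z$ is exactly the assertion "$X$ is the restriction of a finite dimensional $G_K$-module in $\gam_\fm(\StMod G_K)$", since stable isomorphism is all that is being claimed.
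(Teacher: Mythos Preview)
Your main difficulty stems from a misreading of the word \emph{dense} in Theorem~\ref{th:realisability}. In this paper it means \emph{essentially surjective}: the proof of Theorem~\ref{th:realisability} shows that for any $\fp$-local $\fp$-torsion $G$-module $M$ one has $M\cong(\lambda M)\!\da_G$, and then that $\lambda$ preserves compactness, so every compact object of $\gam_\fp(\StMod G)$ is \emph{isomorphic} (not merely a summand of) the restriction of a compact object of $\gam_\fm(\StMod G_K)$. With this reading, your idempotent-lifting step is simply not needed, and the obstacle you correctly flag---that $(-)\!\da_G$ is not full, so idempotents on $Y\!\da_G$ need not lift to $Y$---never arises.

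The paper's argument is then just two lines: since $\fm$ is closed, $\gam_\fm=\gam_{\mcV(\fm)}$, and by \cite[Theorem~6.4]{Benson/Iyengar/Krause:2008a} one has $\gam_{\mcV(\fm)}(\StMod G_K)^c=\gam_{\mcV(\fm)}(\StMod G_K)\cap\stmod G_K$, so compacts at $\fm$ are exactly the finite dimensional $\fm$-torsion modules. Combining this with the essential surjectivity on compacts from Theorem~\ref{th:realisability} gives both inclusions at once. Your own justification that compacts at the closed point $\fm$ are finite dimensional (via \eqref{eq:generators} and Lemma~\ref{le:periodicity}) is fine and amounts to the same thing; the only genuine gap is the unnecessary detour through summands and the unresolvable lifting problem it creates. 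If you reread ``dense'' as ``essentially surjective'', your first paragraph already contains the complete proof.
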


\begin{proof} 
By \cite[Theorem~6.4]{Benson/Iyengar/Krause:2008a}, for any ideal $\fa$ in $H^{*}(G,k)$, we have
\[ 
\gam_{\mcV(\fa)}(\StMod G)^c=\gam_{\mcV(\fa)}(\StMod G)\cap\stmod G\,.
\] 
Applying this observation to the ideal $\fm$ of $H^{*}(G_{K},K)$
and noting that $\gam_{\fm}=\gam_{\mcV(\fm)}$, since $\fm$ is a closed
point, the desired result follows from Theorem~\ref{th:realisability}.
\end{proof}

The closed point in Theorem~\ref{th:realisability} depends on the
choice of a Noether normalisation of $H^{*}(G,k)/\fp$ as is explained
in the construction below, from
\cite[\S7]{Benson/Iyengar/Krause/Pevtsova:2015b}.

\begin{construction}
\label{con:generic} 
Fix $\fp$ in $\Proj H^{*}(G,k)$; the following
construction is relevant only when $\fp$ is not a closed point. Choose
elements $\bsa:=a_{0},\dots,a_{d-1}$ in $H^{*}(G,k)$ of the same
degree such that their image in $H^{*}(G,k)/\fp$ is algebraically
independent over $k$ and $H^{*}(G,k)/\fp$ is finitely generated as a
module over the subalgebra $k[\bsa]$.  Thus the Krull dimension of
$H^{*}(G,k)/\fp$ is $d$. Set $K:=k(t_{1},\dots,t_{d-1})$, the field of
rational functions in indeterminates $t_{1},\dots,t_{d-1}$ and
\[ 
b_{i}:= a_{i} - a_{0}t_{i}\quad\text{for $i=1,\dots,d-1$}
\] 
viewed as elements in $H^{*}(G_{K},K)$. Let $\fp'$ denote the
extension of $\fp$ to $H^*(G_K,K)$, and set
\begin{equation*}
\label{eq:generic-point} \fq:= \fp' + (\bsb)\qquad\text{and}\qquad
\fm:=\sqrt \fq\,.
\end{equation*} 
It is proved as part of
\cite[Theorem~7.7]{Benson/Iyengar/Krause/Pevtsova:2015b} that the
ideal $\fm$ is a closed point in $\Proj H^{*}(G_{K},K)$ with the
property that $\fm\cap H^{*}(G,k)=\fp$. What is more, it follows from
the construction (see in particular \cite[Lemma~7.6, and
(7.2)]{Benson/Iyengar/Krause/Pevtsova:2015b}) that the induced
extension of fields is an isomorphism
\begin{equation*}
\label{eq:residue} k(\fp)\xra{\ \cong\ } k(\fm)\,.
 \end{equation*}

The sequence of elements $\bsb$ in $H^{*}(G_{K},K)$ yields a morphism
$K\to \Omega^{s} (\kos K{\bsb})$, where $s=\sum_{i}|b_{i}|$, and
composing its restriction to $G$ with the canonical morphism $k\to
K\da_{G}$ gives in $\StMod G$ a morphism
\[ f\colon k \lra \Omega^{s} (\kos K{\bsb})\da_{G}\,.
\] Since the $a_{i}$ are not in $\fp$, Lemma~\ref{le:periodicity}
yields a natural stable isomorphism
\begin{equation}
\label{eq:periodicity} \Omega^{s}M \cong M
\end{equation} for any $\fp$-local $G$-module $M$. This remark will be
used often in the sequel.
\end{construction}

By \cite[Lemma~2.1]{Benson/Iyengar/Krause/Pevtsova:2016a}, for any
$G_K$-module $N$ there is a natural isomorphism
\begin{equation}\label{eq:res-ind} M\otimes_k N\da_G\cong
(M_K\otimes_K N)\da_G.
\end{equation}

The result below extends
\cite[Theorem~8.8]{Benson/Iyengar/Krause/Pevtsova:2015b}; the latter
is the case $M=\kos k{\fp}$.

\begin{theorem}
\label{thm:realisability} For any $G$-module $M$, the morphism
$M\otimes_k f$ induces a natural stable isomorphism of $G$-modules
\[ \gam_{\fp}M\cong M\otimes_k \gam_\fm(\kos K{\bsb})\da_{G} \cong
(M_K\otimes_K \gam_\fm(\kos K{\bsb}))\da_{G}\,.
\] When $M$ is $\fp$-torsion, these induce natural stable isomorphisms
\[ \gam_{\fp}M \cong M_\fp\cong M\otimes_k (\kos K{\bsb})\da_{G} \cong
(M_K\otimes_K \kos K{\bsb})\da_{G} \,.
\]
\end{theorem}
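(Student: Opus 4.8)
The plan is to build the first isomorphism from the Koszul-object calculus recalled in the excerpt, then obtain the remaining ones as corollaries. First I would observe that $\gam_\fm(\kos K{\bsb})$ is an object of $\gam_\fm(\StMod G_K)$: the Koszul object $\kos K{\bsb}$ is built from the sequence $\bsb$, which generates an ideal whose radical is $\fm$, so by \eqref{eq:kos-thick} it lies in the same thick subcategory as $\kos K{\fm}$, and applying $\gam_\fm$ lands us in $\gam_\fm(\StMod G_K)$. The point of passing through this object is that $\bsb$ cuts $\fp'$ down to a closed point $\fm$ over $\fp$, so that ``$\gam_\fm$ at the closed point'' becomes the correct replacement for ``$\gam_\fp$ downstairs.''

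Next I would analyse the map $f\colon k\to \Omega^{s}(\kos K{\bsb})\!\da_G$ from Construction~\ref{con:generic} after tensoring with $M$ over $k$. Tensoring with $M$ and using the projection-type formula \eqref{eq:res-ind} identifies $M\otimes_k(\kos K{\bsb})\!\da_G$ with $(M_K\otimes_K \kos K{\bsb})\!\da_G$; combined with the periodicity isomorphism \eqref{eq:periodicity}, which kills the $\Omega^{s}$ once everything in sight is $\fp$-local, this produces the second isomorphism in the first display from the first, assuming the first. For the first isomorphism itself, the strategy is to show that $M\otimes_k\gam_\fm(\kos K{\bsb})\!\da_G$ is $\fp$-local and $\fp$-torsion as a $G$-module, and that the map $M\otimes_k f$ becomes, after applying $\gam_\fp$, an isomorphism. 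The $\fp$-local $\fp$-torsion claim is where Lemma~\ref{le:gamma-commute} does the work: restriction of scalars $(-)\!\da_G$ is $H^*(G,k)$-linear (the action factoring through $H^*(G_K,K)$), so $\gam_{\mcV(\fa)}$ commutes with it for any ideal $\fa$ of $H^*(G,k)$; applying this with $\fa=\fp$, and using that $\fm\cap H^*(G,k)=\fp$ so that $\fp$ extends into $\fm$-primary data upstairs, shows $\gam_\fm(\kos K{\bsb})\!\da_G$ is $\fp$-torsion, and then $\fp$-locality follows from the periodicity afforded by the $a_i\notin\fp$ via Lemma~\ref{le:periodicity} and \eqref{eq:periodicity}. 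The generation statement \eqref{eq:generators}, \eqref{eq:kos-thick} reduces checking that $\gam_\fp(M\otimes_k f)$ is an isomorphism to testing against the compact generators $(\kos S{\fp})_\fp$, where it comes down to the already-known case $M=\kos k{\fp}$ recorded as \cite[Theorem~8.8]{Benson/Iyengar/Krause/Pevtsova:2015b}, together with the tensor-functor identity \eqref{eq:tensor-identity} for $\gam_\fp$.

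For the second display, suppose $M$ is $\fp$-torsion. Then $\gam_\fp M\cong M_\fp$ is immediate: $\fp$-torsion plus $\fp$-local is the defining property of $\gam_\fp$, and an $\fp$-torsion object becomes $\fp$-local after localising at $\fp$, with the torsion property preserved. It remains to replace $\gam_\fm(\kos K{\bsb})$ by $\kos K{\bsb}$ in the formula. This is where I would use that $\kos K{\bsb}$ is already $\fm$-torsion up to the thick-subcategory identification \eqref{eq:kos-thick}, so that tensoring a $\fp$-torsion $M$ with $\kos K{\bsb}\!\da_G$ already produces something $\fp$-local and $\fp$-torsion, making the natural map $\gam_\fm(\kos K{\bsb})\to\kos K{\bsb}$ an isomorphism after tensoring with such an $M$; concretely, $M\otimes_k\gam_\fm(\kos K{\bsb})\!\da_G\cong \gam_\fp(M\otimes_k(\kos K{\bsb})\!\da_G)\cong M\otimes_k(\kos K{\bsb})\!\da_G$, the last step because the object is already $\fp$-local $\fp$-torsion. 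Unwinding through \eqref{eq:res-ind} then gives the final term.

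The main obstacle I anticipate is the verification that $M\otimes_k f$ becomes an isomorphism after applying $\gam_\fp$ for \emph{arbitrary} $M$, rather than just on compacts. The clean route is to note that both source and target of $M\otimes_k f$, after applying $\gam_\fp$, are exact functors in $M$ that preserve coproducts (tensoring over $k$ and restriction of scalars both do), and that $\gam_\fp$ is smashing in the relevant sense by \eqref{eq:tensor-identity}; so it suffices to check the isomorphism on a generating set of $\StMod G$, e.g.\ the finite dimensional modules, and there one reduces via \eqref{eq:generators} and dévissage to the base case $M=\kos k{\fp}$. Keeping track of the periodicity shift $\Omega^{s}$ consistently through \eqref{eq:periodicity}, and making sure the $H^*(G,k)$-linearity hypotheses of Lemma~\ref{le:gamma-commute} are genuinely met by $(-)\!\da_G$ with the factorisation through $H^*(G_K,K)$, are the points requiring care but no new ideas.
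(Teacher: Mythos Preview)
Your proposal uses the same ingredients as the paper---the tensor-ideal localising subcategory argument, the base case $M=\kos k{\fp}$ from \cite[Theorem~8.8]{Benson/Iyengar/Krause/Pevtsova:2015b}, Lemma~\ref{le:gamma-commute}, and the projection formula~\eqref{eq:res-ind}---but you reverse the order: you try to prove the first display (arbitrary $M$) first and then specialise to the $\fp$-torsion case. This reversal creates a genuine gap. Your reduction ``to the base case $M=\kos k{\fp}$'' via \eqref{eq:generators} and d\'evissage does not work for arbitrary $M$: the object $\kos k{\fp}$ generates only the tensor-ideal localising subcategory of $\fp$-torsion modules, not all of $\StMod G$, and \eqref{eq:generators} is a statement about generators of $\gam_\fp(\StMod G)^c$, not of $\StMod G$ itself. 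So the localising-subcategory argument you sketch only establishes the isomorphism for $\fp$-torsion $M$. Moreover, the cited base case is literally the second display (involving $\kos K{\bsb}$, not $\gam_\fm(\kos K{\bsb})$), so even for $M=\kos k{\fp}$ you would first need the bridge you postpone to your third paragraph, making the dependencies circular as written.

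The paper avoids this by proving the second display first: for $\fp$-torsion $M$ the class of modules for which $M_\fp\cong M\otimes_k(\kos K{\bsb})\!\da_G$ is a tensor-ideal localising subcategory containing $\kos k{\fp}$, hence contains all $\fp$-torsion modules. The first display for arbitrary $M$ then follows by applying the second display to $\gam_{\mcV(\fp)}M$ and a short chain of identifications (using Lemma~\ref{le:gamma-commute} for the \emph{extension} functor $K\otimes_k(-)$, together with $\gam_{\mcV(\fp'+(\bsb))}=\gam_{\mcV(\fm)}$ on $(\bsb)$-torsion objects). Your approach can be repaired in exactly this way, or alternatively by observing at the outset that both sides of the first display depend only on $\gam_{\mcV(\fp)}M$ (since $\gam_\fm(\kos K{\bsb})\!\da_G$ is $\fp$-torsion, tensoring with it factors through $\gam_{\mcV(\fp)}$), which immediately reduces to the $\fp$-torsion case where your d\'evissage is valid.
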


\begin{proof} 
  We begin by verifying the second set of isomorphisms. As $M$ is
  $\fp$-torsion so is $M_{\fp}$ and then it is clear that the natural
  map $\gam_{\fp}M=\gam_{\mcV(\fp)}M_\fp\to M_{\fp}$ is an
  isomorphism. The third of the desired isomorphisms follows from
  \eqref{eq:res-ind}. It thus remains to check that $M\otimes_{k}f$
  induces an isomorphism
\[ M_\fp\cong M\otimes_k (\kos K{\bsb})\da_{G}\,.
\] It is easy to verify that the modules $M$ having this property form
a tensor ideal localising subcategory of $\StMod G$.  Keeping in mind
~\eqref{eq:periodicity}, from
\cite[Theorem~8.8]{Benson/Iyengar/Krause/Pevtsova:2015b} one obtains
that this subcategory contains $\kos k{\fp}$. The desired assertion
follows since the $\fp$-torsion modules form a tensor ideal localising
subcategory of $\StMod G$ that is generated by $\kos k{\fp}$; see
\cite[Proposition~2.7]{Benson/Iyengar/Krause:2011a}.

Now we turn to the first set of isomorphisms. There the second one is
by \eqref{eq:res-ind}, so we focus on the first. Let $M$ be an
arbitrary $G$-module, and let $\fp'$ be as in
Construction~\ref{con:generic}. Since $\gam_{\mcV(\fp)}M$ is
$\fp$-torsion, the already established isomorphism yields the second
one below.
\begin{align*} 
\gam_{\fp}M&\cong (\gam_{\mcV(\fp)} M)_\fp\\ & \cong
((\gam_{\mcV(\fp)} M)_K\otimes_K \kos K{\bsb})\da_{G}\\ & \cong
(\gam_{\mcV(\fp')} (M_K)\otimes_K \kos K{\bsb})\da_{G}\\ & \cong
(M_K\otimes_K \gam_{\mcV(\fp')}(\kos K{\bsb}))\da_{G}\\ & \cong
(M_K\otimes_K \gam_{\mcV(\fp'+(\bsb))}(\kos K{\bsb}))\da_{G}\\ & \cong
(M_K\otimes_K \gam_\fm(\kos K{\bsb}))\da_{G}
\end{align*} 
The third one is by Lemma~\ref{le:gamma-commute}, applied
to the functor $K\otimes_{k}(-)$ from $\StMod G$ to $\StMod
G_{K}$. The next one is standard while the penultimate one holds
because $\kos K{\bsb}$ is $(\bsb)$-torsion. This completes the proof.
\end{proof}

In the next remark we recast part of Theorem~\ref{thm:realisability}.

\begin{remark}
\label{rem:adjoints} 
Fix a point $\fp$ in $\Proj H^{*}(G,k)$, and let
$K$, $\bsb$ and $\fm$ be as in Construction~\ref{con:generic}.
Consider the following adjoint pair of functors.
\begin{alignat*}{3} \lambda\colon &\StMod G \lra \StMod G_{K}&
\quad\text{and}\quad \rho\colon &\StMod G_{K} \lra \StMod G \\
&\lambda(M) = M_{K}\otimes_{K}\kos K{\bsb} & &\rho(N) = \Hom_{K}(\kos
K{\bsb},N)\da_{G}
\end{alignat*} 
It is easy to check that this induces an adjoint pair
\[
\begin{tikzcd} 
  \gam_{\fp}(\StMod G) \arrow[yshift=1ex]{rr}{\lambda} &&
  \gam_{\fm}(\StMod G_{K}) \arrow[yshift=-1ex]{ll}{\rho}\, .
\end{tikzcd}
\] 
Theorem~\ref{thm:realisability} implies that $(\lambda M)\da_{G}\cong M$ for any $M$ in $\gam_{\fp}(\StMod G)$. 
\end{remark}

\begin{proof}[Proof of Theorem~\ref{th:realisability}] 
 Let $\fm$, $\fq$, and $\bsb$ be as in
  Construction~\ref{con:generic}. As noted there, $\fm$ is a closed
  point in $\Proj H^{*}(G_{K},K)$ lying over $\fp$ and
  $k(\fm)\cong k(\fp)$.  The modules in $\gam_\fp(\StMod G)$ are
  precisely those with support contained in $\{\fp\}$. It then follows
  from \cite[Proposition~6.2]{Benson/Iyengar/Krause/Pevtsova:2015b}
  that $(-)\da_G$ restricts to a functor
\[ 
\gam_\fm(\StMod G_K)\lra \gam_\fp(\StMod G)\,.
\] 
This functor is  dense because for any $G$-module $M$ that is $\fp$-local and $\fp$-torsion one has $M\cong (\lambda M)\da_{G}$ where $\lambda$ is the functor from Remark~\ref{rem:adjoints}.

Consider the restriction of $(-)\da_G$ to compact objects in $\gam_{\fm}(\StMod G_{K})$.  First we verify that its image is contained in the compact objects of $\gam_\fp(\StMod G)$. To this end, it suffices to check that there exists a generator of $\gam_\fm(\StMod
G_K)^{\sfc}$, as a thick subcategory, whose restriction is in $\gam_\fp(\StMod G)^{\sfc}$.

Let $S$ be the direct sum of a representative set of simple $G$-modules. Each simple $G_{K}$-module is (isomorphic to) a direct summand of $S_{K}$, so from \eqref{eq:generators} one gets the first equality below:
\[ 
\gam_\fm(\StMod G_K)^c=\Thick(\kos {S_K}{\fm})=\Thick(\kos{S_{K}}{\fq})\,.
\] 
The second one is by \eqref{eq:kos-thick}. From Theorem~\ref{thm:realisability} one gets isomorphisms of $G$-modules
\[ 
(\kos {S_K}{\fq})\da_G \cong ((\kos S{\fp})_K\otimes_K \kos K{\bsb})\da_G\cong (\kos S{\fp})_\fp\,.
\] 
It remains to note that $(\kos S{\fp})_\fp$ is in $\gam_{\fp}(\StMod G)^{\sfc}$, again by \eqref{eq:generators}.

The last item to verify is that restriction is dense also on compacts. Since $\kos K{\bsb}$ is compact, the  functor $\rho$ from Remark~\ref{rem:adjoints} preserves coproducts, and hence its left adjoint $\lambda$ preserves compactness. Thus Theorem~\ref{thm:realisability} gives the desired result.
\end{proof}

Theorem~\ref{thm:realisability} yields that $f\cong (f_{K})\da_{G}$ for any morphism $f$ in $\gam_{\fp}(\StMod G)$; in particular, the restriction functor is full and dense on the subcategory of $\gam_{\fm}(\StMod G_{K})$ consisting of objects of the form $\lambda M$, where $M$ is a $\fp$-local $\fp$-torsion $G$-module. It need not be full on the entire category, or even on its subcategory of compact objects; see Example~\ref{ex:klein-again}, modeled on the following one from commutative algebra. 

\begin{example}
\label{ex:dvr}
Let $k$ be a field and $k[a]$ the polynomial ring in an indeterminate $a$. Let $\sfD(k[a])$ denote its derived category; it is $k[a]$-linear in an obvious way. For the prime  $\fp:=(0)$ of $k[a]$ the $\fp$-local $\fp$-torsion subcategory $\gam_{\fp}(\sfD(k[a]))$ is naturally identified with the derived category of $k(a)$, the field of rational functions in $a$.

With $k(t)$ denoting the field of rational functions in an indeterminate $t$, the maximal ideal $\fm:=(a-t)$ of $k(t)[a]$ lies over the prime ideal $\fp$ of $k[a]$. The inclusion $k[a]\subset k(t)[a]$ induces an isomorphism $k(a)\cong k(t)[a]/\fm\cong k(t)$. The analogue of Theorem~\ref{thm:realisability} is that restriction of scalars along the inclusion $k[a]\subset k(t)[a]$ induces a dense functor
\[
\gam_{\fm}(\sfD(k(t)[a]))\lra \gam_{\fp}(\sfD(k[a])) \simeq \sfD(k(a))\,.
\]
This property can be checked directly: The $\fm$-torsion module $k(t)[a]/(a-t)$ restricts to $k(a)$, and each object in $\sfD(k(a))$ is a direct sum of shifts of $k(a)$. This functor is however not full: For $n\ge 1$, the $k(t)[a]$-module $L:=k(t)[a]/(a-t)^{n}$ is $\fm$-torsion, and satisfies
\[
\rank_{k(a)}\End_{\sfD}(L) = n \quad\text{and}\quad  \rank_{k(a)}\End_{\sfD}(L\da_{k[a]}) = n^{2}
\]
where $\sfD$ stands for the appropriate derived category. In particular, if $n\ge 2$, the canonical map 
$\End_{\sfD}(L)\to \End_{\sfD}(L\da_{k[a]})$ is not surjective. 

Indeed, the module of endomorphisms of $L$ in  $\sfD(k(t)[a])$ is
\[
\End_{\sfD}(L) = \Hom_{k(t)[a]}(L,L) \cong L\,.
\]
In particular, it has rank $n$ as an $k(a)$-vector space. On the other hand, restricted to $k[a]$, the  $k(t)[a]$-module $k(t)/(a-t)$ is isomorphic to $k(a)$. It then follows from the exact sequences
\[
0\lra \frac {k(t)[a]}{(a-t)} \xra{1\mapsto (a-t)^{i}}  \frac {k(t)[a]}{(a-t)^{i+1}} \lra \frac {k(t)[a]}{(a-t)^{i}}\lra 0
\]
of $k(t)[a]$-modules that $L$ restricts to a direct sum of $n$ copies of $k(a)$, so that 
\[
\End_{\sfD}(L\da_{k[a]}) = \Hom_{k(a)}(k(a)^{n},k(a)^{n})\cong k(a)^{n^{2}}\,.
\]
In particular, this has rank $n^{2}$ as a $k(a)$-vector space.
\end{example}

\begin{example}
\label{ex:klein-again}
Let $V =  \bbZ/2 \times \bbZ/2$ and $k$ a field of characteristic two. As $k$-algebras, one has $H^{*}(V,k)\cong k[a,b]$, where $a$ and $b$ are indeterminates of degree one. For the prime ideal $\fp=(0)$ of $k[a,b]$, Construction~\ref{con:generic} leads to the field extension $K:=k(t)$ of $k$, and the closed point $\fm = (b-at)$ of $\Proj H^{*}(V_{K},K)$. 

Set $F:=\sEnd_{V}(k_{\fp})$; this is the component in degree $0$ of the graded field $k[a,b]_{\fp}$ and can be identified with $K$; see Construction~\ref{con:generic}.

Fix an integer $n\ge 1$ and set $N:=\kos K{(b-at)^{n}}$. This is a finite-dimensional $\fm$-torsion $V_{K}$-module and hence compact in $\gam_{\fm}(\StMod V_{K})$. We claim that
\[
 \rank_{F}\sEnd_{V_{K}}(N) = 2n \quad\text{and}\quad \rank_{F}\sEnd_{V}(N\da_{V}) = n^{2}\,,
\]
and hence that the map $\sEnd_{V_{K}}(N)\to \sEnd_{V}(N\da_{V})$ is not surjective when $n\ge 3$.

The claim can be checked as follows: Set $S:=\sEnd^{*}_{V_{K}}(K)_{\fm}\cong K[a,b]_{\fm}$. Since $(b-at)^{n}$ is not a zerodivisor on $S$,  applying $\sHom_{V_{K}}(K,-)$ to the exact triangle
\[
K \xra{ (b-at)^{n}} \Omega^{-n} K \lra N \lra
\]
one gets that $\sHom^{*}_{V_{K}}(K,N)$ is isomorphic to $S/(b-at)^{n}$, as an $S$-module; in particular $(b-at)^{n}$ annihilates it. Given this, applying $\sHom_{V_{K}}(-,N)$ to the exact triangle above yields that the rank of $\sEnd_{V_{K}}(N)$, as an $F$-vector space, is $n$.

As to the claim about $N\da_{V}$: the category $\gam_{\fp}(\StMod V)$ is semisimple for its generator $k_{\fp}$ has the property that $\sEnd_{V}^{*}(k_{\fp})$ is a graded field. It thus suffices to verify that $N\da_{V}\cong k_{\fp}^{n}$; equivalently, that 
$\rank_{F}\sHom_{V}(k_{\fp},N\da_{V})=n$. This follows from the isomorphisms
\[
\sHom_{V}(k_{\fp},N\da_{V}) \cong \sHom_{V}(k,N\da_{V})\cong \sHom_{V_{K}}(K,N) \cong F^{n}\,.
\]
The first one holds because $N\da_{V}$ is $\fp$-local, the second one is by adjunction.

There is a close connection between this example and Example~\ref{ex:dvr}. Namely,  the Bernstein-Gelfand-Gelfand correspondence sets up an equivalence between $\StMod V$ and the derived category of dg modules over $R:=k[a,b]$, viewed as a dg algebra with zero differential, modulo the subcategory of $(a,b)$-torsion dg modules; see, for example, \cite[\S{5.2.2}]{Benson/Iyengar/Krause:2012a}. The BGG correspondence induces the equivalences in the following commutative diagram
of categories.
\[
\begin{tikzcd}
\gam_{\fm}(\StMod V_{K})  \arrow[d, swap, "(-)\da_{V}"]  & \arrow[l,swap,"\simeq"] \gam_{\fm}(\sfD(S)) \arrow[d] \\
\gam_{\fp}(\StMod V)   & \arrow[l,swap,"\simeq"] \sfD(R_{\fp}) 
\end{tikzcd}
\]
where $\sfD(-)$ denotes the derived category of dg modules. The functor on the right is restriction of scalars along the homomorphism of rings $R_{\fp}\to S$, which is induced by the inclusion $R=k[a,b]\subset K[a,b]$. Under the BGG equivalence, the $V_{K}$-module $N$ corresponds to $S/(b-at)^{n}$, viewed as dg $S$-module with zero differential. Since $R_{\fp}$ is a graded field, isomorphic to $K[a^{\pm 1}]$, each dg $R_{\fp}$-module is isomorphic to a direct sum of copies of $R_{\fp}$.  Arguing as in Example~\ref{ex:dvr} one can verify that the dg $S$-module $S/(b-at)^{n}$ restricts to a direct sum of $n$ copies of $R_{\fp}$. This is another way to compute the endomorphism rings in question.
\end{example}

The remainder of this section is devoted to a further discussion of the compact objects in $\gam_{\fp}(\StMod G)$. This is not needed in
the sequel.

\subsection*{Endofiniteness}

Following Crawley-Boevey \cite{Crawley-Boevey:1991a,
Crawley-Boevey:1992a}, a module $X$ over an associative ring $A$ is
\emph{endofinite} if $X$ has finite length as a module over
$\End_A(X)$.

An object $X$ of a compactly generated triangulated category $\sfT$ is
\emph{endofinite} if the $\End_\sfT(X)$-module $\Hom_\sfT(C,X)$ has
finite length for all $C\in\sfT^c$; see
\cite{Krause/Reichenbach:2000a}.

Let $A$ be a self-injective algebra, finite dimensional over some
field. Then an $A$-module is endofinite if and only if it is
endofinite as an object of $\StMod A$. This follows from the fact $X$
is an endofinite $A$-module if and only if the $\End_A(X)$-module
$\Hom_A(C,X)$ has finite length for every finite dimensional
$A$-module $C$.

\begin{lemma}
\label{le:endofinite} Let $F\colon \sfT\to\sfU$ be a functor between
compactly generated triangulated categories that preserves products
and coproducts.  Let $X$ be an object in $\sfT$. If $X$ is endofinite,
then so is $FX$ and the converse holds when $F$ is fully faithful.
\end{lemma}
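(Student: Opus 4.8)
The plan is to construct the left adjoint of $F$ and to transport the length condition through it; the one genuinely non-formal input is that this adjoint preserves compactness, and that is exactly where the hypothesis that $F$ preserves coproducts is used. Since $\sfT$ is compactly generated it satisfies Brown representability for the dual, so the product-preserving functor $F$ has a left adjoint $E\colon\sfU\to\sfT$. Because $F$ preserves coproducts, a one-line computation with the adjunction isomorphisms shows that $E$ carries $\sfU^{c}$ into $\sfT^{c}$. I will also use the compatibility that the adjunction isomorphism $\Hom_\sfU(D,FX)\cong\Hom_\sfT(ED,X)$ is semilinear: post-composition by $\varphi\in\End_\sfT(X)$ on the right corresponds to post-composition by $F(\varphi)$ on the left, so the left-hand group, viewed as a module over $\End_\sfT(X)$ through the canonical ring map $\End_\sfT(X)\to\End_\sfU(FX)$, is isomorphic to the right-hand one as such.

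For the first assertion, suppose $X$ is endofinite and fix $D\in\sfU^{c}$. Then $ED$ is compact, so $\Hom_\sfT(ED,X)$ has finite length over $\End_\sfT(X)$; by the compatibility above $\Hom_\sfU(D,FX)$ has finite length over $\End_\sfT(X)$ acting through $F$, hence also over the a priori larger ring $\End_\sfU(FX)$, because every $\End_\sfU(FX)$-submodule is in particular an $\End_\sfT(X)$-submodule. As $D$ was arbitrary, $FX$ is endofinite.

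For the converse, add the assumption that $F$ is fully faithful. The obstacle I expect to be the crux is that one cannot simply invoke $\Hom_\sfT(C,X)\cong\Hom_\sfU(FC,FX)$ for a compact $C$ of $\sfT$, since $FC$ need not be compact in $\sfU$; the way around it is to go through $E$ again. Full faithfulness makes the counit $EF\to\id_\sfT$ invertible, so $E$ is essentially surjective and $\End_\sfT(X)\to\End_\sfU(FX)$ is an isomorphism. Since $E$ preserves coproducts and $\sfU=\Loc(\sfU^{c})$, the objects $ED$ with $D\in\sfU^{c}$ form a set of compact generators of $\sfT$, closed under suspension. Consider the full subcategory $\mcC$ of $\sfT^{c}$ consisting of those $C$ for which $\Hom_\sfT(C,\Si^{i}X)$ has finite length over $\End_\sfT(X)$ for every $i\in\bbZ$; applying $\Hom_\sfT(-,\Si^{i}X)$ to an exact triangle, and using additivity of length on short exact sequences together with the fact that a summand of a finite-length module has finite length, one checks that $\mcC$ is a thick subcategory. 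Assuming $FX$ endofinite, every generator lies in $\mcC$: for $D\in\sfU^{c}$ and $i\in\bbZ$ one has $\Hom_\sfT(ED,\Si^{i}X)\cong\Hom_\sfU(\Si^{-i}D,FX)$, which has finite length over $\End_\sfU(FX)$ because $\Si^{-i}D$ is compact, hence finite length over $\End_\sfT(X)$ via the above isomorphism. Since a thick subcategory of $\sfT^{c}$ containing a set of compact generators is all of $\sfT^{c}$, we get $\mcC=\sfT^{c}$, and taking $i=0$ shows $X$ is endofinite.
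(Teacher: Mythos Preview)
Your proof is correct and follows the same approach as the paper: construct the left adjoint via Brown representability, observe it preserves compactness because $F$ preserves coproducts, and transport finite length through the adjunction isomorphism. For the converse the paper is more concise, noting directly that every compact object of $\sfT$ is a summand of some $E(D)$ with $D\in\sfU^{c}$ (since the essential image of $E\!\restriction_{\sfU^{c}}$ is already closed under shifts and cones, its thick closure---which equals $\sfT^{c}$ by Neeman's theorem---is just its summand closure); your thick-subcategory argument for $\mcC$ is a valid alternative route to the same conclusion.
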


\begin{proof} 
By Brown representability, $F$ has a left adjoint, say $F'$.  It preserves compactness, as $F$ preserves coproducts. For $X\in\sfT$ and $C\in\sfU^c$, there is an isomorphism
\[ 
\Hom_\sfU(C,FX)\cong\Hom_\sfT(F'C,X)
\] 
of $\End_{\sfT}(X)$-modules. Thus if $X$ is endofinite, then $\Hom_\sfU(C,FX)$ is a module of finite length over $\End_\sfT(X)$, and therefore also over $\End_\sfU(FX)$. For the converse, observe that each compact object in $\sfT$ is isomorphic to a direct summand of an object of the form $F'C$ for some $C\in\sfU^c$.
\end{proof}

\begin{proposition}
\label{pr:endofinite} Let $\fp$ be a point in $\Proj H^*(G,k)$ and $M$
a $G$-module that is compact in $\gam_{\fp}(\StMod G)$.  Then $M$ is
endofinite both in $\StMod G$ and in $\gam_{\fp}(\StMod G)$.
\end{proposition}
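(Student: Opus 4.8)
The plan is to reduce to a closed point via Theorem~\ref{th:realisability} and then appeal to endofiniteness of finite dimensional modules over a self-injective finite dimensional algebra. First I would invoke Theorem~\ref{th:realisability}: there is a purely transcendental extension $K/k$ and a closed point $\fm$ in $\Proj H^*(G_K,K)$ lying over $\fp$ such that restriction of scalars $(-)\da_G$ sends $\gam_\fm(\StMod G_K)^c$ densely into $\gam_\fp(\StMod G)^c$. By Corollary~\ref{co:realisability}, the given compact object $M$ of $\gam_\fp(\StMod G)$ is (stably isomorphic to) $N\da_G$ for some finite dimensional $G_K$-module $N$ in $\gam_\fm(\StMod G_K)$.

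Next I would observe that $N$, being finite dimensional over $K$, is certainly endofinite as a $G_K$-module, hence endofinite as an object of $\StMod G_K$ (this is the remark preceding Lemma~\ref{le:endofinite}, applied to the self-injective finite dimensional algebra $KG_K$). Since $N$ lies in $\gam_\fm(\StMod G_K)$, which is a localising subcategory on which the inclusion into $\StMod G_K$ preserves products and coproducts, Lemma~\ref{le:endofinite} (with $F$ the fully faithful inclusion) gives that $N$ is endofinite in $\gam_\fm(\StMod G_K)$ as well. I would then apply Lemma~\ref{le:endofinite} once more, this time to the restriction functor $(-)\da_G\colon \StMod G_K\to\StMod G$, which preserves both products and coproducts (it has both adjoints, namely $K\otimes_k(-)$ and the coinduction functor). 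It follows that $N\da_G\cong M$ is endofinite in $\StMod G$.

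Finally, for endofiniteness of $M$ in $\gam_\fp(\StMod G)$: the inclusion $\gam_\fp(\StMod G)\hookrightarrow \StMod G$ preserves coproducts and products and is fully faithful, so the converse direction of Lemma~\ref{le:endofinite} transfers endofiniteness in $\StMod G$ back to $\gam_\fp(\StMod G)$. Alternatively, one can bypass this last step by noting that restriction $(-)\da_G$ also restricts to a coproduct- and product-preserving functor $\gam_\fm(\StMod G_K)\to\gam_\fp(\StMod G)$ and apply the forward direction of Lemma~\ref{le:endofinite} directly in the local categories. The main point requiring care is the verification that the relevant functors preserve both products and coproducts, so that Lemma~\ref{le:endofinite} applies in each case; this is where one uses that $(-)\da_G$ has adjoints on both sides and that the local subcategories are closed under products as well as coproducts. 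Everything else is a formal chaining of the lemmas together with Theorem~\ref{th:realisability}.
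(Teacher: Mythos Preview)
Your argument for endofiniteness in $\StMod G$ is fine and matches the paper's. The gap is in the passage to $\gam_\fp(\StMod G)$. You assert that the inclusion $\gam_\fp(\StMod G)\hookrightarrow \StMod G$ preserves products, equivalently that the class of $\fp$-local $\fp$-torsion objects is closed under products in $\StMod G$. This is false in general: while $\fp$-locality is preserved under products (it amounts to certain elements of $H^*(G,k)$ acting invertibly), $\fp$-torsion is not. Already for graded $R$-modules with $R=k[x,y]$ and $\fp=(x)$, the modules $k(y)[x]/(x^n)$ are $\fp$-local and $\fp$-torsion, but $\prod_{n\ge 1} k(y)[x]/(x^n)$ is not $\fp$-torsion, since the element $(1,1,\dots)$ is annihilated by no power of $x$. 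So the converse direction of Lemma~\ref{le:endofinite} does not apply to this inclusion. Your alternative route, applying the forward direction of the lemma to the restricted functor $(-)\da_G\colon\gam_\fm(\StMod G_K)\to\gam_\fp(\StMod G)$, runs into the same difficulty: products in these subcategories are not the ambient products, so product-preservation of $(-)\da_G$ on the big categories does not immediately transfer.

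The paper avoids this by interposing $(\StMod G)_\fp$. The inclusion $(\StMod G)_\fp\to\StMod G$ \emph{does} preserve products and coproducts (being $\fp$-local is a condition about isomorphisms, stable under both), so the converse direction of Lemma~\ref{le:endofinite} yields endofiniteness of $M$ in $(\StMod G)_\fp$. For the final step one uses the functor $\gam_{\mcV(\fp)}\colon(\StMod G)_\fp\to\gam_\fp(\StMod G)$ in the \emph{forward} direction of the lemma: it is a right adjoint, hence preserves products, and it also preserves coproducts; since $M$ already lies in $\gam_\fp(\StMod G)$ one has $\gam_{\mcV(\fp)}M\cong M$. The point is that one never needs the inclusion of the torsion subcategory to preserve products.
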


\begin{proof} 
  By Corollary~\ref{co:realisability}, the module $M$ is of the form
  $N\da_G$ for a finite dimensional $G_K$-module $N$.  Clearly, $N$ is
  endofinite in $\StMod G_K$ and $(-)\da_G$ preserves products and
  coproducts, so it follows by Lemma~\ref{le:endofinite} that $M$ is
  endofinite in $\StMod G$. By the same token, as the inclusion
  $(\StMod G)_\fp\to \StMod G$ preserves products and coproducts, $M$
  is endofinite in $(\StMod G)_\fp$ as well. Finally, the functor
  $\gam_{\mcV(\fp)}$ is a right adjoint to the inclusion
  $\gam_\fp(\StMod G)\to (\StMod G)_\fp$. It preserves products, being
  a right adjoint, and also coproducts. Thus $M$ is endofinite in
  $\gam_\fp(\StMod G)$, again by Lemma~\ref{le:endofinite}.
\end{proof}

\section{$G$-modules and Tate duality}

Now we turn to various dualities for modules over finite group
schemes. We begin by recalling the construction of the transpose and
the dual of a module over a finite group scheme, and certain functors
associated with them. Our basic reference for this material is
Skowro\'nski and Yamagata~\cite[Chapter
III]{Skowronski/Yamagata:2011a}.

Throughout $G$ will be a finite group scheme over $k$. We write
$(-)^\vee=\Hom_k(-,k)$.

\subsection*{Transpose and dual} 

Let $G^{\op}$ be the opposite group scheme of $G$; it can be realised
as the group scheme associated to the cocommutative Hopf algebra
$(kG)^{\op}$. Since $kG$ is a $G$-bimodule, the assignment
$M\mapsto \Hom_{G}(M,kG)$ defines a functor
\[ (-)^{t}\col \Mod G \lra \Mod G^{\op}\,.
\] Let now $M$ be a finite dimensional $G$-module.  Give a minimal
projective presentation $P_{1}\xra{f}P_{0}\to M$, the \emph{transpose}
of $M$ is the $G^{\op}$-module $\Tr M:= \Coker(f^{t})$. By
construction, there is an exact sequence of $G^{\op}$-modules:
\[ 0\lra M^{t} \lra P_{0}^{t}\xra{\ f^{t}\ }P_{1}^{t}\lra \Tr M\lra
0\,.
\] The $P_{i}^{t}$ are projective $G^{\op}$-modules, so this yields an
isomorphism of $G^{\op}$-modules
\begin{equation*}
\label{eq:t=omega2} M^{t}\cong \Omega^{2}\Tr M\,.
\end{equation*}

Given a $G^{\op}$-module $N$, the $k$-vector space $\Hom_{k}(N,k)$ has
a natural structure of a $G$-module, and the assignment $N\mapsto
\Hom_{k}(N,k)$ yields a functor
\[ D:=\Hom_{k}(-,k)\col \stmod G^{\op}\lra \stmod G\,.
\]

\subsection*{The Auslander-Reiten translate} 

In what follows we write $\tau$ for the Auslander-Reiten translate of
$G$:
\[ \tau := D\circ \Tr \col \stmod G\to \stmod G
\]

Given an extension of fields $K/k$, for any finite dimensional
$G$-module $M$ there is a stable isomorphism of $G_{K}$-modules
\begin{equation*}
\label{eq:AR-extension} (\tau M)_{K}\cong \tau (M_{K})\,.
\end{equation*}

\subsection*{Nakayama functor}

The Nakayama functor
\[ \nu\col\Mod G\xra{\ \sim\ }\Mod G
\] is given by the assignment
\[ M\mapsto D(kG)\otimes_{kG} M\cong \delta_{G}\otimes_{k} M\,.
\] where $\delta_G=\nu(k)$ is the \emph{modular character} of $G$; see
\cite[I.8.8]{Jantzen:2003a}. Since the group of characters of $G$ is
finite, by \cite[\S2.1 \& \S2.2]{Waterhouse:1979a}, there exists a
positive integer $d$ such that $\delta_{G}^{\otimes d}\cong k$ and
hence as functors on $\Mod G$ there is an equality
\begin{equation}
\label{eq:nu-periodic} \nu^{d} = \id\,.
\end{equation} 
When $M$ is a finite dimensional $G$-module, there are natural stable
isomorphisms
\[ \nu M\cong D(M^t)\cong \Omega^{-2}\tau M\,.
\] When in addition $M$ is projective, one has
\begin{equation*}
\label{eq:Nak-duality}
\Hom_G(M,-)^\vee\cong(M^t\otimes_{kG}-)^\vee\cong\Hom_G(-,\nu M).
\end{equation*}

Let $K/k$ be an extension of fields.  For any $G$-module $M$ there is
a natural isomorphism of $G_{K}$-modules
\begin{equation}
\label{eq:Nak-extension} (\nu M)_{K}\cong \nu (M_{K})\,.
\end{equation} 
This is clear for $M=kG$ since
\[K\otimes_k \Hom_k(kG,k)\cong\Hom_k(kG,K)\cong \Hom_K(K\otimes_k
kG,K)\, ,\] and the general case follows by taking a free presentation
of $M$.

\begin{remark} 
  We have $\delta_G=k$ if and only if the algebra $kG$ is
  symmetric. In particular, $\delta_G=k$ when $G$ is a finite discrete
  group.
\end{remark}

\subsection*{Tate duality} 

For finite groups, the duality theorem below is classical and due to
Tate \cite[Chapter XII, Theorem 6.4]{Cartan/Eilenberg:1956a}.  An
argument for the extension to finite group schemes was sketched in
\cite[\S2]{Benson/Iyengar/Krause/Pevtsova:2016a}, and is reproduced
here for readers convenience.

\begin{theorem}
\label{thm:tate} Let $G$ be a finite group scheme over a field
$k$. For any $G$-modules $M,N$ with $M$ finite dimensional, there are
natural isomorphisms
\[ \sHom_{G}(M,N)^{\vee}\cong \sHom_{G}(N,\Omega^{-1}\tau M)\cong
\sHom_{G}(N,\Omega\nu M)\,.
\]
\end{theorem}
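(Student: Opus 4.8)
**The plan is to deduce Tate duality from the Auslander--Reiten formula for a self-injective algebra, which is the classical source of this result.** First I would recall the general formula of Auslander--Reiten: for a finite-dimensional algebra $A$, a finite-dimensional $A$-module $M$, and an arbitrary $A$-module $N$, there is a natural isomorphism
\[
\Hom_A(N,\tau M)^{\vee}\cong \overline{\Hom}_A(M,N)\,,
\]
where $\overline{\Hom}$ denotes morphisms modulo those factoring through an injective. Applying this with $A=kG$, which is self-injective so that injective $=$ projective, the right-hand side becomes $\sHom_G(M,N)$, and this gives
\[
\sHom_G(M,N)^{\vee}\cong \overline{\Hom}_G(N,\tau M)\,.
\]

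**Next I would promote this to the stable category on both sides.** The subtlety is that the $\Hom$ modulo injectives on the right is computed in $\Mod G$, whereas $\sHom_G(N,\tau M)$ is $\Hom$ modulo projectives; but since $kG$ is self-injective these coincide only after one is careful about which module is finite-dimensional. The standard move is to replace $\tau M$ by $\Omega^{-1}\tau M$ (or shift $M$), using the exact sequence relating $\Tr$ and $\Omega$: from $M^t\cong \Omega^2\Tr M$ recorded in the excerpt, dualizing gives $\nu M\cong D(M^t)\cong \Omega^{-2}\tau M$, equivalently $\Omega\nu M\cong \Omega^{-1}\tau M$. Thus the two expressions on the right of the theorem statement agree, and it remains to identify $\overline{\Hom}_G(N,\tau M)$ with $\sHom_G(N,\Omega^{-1}\tau M)$. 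This follows by a dimension-shift argument: taking a projective cover $0\to \Omega M\to P\to M\to 0$ and applying $\tau=D\Tr$, one compares the two quotients and picks up exactly one syzygy, so $\overline{\Hom}_G(N,\tau M)\cong\sHom_G(N,\Omega^{-1}\tau M)$ naturally in $N$ (this is where finite dimensionality of $M$ is used, to guarantee $\tau M$ is well-behaved and the minimal projective presentation exists).

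**For the second isomorphism in the statement** I would simply invoke the stable isomorphisms $\nu M\cong \Omega^{-2}\tau M$ recorded just above in the excerpt, which gives $\Omega\nu M\cong \Omega^{-1}\tau M$ in $\stmod G$, hence a natural isomorphism $\sHom_G(N,\Omega^{-1}\tau M)\cong \sHom_G(N,\Omega\nu M)$.

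**The main obstacle** will be the bookkeeping around ``modulo projectives'' versus ``modulo injectives'' and the syzygy shift: one must check that the Auslander--Reiten formula, usually stated with $N$ finite-dimensional, extends to arbitrary $N$ (this is standard but should be cited), and that all the isomorphisms involved are genuinely natural in both variables, not merely present objectwise. Since the paper explicitly says this ``can be deduced from a formula of Auslander and Reiten that applies to general associative algebras,'' I would lean on \cite{Auslander:1978a} and \cite{Skowronski/Yamagata:2011a} for the formula and its naturality, and keep the group-scheme-specific input to the single observation that $kG$ is a self-injective (indeed Frobenius) algebra, together with the identities relating $\tau$, $\nu$, $\delta_G$, and $\Omega$ already assembled in this section.
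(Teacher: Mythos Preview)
Your overall strategy---invoke the Auslander--Reiten formula and then a syzygy shift---is the same as the paper's, but the specific AR formula you wrote down is not correct, and this creates the bookkeeping tangle in your second paragraph.

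The form of the AR formula the paper uses (from \cite[Proposition~I.3.4]{Auslander:1978a}, see also \cite[Corollary, p.~269]{Krause:2003a}) is
\[
\sHom_G(M,N)^{\vee}\;\cong\; \Ext^{1}_{G}(N,\tau M)
\]
for $M$ finitely presented and $N$ arbitrary---note the $\Ext^1$ on the right, not $\Hom$ or $\overline{\Hom}$. Your version, $\Hom_A(N,\tau M)^{\vee}\cong \overline{\Hom}_A(M,N)$, is not the standard statement, and in any case passing from it to $\sHom_G(M,N)^{\vee}\cong \overline{\Hom}_G(N,\tau M)$ as you do would require dualising, hence finite dimensionality of $N$, which is exactly what you do not have.

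With the correct formula the argument is immediate and is precisely the paper's: since $kG$ is self-injective there is the standard identification $\Ext^1_G(N,X)\cong \sHom_G(N,\Omega^{-1}X)$, which gives the first isomorphism in the theorem; then $\tau M\cong \Omega^2\nu M$ yields $\Omega^{-1}\tau M\cong \Omega\nu M$ for the second. The ``dimension-shift'' manoeuvre in your second paragraph is an attempt to repair a difficulty created by the mis-stated formula; once you start from the $\Ext^1$ version, that step disappears entirely.
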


\begin{proof} 
  A formula of Auslander and Reiten~\cite[Proposition
  I.3.4]{Auslander:1978a}, see also \cite[Corollary
  p. 269]{Krause:2003a}, yields the first isomorphism below
\[ \sHom_{G}(M,N)^{\vee}\cong \Ext^{1}_{G}(N,\tau M) \cong
\sHom_{G}(N,\Omega^{-1}\tau M)
\] The second isomorphism is standard. It remains to recall that $\tau
M \cong \Omega^{2}\nu M$.
\end{proof}

Restricted to finite dimensional $G$-modules, Tate duality is the
statement that the $k$-linear category $\stmod G$ has Serre duality,
with Serre functor $\Omega\nu$. A refinement of this Serre duality
will be proved in Section~\ref{sec:serre}.

\section{Local cohomology versus injective cohomology}
\label{sec:Duality} 

Let $k$ be a field and $G$ a finite group scheme over $k$. In this
section we establish the main result of this work; it identifies for a
prime ideal $\fp$ in $H^{*}(G,k)$, up to some twist and some
suspension, the local cohomology object $\gam_\fp(k)$ with the
injective cohomology object $T_\fp(k)$.

\begin{theorem}
\label{thm:gorenstein} Fix a point $\fp$ in $\Proj H^{*}(G,k)$ and let
$d$ be the Krull dimension of $H^{*}(G,k)/\fp$. There is a stable
isomorphism of $G$-modules
\[ \gam_{\fp}(\delta_{G}) \cong \Omega^{-d}T_{\fp}(k)\,;
\] equivalently, for any $G$-module $M$ there is a natural isomorphism
\[ \sHom_G(M,\Omega^d \gam_{\fp}(\delta_{G})
)\cong\Hom_{H^*(G,k)}(H^*(G,M),I(\fp))\,.
\]
\end{theorem}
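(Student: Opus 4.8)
The plan is to reduce the statement to the case of a closed point by means of Theorem~\ref{th:realisability} and Theorem~\ref{thm:realisability}, and then to recognise the closed-point case as a reformulation of classical Tate duality (Theorem~\ref{thm:tate}). First I would observe that the two displayed formulas are equivalent: given the isomorphism $\gam_{\fp}(\delta_{G})\cong\Omega^{-d}T_{\fp}(k)$, one combines the defining property \eqref{eq:inj-coh} of $T_{\fp}(k)=T(k,I(\fp))$ with the fact that $\Hom^{*}_{\StMod G}(k,M)\cong\sHom_{G}^{*}(k,M)$ is, after localising away from $\fm$, identified with $H^{*}(G,M)$ (Remark~\ref{re:Tate-coh}), and with the tensor identity $T_{\fp}\cong T_{\fp}(k)\otimes_{k}-$ from \eqref{eq:tensor-identity} to rewrite $\sHom_{G}(M,\Omega^{d}\gam_{\fp}(\delta_{G}))\cong\sHom_{G}(M,\Omega^{d}\Omega^{-d}T_{\fp}(k))=\Hom_{\StMod G}(M,T_{\fp}(k))\cong\Hom_{H^{*}(G,k)}(\sHom^{*}_{G}(k,M),I(\fp))$; the passage from $\sHom^{*}_{G}(k,M)$ to $H^{*}(G,M)$ uses that $I(\fp)$ is $\fp$-local hence $\fm$-local, so the $\fm$-torsion discrepancy in \eqref{eq:Tate-coh} is invisible to $\Hom_{H^{*}(G,k)}(-,I(\fp))$. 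So it suffices to prove the first displayed isomorphism.

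Next I would treat the closed-point case. If $\fp=\fm$ is a closed point then $d=1$, and $\gam_{\fp}=\gam_{\mcV(\fp)}$. Here I would test the claimed isomorphism $\gam_{\fp}(\delta_{G})\cong\Omega^{-1}T_{\fp}(k)$ against compact generators: by \eqref{eq:generators} it is enough to compute $\sHom_{G}^{*}((\kos S{\fp})_{\fp}, -)$ applied to both sides, or more simply to check the defining adjunction \eqref{eq:inj-coh} directly. Since $\fp$ is closed, $H^{*}(G,k)/\fp$ has Krull dimension one, so $k(\fp)$ is a finite extension of $k$ in each degree and $I(\fp)$ is, up to a shift, $\Hom_{k}(H^{*}(G,k)/\!\!/\text{(system of parameters)},k)$-like; concretely one reduces via Koszul objects to finite-dimensional modules, where Tate duality $\sHom_{G}(M,N)^{\vee}\cong\sHom_{G}(N,\Omega\nu M)=\sHom_{G}(N,\Omega(\delta_{G}\otimes_{k}M))$ furnishes exactly the required identification after applying $\gam_{\fp}$ and using that $\gam_{\fp}$ commutes with $\delta_{G}\otimes_{k}-$ by \eqref{eq:tensor-identity}. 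The point is that $T_{\fp}(k)$, restricted to the compact generators of $\gam_{\fp}(\StMod G)$, represents $\fp$-local $k$-linear duality, and Tate duality says $\Omega\delta_{G}$ is the Serre functor on $\stmod G$; localising/torsioning at the closed point $\fp$ turns this into the asserted isomorphism.

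Finally I would bootstrap to a general $\fp$ in $\Proj H^{*}(G,k)$ with $\dim H^{*}(G,k)/\fp=d$. Let $K/k$ and the closed point $\fm\in\Proj H^{*}(G_{K},K)$ lying over $\fp$ be as in Construction~\ref{con:generic}, so $K$ is purely transcendental of degree $d-1$ and $k(\fm)\cong k(\fp)$. By the closed-point case applied to $G_{K}$ we have $\gam_{\fm}(\delta_{G_{K}})\cong\Omega^{-1}T_{\fm}(K)$ in $\StMod G_{K}$. Now restrict scalars: using \eqref{eq:Nak-extension} one has $(\delta_{G_{K}})\da_{G}\cong$ (something built from $\delta_{G}$), and by Theorem~\ref{thm:realisability}, $\gam_{\fp}(\delta_{G})\cong(\gam_{\fm}(\kos K{\bsb})\otimes_{K}\delta_{G_{K}})\da_{G}$ up to the syzygy shift $\Omega^{s}$ coming from the $(d-1)$-element Koszul object $\kos K{\bsb}$; that shift, together with the $\Omega^{-1}$ from the closed-point case, accounts for the exponent $-d=-1-(d-1)$. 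On the other side, I would check that restriction of scalars carries $T_{\fm}(K)$ to $T_{\fp}(k)$: this is where the field isomorphism $k(\fp)\cong k(\fm)$ and the compatibility of injective hulls under the flat map $H^{*}(G,k)\to H^{*}(G_{K},K)$ enter, together with the adjunction between $K\otimes_{k}-$ and $(-)\da_{G}$ and the defining property \eqref{eq:inj-coh}; concretely $\Hom_{K}(\sHom^{*}_{G_{K}}(C_{K},-),I(\fm))$ restricted along $(-)\da_{G}$ becomes $\Hom_{k}$-duality into $I(\fp)$ because $K$ is flat over $k$ and $I(\fm)\cong K\otimes_{k}I(\fp)$ as graded modules after identifying residue fields. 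Assembling these identifications gives $\gam_{\fp}(\delta_{G})\cong\Omega^{-d}T_{\fp}(k)$.

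The main obstacle I expect is the bookkeeping in the last step: correctly matching the two injective cohomology objects $T_{\fm}(K)$ and $T_{\fp}(k)$ under restriction of scalars — i.e.\ proving $T_{\fm}(K)\da_{G}\cong T_{\fp}(k)$ up to the appropriate suspension — since this requires knowing that $I(\fm)$ is obtained from $I(\fp)$ by base change along $H^{*}(G,k)\to H^{*}(G_{K},K)$, which in turn rests on the residue-field isomorphism $k(\fp)\cong k(\fm)$ and on $\fm$ being the unique associated prime of $\fq=\fp'+(\bsb)$ from Construction~\ref{con:generic}, together with keeping all the syzygy shifts consistent. The closed-point case itself is essentially Tate duality once one has reduced to finite-dimensional modules via Koszul objects, so the conceptual content is concentrated in the descent along restriction of scalars.
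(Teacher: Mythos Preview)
Your overall strategy matches the paper's: reduce to the closed-point case via Construction~\ref{con:generic}, and settle the closed-point case by Tate duality together with Lemma~\ref{lem:hull}. The closed-point sketch is essentially right, and your accounting of the shift $-d=-1-(d-1)$ is correct in spirit.

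The gap is in the descent step. Your proposed identification $I(\fm)\cong K\otimes_{k}I(\fp)$ is false: base-changing $I(\fp)$ along the flat map $H^{*}(G,k)\to H^{*}(G_{K},K)$ yields an injective module whose unique associated prime is $\fp'=\fp\, H^{*}(G_{K},K)$, not the closed point $\fm=\sqrt{\fp'+(\bsb)}$. These differ whenever $\fp$ is not already closed, so $K\otimes_{k}I(\fp)$ is a sum of copies of $I(\fp')$, not of $I(\fm)$. Consequently your direct claim $T_{\fm}(K)\da_{G}\cong T_{\fp}(k)$ is not justified by the argument you give.

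The paper keeps the Koszul object $\kos K{\bsb}$ in play throughout the descent. The key intermediate claim is
\[
(\kos{T_{\fm}(K)}{\bsb})\da_{G}\cong\Omega^{-d+1}T_{\fp}(k)\,,
\]
proved by testing against a $\fp$-local $\fp$-torsion $G$-module $M$: adjunction and \eqref{eq:kos-swap} give
\[
\sHom_{G}(M,\Omega^{d-1}(\kos{T_{\fm}(K)}{\bsb})\da_{G})\cong\Hom_{H^{*}(G_{K},K)}(\sHom^{*}_{G_{K}}(K,\kos{M_{K}}{\bsb}),I(\fm))\,.
\]
The crucial point is that $\kos{M_{K}}{\bsb}$, being both $\fp'$-torsion and $(\bsb)$-torsion, is $\fm$-torsion; one may then invoke Lemma~\ref{lem:hull2}, which says that for any $\fm$-torsion $H^{*}(G_{K},K)$-module $N$ one has $\Hom_{H^{*}(G_{K},K)}(N,I(\fm))\cong\Hom_{H^{*}(G,k)}(N,I(\fp))$. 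This is the correct replacement for your base-change claim---it transports Matlis duality along restriction of scalars, but only on $\fm$-torsion modules, which is precisely why the Koszul object cannot be dropped. Combining with Theorem~\ref{thm:realisability}, which gives $(\kos{M_{K}}{\bsb})\da_{G}\cong M$, finishes the verification; the final assembly then proceeds as you outlined.
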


When $G$ is the group scheme arising from a finite group the result
above was proved by Benson and
Greenlees~\cite[Theorem~2.4]{Benson/Greenlees:2008a} using Gorenstein
duality for cochains on $BG$, the classifying space of $G$.
Benson~\cite[Theorem 2]{Benson:2008a} gave a different proof by
embedding $G$ into a general linear group and exploiting the fact that
its cohomology ring is a polynomial ring, as was proved by
Quillen. There is an extension of these results to compact Lie groups;
see \cite[Theorem~6.10]{Benson/Greenlees:2014a}, and recent work of
Barthel, Heard, and
Valenzuela~\cite[Proposition~4.33]{Barthel/Heard/Valenzuela:2016a}.

Theorem~\ref{thm:gorenstein} is established using (by necessity)
completely different arguments, thereby giving yet another proof in
the case of finite groups that is, in a sense, more elementary than
the other ones for it is based on classical Tate duality.

A caveat: In \cite{Benson:2008a, Benson/Greenlees:2008a} it is
asserted that $\gam_{\fp}(k)\cong \Omega^{d}T_{\fp}(k)$. However, this
is incorrect and the correct shift is the one in the preceding
theorem. We illustrate this by computing these modules directly for
the quaternions.

\begin{example}
\label{exa:Q8} Let $G:=Q_{8}$, the quaternions, viewed as a group
scheme over a field $k$ of characteristic $2$. In this case
$\delta_{G}=k$, the trivial character. The cohomology algebra of $G$
is given by
\[ H^{*}(G,k) = k[z]\otimes_{k}B \quad \text{where
$B=k[x,y]/(x^{2}+xy+y^{2},x^{2}y+xy^{2})$}\,,
\] with $|x|=1=|y|$ and $|z|=4$; see, for instance,
\cite[p.~186]{Benson:1984a}. Thus $\Proj H^{*}(G,k)$ consists of a
single point, namely $\fm:=(x,y)$.  In particular, $\gam_{\fm}k = k$,
in $\StMod G$.

Next we compute $I(\fm)$ as a module over $H^{*}(G,k)_{\fm} \cong
k[z^{\pm1}] \otimes_{k} B$, using Lemma~\ref{lem:hull2}. The extension
$k[z^{\pm 1}]\subseteq k[z^{\pm1}]\otimes_{k}B$ is evidently finite
(and hence also residually finite). Since $\fm\cap k[z^{\pm1}]=(0)$
and $k[z^{\pm 1}]$ is a graded field, from Lemma~\ref{lem:hull2} one
gets an isomorphism of $H^{*}(G,k)_{\fm}$-modules.
\begin{align*} I(\fm) &\cong
\Hom_{k[z^{\pm1}]}(k[z^{\pm1}]\otimes_{k}B,k[z^{\pm1}]) \\ &\cong
k[z^{\pm1}]\otimes_{k}\Hom_{k}(B,k) \\ &\cong k[z^{\pm 1}]\otimes_{k}
\Sigma^{3}B \\ &\cong \Sigma^{3} H^{*}(G,k)_{\fm}
\end{align*} This yields the first isomorphism below of $G$-modules
\[ T_{\fm}(k) \cong \Omega^{-3} k \cong \Omega^{1} k\, ,
\] and the second one holds because $\Omega^{4}k\cong k$ in $\StMod
G$.
\end{example}

\begin{proof}[Proof of Theorem~\ref{thm:gorenstein}] It follows from
\eqref{eq:Tate-local} that for any $\fp$-local $H^{*}(G,k)$-module
$I$, there is an isomorphism
\[ \Hom_{H^*(G,k)}(H^*(G,M),I)\cong \Hom_{H^*(G,k)}(\sHom_G^*(k,M),I).
\] Consequently, one can rephrase the defining isomorphism
\eqref{eq:inj-coh} for the object $T_\fp(k)$ as a natural isomorphism
\[ \sHom_G(M,T_\fp(k) )\cong\Hom_{H^*(G,k)}(H^*(G,M),I(\fp))\,.
\] Therefore, the main task is to prove that there is a stable
isomorphism:
\[ \gam_{\fp}(\nu k)\cong \Omega^{-d}T_{\fp}(k)\,.
\] Recall that $\nu k =\delta_{G}$.

We first verify the isomorphism above for closed points of $\Proj
H^{*}(G,k)$ and then use a reduction to closed points. The proof uses
the following simple observation: For any $G$-modules $X$ and $Y$ that
are $\fp$-local and $\fp$-torsion, there is an isomorphism $X\cong Y$
in $\StMod G$ if and only if there is a natural isomorphism
\[ \sHom_G(M,X)\cong \sHom_G(M,Y)
\] for $\fp$-local and $\fp$-torsion $G$-modules $M$. This follows
from Yoneda's lemma.

\begin{claim} 
  The desired isomorphism holds when $\fm$ is a closed point.
\end{claim}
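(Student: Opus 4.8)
The plan is to exploit Theorem~\ref{thm:tate} (classical Tate duality) together with the fact that at a closed point $\fm$ the local cohomology functor $\gam_{\fm}$ agrees with the Koszul construction on $\fm$, and that $\fm$ can be cut out by a finite system of parameters of $H^{*}(G,k)$. First I would note that $d=\dim H^{*}(G,k)/\fm$, and since $\fm$ is closed this dimension is $1$ (recall $\fm\notin\Spec$ cannot equal $H^{\ges1}(G,k)$ by Lemma~\ref{le:not-m}, so $H^{*}(G,k)/\fm$ has dimension exactly one). Thus the target shift is $\Omega^{-1}$. Next, using the simple Yoneda-type observation recorded just before the claim, it suffices to produce, for every $\fp$-local $\fp$-torsion (here $\fm$-local $\fm$-torsion) $G$-module $M$, a natural isomorphism
\[
\sHom_{G}(M,\Omega\,\gam_{\fm}(\nu k))\;\cong\;\sHom_{G}(M,T_{\fm}(k))\;\cong\;\Hom_{H^{*}(G,k)}(H^{*}(G,M),I(\fm))\,,
\]
the second isomorphism being the rephrased defining property of $T_{\fm}$ already established at the start of the proof of Theorem~\ref{thm:gorenstein}. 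So the real content is to identify $\sHom_{G}(M,\Omega\gam_{\fm}(\nu k))$ with $\Hom_{H^{*}(G,k)}(H^{*}(G,M),I(\fm))$ for such $M$.

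The key computational step is to resolve $\gam_{\fm}(\nu k)$ via a Koszul complex. Choose a homogeneous element $r\in H^{*}(G,k)$ of positive degree whose radical is $\fm$ on $\Proj$ — more precisely, a single parameter (or finite system $\bsr$) with $\sqrt{(\bsr)}=\fm$ inside $\Proj H^{*}(G,k)$ — so that $\gam_{\fm}(-)\cong\gam_{\mcV(\bsr)}(-)$ is computed by the stable homotopy colimit of the Koszul objects $\kos{(-)}{r^{n}}$. Then, applying $\sHom_{G}(M,-)$ to $\Omega\gam_{\fm}(\nu k)$ and using the swap identity \eqref{eq:kos-swap} together with Tate duality
\[
\sHom_{G}(M,\Omega\nu M')^{\vee}\;\cong\;\sHom_{G}(M',M)
\]
(for $M'$ finite dimensional), one converts $\sHom_{G}(M,\Omega\gam_{\fm}(\nu k))$ into a limit/colimit of $k$-duals of $\sHom^{*}_{G}(k,\kos M{r^{n}})$-type groups. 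Running through the colimit over $n$, the Koszul system on $\sHom^{*}_{G}(k,M)=H^{*}(G,M)$ (valid up to the $\fm$-torsion discrepancy of Remark~\ref{re:Tate-coh}, which is harmless after localising at $\fm$) produces precisely the local cohomology module $H^{1}_{\fm}(H^{*}(G,M))$ in degree one — here is where $d=1$ enters the shift — and classical graded local duality over the one-dimensional (after localisation) ring $H^{*}(G,k)_{\fm}$ identifies $\Hom_{k}$ of this with $\Hom_{H^{*}(G,k)}(H^{*}(G,M),I(\fm))$. One should carry this out first for $M$ finite dimensional (where Tate duality applies directly and $H^{*}(G,M)$ is finitely generated by Remark~\ref{re:fin-gen}), then pass to general $\fm$-local $\fm$-torsion $M$ by writing such $M$ as a homotopy colimit of objects of the form $(\kos{C}{\fm})_{\fm}$ with $C\in\stmod G$, using \eqref{eq:generators}, and checking both sides send these colimits to the appropriate limits.

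The main obstacle I expect is bookkeeping the suspensions and signs so that the shift comes out as $\Omega^{-d}=\Omega^{-1}$ rather than $\Omega^{+1}$ (this is exactly the erroneous sign flagged in the caveat before Example~\ref{exa:Q8}), and making the naturality genuinely compatible across the Koszul colimit. A secondary subtlety is that stable cohomology $\sHom^{*}_{G}$ and ordinary $\Ext$ differ by $\fm$-torsion (Remark~\ref{re:Tate-coh}); one must either work consistently $\fm$-locally, where \eqref{eq:Tate-local} makes them agree, or carry the discrepancy term through and check it dies in the colimit. Once the closed-point case is in hand, the reduction to closed points announced after the claim — via Theorem~\ref{thm:realisability}, restricting scalars along $k\subset K$ and using $k(\fm)\cong k(\fp)$ so that $I(\fm)$ and $I(\fp)$ correspond — transports the isomorphism down to an arbitrary $\fp\in\Proj H^{*}(G,k)$, with the Krull dimension $d$ of $H^{*}(G,k)/\fp$ reappearing because $K/k$ was chosen of transcendence degree $d-1$.
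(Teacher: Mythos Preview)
Your plan is considerably more elaborate than necessary, and in one place it goes astray. The paper's argument is a five-line computation: since $M$ is $\fm$-local and $\fm$-torsion, one has
\[
\sHom_{G}(M,\Omega\gam_{\fm}(\nu k)) \cong \sHom_{G}(M,\Omega\nu k)
\]
immediately---no Koszul resolution of $\gam_{\fm}(\nu k)$ is needed, because $\gam_{\fm}$ is (the composite of) adjoints to inclusions and $M$ already lies in the target subcategory. Then Tate duality (Theorem~\ref{thm:tate}) with the finite-dimensional module $k$ in the first slot gives $\sHom_{G}(k,M)^{\vee}$, and Lemma~\ref{lem:hull} converts the $k$-linear dual into $\Hom_{R}(-,I(\fm))$ for the $\fm$-torsion $R$-module $\sHom_{G}^{*}(k,M)$. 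That is the whole argument; the shift $d=1$ never appears via any local-cohomology degree, it is simply the $\Omega$ already present in Tate duality.

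The specific problem in your outline is the step ``the Koszul system on $H^{*}(G,M)$ produces $H^{1}_{\fm}(H^{*}(G,M))$ and classical graded local duality identifies its $k$-dual with $\Hom_{H^{*}(G,k)}(H^{*}(G,M),I(\fm))$''. For the $M$ you care about---$\fm$-local and $\fm$-torsion---the $H^{*}(G,k)$-module $H^{*}(G,M)$ is itself $\fm$-torsion, so $H^{i}_{\fm}(H^{*}(G,M))=0$ for $i>0$ and your $H^{1}_{\fm}$ vanishes. Moreover, ``classical local duality'' in the Grothendieck sense requires the ring to be Cohen--Macaulay (or to have a dualising complex in hand), and $H^{*}(G,k)_{\fm}$ need not be. What actually does the work is not local duality for the ring but the elementary Matlis-type statement of Lemma~\ref{lem:hull}: for $\fm$-torsion modules, $\Hom_{k}(-,k)\cong\Hom_{R}(-,I(\fm))$. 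Once you see that, and that $\gam_{\fm}$ can be dropped at the outset, the Koszul colimit, the finite-dimensional-first reduction, and the local-cohomology bookkeeping all become unnecessary.
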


Set $A:=H^{*}(G,k)$ and $R:=A_{\fm}$. The injective hull, $I(\fm)$, of
the $A$-module $A/\fm$ is the same as that of the $R$-module $k(\fm)$,
viewed as an $A$-module via restriction of scalars along the
localisation map $A\to R$. Thus $I(\fm)$ is the module $I$ described
in Lemma~\ref{lem:hull}. Let $M$ be a $G$-module that is $\fm$-local
and $\fm$-torsion. The claim is a consequence of the following
computation:
\begin{align*} \sHom_{G}(M, \Omega\gam_{\fm}(\nu k)) & \cong
\sHom_{G}(M, \Omega\nu k) \\ & \cong \sHom_{G}(k,M)^{\vee} \\ & \cong
\Hom_{R}(\sHom_{G}^{*}(k,M),I(\fm)) \\ & \cong
\Hom_{A}(\sHom_{G}^{*}(k,M),I(\fm)) \\ & \cong \sHom_G(M,T_\fm(k))
\end{align*} 
The first isomorphism holds because $M$ is $\fm$-torsion; the second
is Tate duality, Theorem~\ref{thm:tate}, and the next one is by
Lemma~\ref{lem:hull}, which applies because $\sHom_{G}^{*}(k,M)$ is
$\fm$-local and $\fm$-torsion as an $A$-module.

\medskip

Let $\fp$ be a point in $\Proj H^{*}(G,k)$ that is not closed, and let
$K$, $\bsb$, and $\fm$ be as in Construction~\ref{con:generic}.
Recall that $\fm$ is a closed point in $H^{*}(G_{K},K)$ lying over
$\fp$.

\begin{claim} 
  There is a stable isomorphism of $G$-modules
\begin{equation}
\label{eq:claim-T-module} (\kos {T_{\fm}(K)}{\bsb})\da_{G} \cong
\Omega^{-d+1} T_{\fp}(k)
\end{equation} 
where $d$ is the Krull dimension of $H^{*}(G,k)/\fp$.
\end{claim}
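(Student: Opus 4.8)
The plan is to reduce the statement about $T_{\fp}(k)$, an object built over $H^{*}(G,k)$, to the corresponding statement about $T_{\fm}(K)$ over $H^{*}(G_{K},K)$, using the Koszul construction $\kos{-}{\bsb}$ as the bridge. The key is the defining adjunction \eqref{eq:inj-coh}, which characterises $T_{\fm}(K)$ and $T_{\fp}(k)$ up to isomorphism via the functors they represent. So I would test \eqref{eq:claim-T-module} by pairing both sides with an arbitrary $\fp$-local $\fp$-torsion $G$-module $M$ and checking that the induced maps $\sHom_{G}(M,-)$ agree naturally; by the Yoneda-type observation recorded just before the first claim in the proof of Theorem~\ref{thm:gorenstein}, this suffices to get a stable isomorphism, provided I also know both sides are $\fp$-local and $\fp$-torsion. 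The right-hand side $\Omega^{-d+1}T_{\fp}(k)$ is $\fp$-local $\fp$-torsion by construction; for the left-hand side one uses that $\kos{T_{\fm}(K)}{\bsb}$ is $\fm$-torsion over $H^{*}(G_{K},K)$ together with Theorem~\ref{thm:realisability} (or \cite[Proposition~6.2]{Benson/Iyengar/Krause/Pevtsova:2015b}) to see that restriction lands in $\gam_{\fp}(\StMod G)$.

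The computation itself I would run as follows. Starting from $\sHom_{G}(M, (\kos{T_{\fm}(K)}{\bsb})\da_{G})$, apply the isomorphism \eqref{eq:kos-swap} to move the Koszul object to the other side of the $\Hom$, turning this (up to a suspension by $\Omega^{n+s}$, $n=d-1$, $s=\sum|b_{i}|$) into $\sHom_{G}(\Omega^{n+s}\kos{M}{\bsb}, T_{\fm}(K)\da_{G})$. Next use the adjunction between restriction $(-)\da_{G}$ and extension of scalars $K\otimes_{k}(-)$, i.e. $\sHom_{G}(-, X\da_{G})\cong \sHom_{G_{K}}((-)_{K}, X)$, to rewrite this as $\sHom_{G_{K}}((\Omega^{n+s}\kos{M}{\bsb})_{K}, T_{\fm}(K))$; note $(\kos{M}{\bsb})_{K}\cong \kos{M_{K}}{\bsb}$ since the $b_{i}$ already live over $K$. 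Now invoke the defining property \eqref{eq:inj-coh} of $T_{\fm}(K)$ to turn this into $\Hom_{H^{*}(G_{K},K)}\big(\sHom_{G_{K}}^{*}(K, (\Omega^{n+s}\kos{M}{\bsb})_{K}), I(\fm)\big)$. The remaining work is bookkeeping: using \eqref{eq:periodicity} to absorb the $\Omega^{s}$ twist (the $a_{i}$, hence the relevant degrees, act invertibly on $\fp$-local objects), the isomorphism $k(\fp)\cong k(\fm)$ and the finite extension $H^{*}(G,k)/\fp \hookrightarrow H^{*}(G_{K},K)/\fm$ to compare $I(\fm)$ with $I(\fp)$ (via a Lemma~\ref{lem:hull2}-type statement relating injective hulls along a finite residually-finite extension), and Theorem~\ref{thm:realisability} again to rewrite $\sHom_{G_{K}}^{*}(K,(\kos{M}{\bsb})_{K})$ back in terms of $\sHom_{G}^{*}(k, \gam_{\fp}M)\cong \sHom^{*}_{G}(k,M)$ with an index shift by $d$. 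Putting the pieces together should yield $\Hom_{H^{*}(G,k)}(\sHom_{G}^{*}(k,M), I(\fp))$ up to the suspension $\Omega^{-d+1}$, which is exactly $\sHom_{G}(M, \Omega^{-d+1}T_{\fp}(k))$ by \eqref{eq:inj-coh} rephrased as in the proof above.

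I expect the main obstacle to be the careful tracking of suspensions and the precise identification of $I(\fm)$ with a (shift of) $I(\fp)$ as graded modules. The shift count is where the earlier papers went wrong (cf.\ the caveat and Example~\ref{exa:Q8}), so the arithmetic with $\Omega^{n+s}\kos{M}{\bsb}$ in \eqref{eq:kos-swap}, the $\Omega^{s}$-periodicity from \eqref{eq:periodicity}, and the $d$ versus $d-1$ discrepancy (the Koszul sequence $\bsb$ has $d-1$ elements, but $\dim H^{*}(G,k)/\fp = d$) must be done with real care; a sign or degree error here propagates to the wrong twist in the final Theorem~\ref{thm:gorenstein}. A secondary point requiring attention is ensuring naturality in $M$ throughout — each isomorphism used (\eqref{eq:kos-swap}, the restriction/extension adjunction, \eqref{eq:inj-coh}, \eqref{eq:periodicity}) is natural, but one should check the Yoneda argument applies, i.e.\ that the composite isomorphism is natural on the subcategory of $\fp$-local $\fp$-torsion modules and that this subcategory contains enough objects (it contains $(\kos{S}{\fp})_{\fp}$ and generates, by \eqref{eq:generators}).
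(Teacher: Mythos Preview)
Your proposal is essentially the paper's own argument: test both sides against an arbitrary $\fp$-local $\fp$-torsion $M$, pass between $G$ and $G_K$ by adjunction, use \eqref{eq:kos-swap} and \eqref{eq:periodicity} to handle the Koszul object, invoke the defining property \eqref{eq:inj-coh} of $T_\fm(K)$, compare $I(\fm)$ with $I(\fp)$ via Lemma~\ref{lem:hull2}, and use Theorem~\ref{thm:realisability} to identify $(\kos{M_K}{\bsb})\da_G$ with $M$. That is exactly the chain of isomorphisms the paper writes down.

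There is, however, a small but genuine slip in your ordering of the first two steps. You apply \eqref{eq:kos-swap} over $G$ to produce the object $\kos{M}{\bsb}$ as a $G$-module, and then extend to $K$. But the elements $b_i=a_i-a_0t_i$ lie in $H^*(G_K,K)$, not in $H^*(G,k)$, so $\kos{M}{\bsb}$ does not make sense as a $G$-module; your parenthetical ``since the $b_i$ already live over $K$'' is precisely why $(\kos{M}{\bsb})_K$ is ill-typed, not a justification for it. The fix is simply to swap the order: first use adjunction to pass to $\sHom_{G_K}(M_K,\kos{T_\fm(K)}{\bsb})$, and only then apply \eqref{eq:kos-swap} inside $\StMod G_K$. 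This is what the paper does, and after this correction your chain coincides with theirs. Note also that the realisability step (your final use of Theorem~\ref{thm:realisability}) introduces no index shift: the isomorphism is $(\kos{M_K}{\bsb})\da_G\cong M$ on the nose for $\fp$-torsion $M$; the entire $\Omega^{d-1}$ comes from the $n=d-1$ in \eqref{eq:kos-swap}, with the $\Omega^s$ absorbed by \eqref{eq:periodicity}.
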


Let $M$ be a $G$-module that is $\fp$-local and $\fp$-torsion. Then
Theorem~\ref{thm:realisability} applies and yields isomorphisms of
$G$-modules.
\[ (\kos{M_{K}}{\bsb})\da_{G} \cong (M_{K}\otimes_{K} \kos
K{\bsb})\da_{G} \cong M\,.
\] This gives the sixth isomorphism below.
\begin{align*} 
\sHom_{G}(M, \Omega^{d-1}(\kos
{T_{\fm}(K)}{\bsb})\da_{G}) &\cong
\sHom_{G_{K}}(M_{K},\Omega^{d-1}(\kos {T_{\fm}(K)}{\bsb})) \\ &\cong
\sHom_{G_{K}}(\kos{M_{K}}{\bsb},T_{\fm}(K)) \\ &\cong
\Hom_{H^{*}(G_{K},K)}(\sHom_{G_{K}}^{*}(K, \kos{M_{K}}{\bsb}),I(\fm))
\\ &\cong \Hom_{H^{*}(G,k)} (\sHom_{G_{K}}^{*}(K,
\kos{M_{K}}{\bsb}),I(\fp)) \\ &\cong
\Hom_{H^{*}(G,k)}(\sHom_{G}^{*}(k, (\kos{M_{K}}{\bsb})\da_{G}),I(\fp))
\\ &\cong \Hom_{H^{*}(G,k)}(\sHom_{G}^{*}(k,M),I(\fp)) \\ &\cong
\sHom_G(M, T_\fp(k))
\end{align*} 
The first and the fifth isomorphisms are by adjunction. The second
isomorphism is a direct computation using \eqref{eq:kos-swap} and
\eqref{eq:periodicity}. The next one is by definition and the fourth
isomorphism is by Lemma~\ref{lem:hull2} applied to the canonical
homomorphism $H^{*}(G,k)\to H^{*}(G_{K},K)$; note that the
$H^{*}(G_{K},K)$-module $\sHom_{G_{K}}^{*}(K, \kos{M_{K}}{\bsb})$ is
$\fm$-torsion. This justifies the claim.

\medskip

We are now ready to wrap up the proof of the theorem. Since $\fm$ is a
closed point in $\Proj H^{*}(G_{K},K)$, the first claim yields that
the $G_{K}$-modules $\gam_{\fm}(\nu K)$ and $\Omega^{-1} T_{\fm}(K)$
are isomorphic. This then gives an isomorphism of $G_{K}$-modules.
\[ \nu K \otimes_{K} \gam_{\fm}(\kos K{\bsb}) \cong \kos
{\gam_{\fm}(\nu K)}{\bsb} \cong \Omega^{-1} \kos {T_{\fm}(K)}{\bsb}
\] Restricting to $G$ and applying \eqref{eq:claim-T-module} gives the
last of the following isomorphisms of $G$-modules.
\begin{align*} 
\gam_{\fp}(\nu k) &\cong ( (\nu k)_K \otimes_{K}
\gam_{\fm}(\kos K{\bsb} )\da_{G}\\ &\cong (\nu K \otimes_{K}
\gam_{\fm}(\kos K{\bsb}))\da_{G} \\ &\cong \Omega^{-d}T_{\fp}(k)
\end{align*} 
The first one is by Theorem~\ref{thm:realisability} and the second by
\eqref{eq:Nak-extension}.
\end{proof}

The following consequences of Theorem~\ref{thm:gorenstein} were
anticipated in \cite[pp.~203]{Benson:2001a}.  For a graded module
$M=\bigoplus_{n\in\bbZ} M^n$ and $i\in\bbZ$ the twist $M(i)$ is the
graded module with $M(i)^n=M^{n+i}$.

\begin{corollary} 
  Fix $\fp$ in $\Proj H^{*}(G,k)$. With $d$ the Krull dimension of
  $H^{*}(G,k)/\fp$ there are isomorphisms of $H^{*}(G,k)$-modules
\[ H^{*}(G,\gam_{\fp}(k)) \cong
\Hom^{*}_{H^{*}(G,k)}(H^{*}(G,\delta_{G}),I(\fp))(d)
\] and
\[ H^{*}(G,\End_{k}(\gam_{\fp}(k))) \cong (H^{*}(G,k)_{\fp})^{\wedge}
\] where $(-)^{\wedge}$ denotes completion with respect to the
$\fp$-adic topology.
\end{corollary}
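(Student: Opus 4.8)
The plan is to deduce both isomorphisms from Theorem~\ref{thm:gorenstein} by formal manipulations, using that $\delta_G$ is an invertible object of the tensor category $\StMod G$ (it is one‑dimensional, so $\delta_G\otimes_k\delta_G^\vee\cong k$) and the defining property \eqref{eq:inj-coh} of the injective cohomology objects. Write $R:=H^*(G,k)$. Since $\gam_\fp(k)$ and $\End_k(\gam_\fp(k))$ are objects of $\StMod G$, I read $H^*(G,-)$ on them as the Tate cohomology $\sHom^*_G(k,-)$ (as in the unlabelled theorem above and \cite{Benson:2001a}); by Remark~\ref{re:Tate-coh} this choice is immaterial after the localizations that occur, because $I(\fp)$ is $\fp$‑local and $\fp\in\Proj R$.

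Two reductions come first. By the tensor identity \eqref{eq:tensor-identity}, $\gam_\fp\cong\gam_\fp(k)\otimes_k-$, hence $\gam_\fp(\delta_G)\cong\gam_\fp(k)\otimes_k\delta_G$ and so $\gam_\fp(k)\cong\delta_G^\vee\otimes_k\gam_\fp(\delta_G)$. By the closed monoidal structure on $\StMod G$, for any object $X$ one has $\sHom^*_G(k,\delta_G^\vee\otimes_k X)\cong\sHom^*_G(\delta_G,X)$ and $\sHom^*_G(k,\Hom_k(X,X))\cong\sEnd^*_G(X)$, and since $\delta_G^\vee\otimes_k-$ and $\Omega$ are auto‑equivalences also $\sEnd^*_G(\delta_G^\vee\otimes_k X)\cong\sEnd^*_G(X)$ and $\sEnd^*_G(\Omega^{-d}X)\cong\sEnd^*_G(X)$. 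For the first isomorphism I would then run the chain
\begin{align*}
H^*(G,\gam_\fp(k))&\cong\sHom^*_G(\delta_G,\gam_\fp(\delta_G))\cong\sHom^*_G(\delta_G,T_\fp(k))(d)\\
&\cong\Hom^*_R(\sHom^*_G(k,\delta_G),I(\fp))(d)\cong\Hom^*_R(H^*(G,\delta_G),I(\fp))(d),
\end{align*}
where the second isomorphism uses Theorem~\ref{thm:gorenstein} in the form $\gam_\fp(\delta_G)\cong\Omega^{-d}T_\fp(k)$ — the twist $(d)$ being exactly the regrading produced by $\Omega^{-d}$, since $\sHom_G(\delta_G,\Omega^{-n}\Omega^{-d}T_\fp(k))$ is the degree $n+d$ part of $\sHom^*_G(\delta_G,T_\fp(k))$ — the third is the graded form of \eqref{eq:inj-coh} applied to the compact object $k$, and the last replaces $\sHom^*_G(k,\delta_G)$ by $\Ext^*_G(k,\delta_G)=H^*(G,\delta_G)$ inside $\Hom_R(-,I(\fp))$, which is legitimate by Remark~\ref{re:Tate-coh} (the first display in the proof of Theorem~\ref{thm:gorenstein}).

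For the second isomorphism I would argue analogously. The closed structure gives $H^*(G,\End_k(\gam_\fp(k)))\cong\sEnd^*_G(\gam_\fp(k))$; since $\gam_\fp(k)\cong\delta_G^\vee\otimes_k\Omega^{-d}T_\fp(k)$ and the functors involved are auto‑equivalences, this equals $\sEnd^*_G(T_\fp(k))$. Applying \eqref{eq:inj-coh} with $M=T_\fp(k)$ yields $\sEnd^*_G(T_\fp(k))\cong\Hom^*_R(\sHom^*_G(k,T_\fp(k)),I(\fp))$; applying it again with $M=k$, and using $\sEnd^*_G(k)_\fp\cong R_\fp$ (Remark~\ref{re:Tate-coh}) together with $\Hom_R(R,I(\fp))=I(\fp)$, gives $\sHom^*_G(k,T_\fp(k))\cong\Hom^*_R(\sEnd^*_G(k),I(\fp))\cong I(\fp)$. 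Hence $\sEnd^*_G(T_\fp(k))\cong\Hom^*_R(I(\fp),I(\fp))$, and all of these identifications respect composition, so the resulting isomorphism is one of graded $k$‑algebras.

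It remains to identify $\Hom^*_R(I(\fp),I(\fp))$ with $(R_\fp)^{\wedge}$, and this last step is the only non‑formal one: it is the graded analogue of the classical fact that the endomorphism ring of $E_A(A/\fp)$ over a noetherian ring $A$ is the $\fp$‑adic completion of $A_\fp$, and should be quoted from, or proved along the lines of, the structural description of $I(\fp)$ already used in the proof of Theorem~\ref{thm:gorenstein} (Lemmas~\ref{lem:hull} and~\ref{lem:hull2}). Apart from that, the points needing care are the grading conventions — in particular that the shift is $(d)$ and not $(-d)$ — and the bookkeeping between $\sHom^*_G(k,-)$ and $\Ext^*_G(k,-)$, which is controlled throughout by Remark~\ref{re:Tate-coh}.
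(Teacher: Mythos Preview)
Your argument is correct and follows essentially the same route as the paper's own proof: both identify $\sHom^*_G(k,\gam_\fp(k))$ with $\sHom^*_G(\delta_G,\gam_\fp(\delta_G))$ via the invertibility of $\delta_G$, invoke Theorem~\ref{thm:gorenstein} to replace $\gam_\fp(\delta_G)$ by $\Omega^{-d}T_\fp(k)$, and then unwind the definition of $T_\fp(k)$; for the second isomorphism both pass to $\sEnd^*_G(T_\fp(k))\cong\Hom^*_R(I(\fp),I(\fp))\cong(R_\fp)^\wedge$. Your write-up is slightly more explicit about the tensor adjunctions and the Tate-versus-ordinary cohomology bookkeeping, but the structure is the same.
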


\begin{proof} 
  Remark~\ref{re:Tate-coh} will be used repeatedly and without
  comment. Set $R=H^*(G,k)$. Since $\gam_{\fp}(k)$ is $\fp$-local, the
  $R$-modules $H^{*}(G,\gam_{\fp}(k))$ and
  $\sHom_{G}^*(k,\gam_{\fp}(k))$ are isomorphic. The first of the
  stated isomorphisms is a composition of the following isomorphisms
  of $R$-modules.
\begin{align*} 
\sHom^{*}_{G}(k,\gam_{\fp}(k)) &\cong
\sHom^{*}_{G}(\delta_G,\gam_{\fp}(\delta_G))\\ &\cong
\sHom^{*}_{G}(\delta_{G},\Omega^{-d} T_\fp(k)) \\ &\cong
\sHom^{*}_{G}(\delta_{G},T_\fp(k))(d) \\ &\cong
\sHom^{*}_{R}(H^*(G,\delta_{G}),I(\fp))(d)
\end{align*} 
The second isomorphism is by Theorem~\ref{thm:gorenstein}, the one
after is standard, while the last one is by the definition of
$T_{\fp}(k)$.

In the same vein, one has the following chain of isomorphisms.
\begin{align*} 
\sHom^{*}_{G}(k,\End_{k}(\gam_{\fp}(k))) & \cong
\sHom^{*}_{G}(\gam_{\fp}(k),\gam_{\fp}(k)) \\ & \cong
\sHom^{*}_{G}(T_\fp(k),T_\fp(k)) \\ &\cong
\Hom^{*}_{R}(\sHom^{*}_{G}(k,T_\fp(k)),I(\fp))\\ &\cong
\Hom^{*}_{R}(I(\fp),I(\fp)) \\ &\cong (R_{\fp})^{{\wedge}}
\end{align*} 
The second isomorphism is by Theorem~\ref{thm:gorenstein} and the rest
are standard.
\end{proof}

\begin{remark} 
  Another consequence of Theorem~\ref{thm:gorenstein} is that
  $\gam_{\fp}(\delta_{G})$, and hence also $\gam_{\fp}k$, is an
  indecomposable pure injective object in $\StMod G$; see
  \cite[Theorem~5.1]{Benson/Krause:2002a}.
\end{remark}

\section{The Gorenstein property}
\label{sec:gorenstein}

In this section we introduce a notion of a Gorenstein triangulated
category and reinterpret Theorem~\ref{thm:gorenstein} to mean that
$\StMod G$ has this property. The rationale for doing so will become
clear in the next section.
 
\subsection*{The Gorenstein property} 

Let $R$ be a graded commutative noetherian ring and $\sfT$ a compactly
generated $R$-linear triangulated category.  The \emph{support} of
$\sfT$ is by  definition the set
\[\supp_R(\sfT)=\{\fp\in\Spec R\mid \gam_\fp\neq 0\}\,.\]
We say that $\sfT$ is
\emph{Gorenstein} if there is an $R$-linear triangle equivalence
\[ F\col \sfT^{\sfc}\xra{\sim}\sfT^{\sfc}
\] and for every $\fp$ in $\supp_R(\sfT)$ there is an integer $d(\fp)$ and a
natural isomorphism
\[ \gam_\fp\comp F\cong \Si^{d(\fp)} \comp T_\fp
\] of functors $\sfT^{\sfc}\to \sfT$. In this context we call $F$ a
\emph{global Serre functor}, because in
Proposition~\ref{pr:gorenstein-serre} we show that localising at $\fp$
induces a Serre functor $\Si^{-d(\fp)}F_\fp$ in the sense of Bondal
and Kapranov \cite{Bondal/Kapranov:1990a}.

Let now $\sfT=(\sfT,\otimes,\one)$ be a tensor triangulated category
such that $R$ acts on $\sfT$ via a homomorphism of graded rings
$R\to \End_{\sfT}^{*}(\one)$. In that case the Gorenstein property is
captured by the existence of a distinguished object.

\begin{lemma}
\label{le:gor-tt} The $R$-linear triangulated category $\sfT$ is
Gorenstein if and only if there exists a compact object $W$ with the
following properties:
\begin{enumerate}[\quad\rm(1)]
\item There is a compact object $W^{-1}$ such that $W\otimes
W^{-1}\cong \one$;
\item For each $\fp$ in $\supp_R(\sfT)$ there exists an integer
  $d(\fp)$ and an isomorphism
\[ \gam_{\fp}W \cong \Si^{d(\fp)} T_{\fp}(\one)\,.
\]
\end{enumerate}
\end{lemma}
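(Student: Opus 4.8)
The plan is to prove the equivalence in Lemma~\ref{le:gor-tt} by translating the two defining conditions of the Gorenstein property, stated for an arbitrary $R$-linear triangle self-equivalence $F$ of $\sfT^{\sfc}$, into statements about the single object $W := F(\one)$, using the tensor structure. First I would prove the forward implication. Suppose $\sfT$ is Gorenstein, with global Serre functor $F\colon \sfT^{\sfc}\to\sfT^{\sfc}$ and integers $d(\fp)$. Set $W:=F(\one)$; it is compact since $F$ preserves $\sfT^{\sfc}$. Since $F$ is an equivalence, it has a quasi-inverse $F^{-1}$, also an $R$-linear triangle self-equivalence of $\sfT^{\sfc}$; put $W^{-1}:=F^{-1}(\one)$, which is compact. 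The point is that an $R$-linear triangle equivalence of a tensor triangulated category need \emph{not} be a tensor functor, so to get $W\otimes W^{-1}\cong\one$ I need an extra argument: I would invoke the fact (as in the discussion around \eqref{eq:tensor-identity}, and more to the point a standard fact about compactly generated tensor triangulated categories) that any coproduct-preserving $R$-linear exact endofunctor of $\sfT$ is naturally isomorphic to $(-)\otimes F(\one)$; applying this to $F$ and $F^{-1}$ (after noting $F$ extends to a coproduct-preserving endofunctor of $\sfT$ by Brown representability, being a composite of an equivalence on compacts with the inclusion) gives $F(X)\cong X\otimes W$ and $F^{-1}(X)\cong X\otimes W^{-1}$, whence $\one\cong F^{-1}F(\one)\cong W\otimes W^{-1}$. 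This is condition~(1). For condition~(2), apply $\gam_\fp$: using \eqref{eq:tensor-identity} and the natural isomorphism $\gam_\fp\comp F\cong \Si^{d(\fp)}\comp T_\fp$ evaluated at $\one$, together with $T_\fp(\one)\cong T_\fp(\one)\otimes\one$ and $\gam_\fp W\cong \gam_\fp(\one)\otimes W\cong\gam_\fp(W)$, one reads off $\gam_\fp W\cong \gam_\fp F(\one)\cong\Si^{d(\fp)}T_\fp(\one)$.

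For the converse, suppose $W$ is a compact object satisfying (1) and (2). Define $F:=(-)\otimes W\colon \sfT^{\sfc}\to\sfT^{\sfc}$; this lands in $\sfT^{\sfc}$ because $W$ is compact and compact objects are closed under $\otimes$ (the unit $\one$ is compact and $\sfT^{\sfc}$ is a tensor-closed thick subcategory), it is exact since $\otimes$ is a triangulated bifunctor, and it is $R$-linear because $R$ acts through $\End^*_\sfT(\one)$. It is an equivalence with quasi-inverse $(-)\otimes W^{-1}$, using condition~(1) and associativity/unitality of $\otimes$. It remains to check $\gam_\fp\comp F\cong \Si^{d(\fp)}\comp T_\fp$ as functors $\sfT^{\sfc}\to\sfT$: for $C\in\sfT^{\sfc}$, the tensor identities \eqref{eq:tensor-identity} give
\[
\gam_\fp(F C)=\gam_\fp(C\otimes W)\cong \gam_\fp(\one)\otimes C\otimes W\cong C\otimes \gam_\fp(W)\cong C\otimes \Si^{d(\fp)}T_\fp(\one)\cong \Si^{d(\fp)}\bigl(T_\fp(\one)\otimes C\bigr)\cong \Si^{d(\fp)}T_\fp(C),
\]
again invoking \eqref{eq:tensor-identity} for the last step; and this chain is natural in $C$. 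Hence $\sfT$ is Gorenstein with global Serre functor $F$.

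I expect the main obstacle to be the forward direction, specifically the step asserting that an arbitrary $R$-linear triangle self-equivalence $F$ of $\sfT^{\sfc}$ is, up to natural isomorphism, tensoring with $W=F(\one)$. One must first extend $F$ to a coproduct-preserving endofunctor $\widehat F$ of $\sfT$ (possible since $F$ sends $\sfT^{\sfc}$ to $\sfT^{\sfc}$ and $\sfT$ is compactly generated — so $\widehat F$ is determined by its restriction to compacts and commutes with homotopy colimits), and then show $\widehat F\cong(-)\otimes\widehat F(\one)$. The latter is the assertion that a coproduct-preserving exact $R$-linear functor out of a rigidly-compactly-generated (or at least suitably well-behaved) tensor triangulated category is determined by its value on the unit; this is where one uses that $\sfT^{\sfc}$ is generated by $\one$ under the tensor action, or alternatively one restricts attention to the case at hand ($\StMod G$, where $\one=k$ and compacts are rigid) so that the needed structural fact is available. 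If one prefers to avoid this subtlety entirely, an alternative is to only claim the equivalence in the form most useful downstream — namely to take the \emph{definition} of the Gorenstein property in the tensor-triangulated setting to be the existence of such $W$, and prove that this implies the general definition (the converse direction above), which is the only implication actually used in Section~\ref{sec:gorenstein}; but assuming the paper wants the full ``if and only if'', the tensor-functor rigidity argument above is the crux.
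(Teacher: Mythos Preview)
Your converse direction is correct and matches the paper's approach exactly: define $F:=W\otimes-$, check it is an $R$-linear triangle equivalence using~(1), and then verify $\gam_\fp\circ F\cong\Si^{d(\fp)}\circ T_\fp$ on compacts using the tensor identities~\eqref{eq:tensor-identity} and condition~(2). That is the entire content of the paper's argument in this direction.

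Where you diverge from the paper is the forward direction. The paper's proof is a single sentence: it simply asserts that ``the functor $F$ and the object $W$ determine each other: $W:=F(\one)$ and $F:=W\otimes-$''. It does \emph{not} justify that an arbitrary $R$-linear triangle self-equivalence of $\sfT^{\sfc}$ must be naturally isomorphic to tensoring with $F(\one)$; it takes this as understood in the tensor-triangulated setting. You are right to flag this as a genuine subtlety --- without some rigidity hypothesis on $\sfT^{\sfc}$ (e.g.\ that compacts are rigid and $\one$ generates), the implication does not follow formally from what is written. Your proposed fix (extend $F$ to a coproduct-preserving endofunctor of $\sfT$ and invoke a reconstruction principle) is reasonable in spirit, but the claim that any such endofunctor is $(-)\otimes F(\one)$ is itself nontrivial and needs those same rigidity assumptions, which the lemma does not state.

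In short: your argument is more careful than the paper's, and the obstacle you anticipate is real. The paper sidesteps it by treating the dictionary $F\leftrightarrow W$ as tautological in the tensor context; for the application to $\StMod G$ (where compacts are rigid and this is unproblematic) and for the only direction used downstream (the converse), nothing is lost. If you want a fully rigorous ``if and only if'' at the stated generality, you would need to add a rigidity hypothesis or, as you suggest, weaken the forward direction.
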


One may think of $W$ as a \emph{dualising} object in $\sfT$.

\begin{proof} Since $\gam_\fp$ and $T_\fp$ are tensor functors, by
\eqref{eq:tensor-identity}, the functor $F\colon\sfT^c\to\sfT^c$ and
the object $W$ determine each other: $W:= F(\one)$ and $F:=W\otimes
-$.
\end{proof}

The example below justifies the language of Gorenstein triangulated
categories.

\begin{example}
\label{ex:gor-ring} Let $A$ be a commutative noetherian ring and
$\sfD$ the derived category of $A$-modules. This is an $A$-linear
compactly generated tensor triangulated category, with compact objects
the perfect complexes of $A$-modules, that is to say, complexes
quasi-isomorphic to bounded complexes of finitely generated projective
$A$-modules.

Recall that the ring $A$ is \emph{Gorenstein} if for each $\fp\in\Spec
A$ the injective dimension of $A_{\fp}$, as a module over itself, is
finite; see \cite[3.1]{Bruns/Herzog:1998a}. By Grothendieck's local
duality theorem~\cite[\S3.5]{Bruns/Herzog:1998a}, this is equivalent
to an isomorphism of $A_{\fp}$-modules
\[ \gam_{\fp} A \cong \Si^{-\dim A_{\fp}} I(\fp) \,.
\] Thus $\sfD$ is Gorenstein with dualising object $A$ and
$d(\fp)=-\dim A_{\fp}$; see Lemma~\ref{le:gor-tt}. Conversely, it is
not difficult to check that $\sfD$ is Gorenstein only if $A$ is
Gorenstein.
\end{example}

There are other examples (notably, differential graded algebras) that
fit into this context but we defer discussing these to another
occasion.  For a finite group scheme $G$ over a field $k$, the
Gorenstein property for $\StMod G$ is basically a reformulation of
Theorem~\ref{thm:gorenstein}.

\begin{corollary}
\label{cor:stmod-serre} As an $H^{*}(G,k)$-linear triangulated
category, $\StMod G$ is Gorenstein, with $F=\delta_G\otimes_k -$ the
Nakayama functor and $d(\fp)=\dim H^*(G,k)/\fp$ for each $\fp$ in
$\Proj H^*(G,k)$.\qed
\end{corollary}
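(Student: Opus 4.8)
The plan is to deduce Corollary~\ref{cor:stmod-serre} directly from Theorem~\ref{thm:gorenstein} by unwinding the definition of a Gorenstein tensor triangulated category given in Lemma~\ref{le:gor-tt}. First I would observe that $\StMod G$ is an $H^*(G,k)$-linear compactly generated tensor triangulated category with unit $\one = k$, so the criterion of Lemma~\ref{le:gor-tt} applies. The candidate dualising object is $W := \delta_G$, and I must check the two conditions (1) and (2) of that lemma.

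For condition (1), I would recall that $\delta_G = \nu(k)$ is a one-dimensional $G$-module (the modular character), so it is invertible under $\otimes_k$ with inverse $W^{-1} = \delta_G^\vee = \Hom_k(\delta_G,k)$; indeed $\delta_G\otimes_k\delta_G^\vee\cong k$. Both objects are finite dimensional, hence compact in $\StMod G$. For condition (2), I would invoke the support computation: by Lemma~\ref{le:not-m} the support of $\StMod G$ as an $H^*(G,k)$-linear category is contained in $\Proj H^*(G,k)$ (the functor $\gam_{\fm}$ vanishes, and more generally only primes in $\Proj H^*(G,k)$ contribute), so it suffices to produce, for each $\fp$ in $\Proj H^*(G,k)$, an integer $d(\fp)$ and an isomorphism $\gam_\fp(\delta_G)\cong \Sigma^{d(\fp)}T_\fp(k)$. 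But this is precisely the content of Theorem~\ref{thm:gorenstein}, which gives $\gam_\fp(\delta_G)\cong \Omega^{-d}T_\fp(k)$ with $d = \dim H^*(G,k)/\fp$; since the suspension in $\StMod G$ is $\Omega^{-1}$, we have $\Omega^{-d} = \Sigma^{d}$, so $d(\fp) = \dim H^*(G,k)/\fp$ works. Finally, the global Serre functor is then $F = W\otimes_k - = \delta_G\otimes_k -$, which is exactly the Nakayama functor $\nu$ recalled earlier; it is an $R$-linear triangle equivalence of $\stmod G = (\StMod G)^c$ with quasi-inverse $\delta_G^\vee\otimes_k-$, as $\nu^d = \id$ for suitable $d$ by \eqref{eq:nu-periodic}.

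The proof is essentially a bookkeeping exercise, so there is no serious obstacle; the only point requiring a moment's care is matching the sign conventions, namely that $\StMod G$ has suspension $\Omega^{-1}$ and therefore $\Sigma^{d(\fp)}$ in the abstract definition corresponds to $\Omega^{-d(\fp)}$ in the concrete statement of Theorem~\ref{thm:gorenstein} — which is why the shift appears with a sign change — together with checking that the support really is confined to $\Proj H^*(G,k)$ so that one need not say anything about the maximal ideal $\fm$. Given all of this, the corollary follows immediately, which is why it is stated with a \texttt{\textbackslash qed} and no separate proof.
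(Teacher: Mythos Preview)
Your proposal is correct and matches the paper's approach: the corollary is stated with a \qed and the surrounding text says the Gorenstein property for $\StMod G$ ``is basically a reformulation of Theorem~\ref{thm:gorenstein}'', which is exactly what you unwind via Lemma~\ref{le:gor-tt}. Your care with the suspension convention ($\Sigma=\Omega^{-1}$) and the observation via Lemma~\ref{le:not-m} that the support lies in $\Proj H^*(G,k)$ are precisely the small points one must check.
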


Next we discuss the Gorenstein property for $\KInj G$ for a finite
group scheme $G$.  To this end observe that the assignment $X\mapsto
\nu X$ induces triangle equivalences
\[ \KInj G\xra{ \sim }\KInj G\qquad\text{and}\qquad \sfD^b(\mod
G)\xra{ \sim }\sfD^b(\mod G).
\] We are ready to establish the Gorenstein property for $\KInj G$.

\begin{theorem}
\label{th:KInj} Let $G$ be a finite group scheme over a field
$k$. Then $\KInj G$ is Gorenstein with $F$ induced by the Nakayama
functor and $d(\fp)=\dim H^*(G,k)/\fp$ for each $\fp\in\Spec
H^*(G,k)$.
\end{theorem}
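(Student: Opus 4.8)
The plan is to deduce Theorem~\ref{th:KInj} from Theorem~\ref{thm:gorenstein} (equivalently from Corollary~\ref{cor:stmod-serre}), by transporting the Gorenstein property from $\StMod G$ to $\KInj G$ along the functor $\pi$ and its relationship with the quotient $\KInj G\to \sfD^b(\mod G)$ on compacts. The key structural input is the recollement-type picture from Section~\ref{sec:cohomology-and-localisation}: the acyclic subcategory $\KacInj G$ is equivalent to $\StMod G$, the quotient $\KInj G\to \sfD(\Mod G)$ restricts to an equivalence $\KInjc G\xra{\sim}\sfD^b(\mod G)$ on compacts, and by Lemma~\ref{le:KInj-p-local} the functor $\pi$ induces equivalences ${\KInj G}_\fp\xra{\sim}(\StMod G)_\fp$ and $\gam_\fp(\KInj G)\xra{\sim}\gam_\fp(\StMod G)$ for every $\fp\in\Proj H^*(G,k)$. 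So for primes in $\Proj H^*(G,k)$ the needed isomorphism $\gam_\fp\comp F\cong \Si^{d(\fp)}\comp T_\fp$ is essentially inherited from $\StMod G$ once one checks that $\pi$ intertwines the relevant $F$'s and $T_\fp$'s; this compatibility is the content of $\pi$ being $H^*(G,k)$-linear and a tensor functor, together with the fact that the Nakayama functor $\nu$ is compatible with $\pi$ (both are given by tensoring, over $k$, essentially by $\delta_G$).

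First I would fix the candidate global Serre functor: $F\colon \KInjc G\xra{\sim}\KInjc G$ induced by the Nakayama equivalence $\nu\colon \KInj G\xra{\sim}\KInj G$ (equivalently, $F=\delta_G\otimes_k-$), and set $d(\fp)=\dim H^*(G,k)/\fp$. By Lemma~\ref{le:gor-tt} it suffices, since $\KInj G$ is tensor triangulated with unit $\sfi k$ and $R$ acts through $\End^*(\sfi k)=H^*(G,k)$, to produce a compact $\otimes$-invertible object $W$ (here $W=\delta_G\otimes_k \sfi k$, with inverse $\delta_G^{\otimes(d-1)}\otimes_k\sfi k$ using $\nu^d=\id$, see \eqref{eq:nu-periodic}) and to verify $\gam_\fp W\cong \Si^{d(\fp)}T_\fp(\sfi k)$ for each $\fp\in\supp_R(\KInj G)$. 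Next I would pin down $\supp_R(\KInj G)$: since $\gam_\fm(\KInj G)=\{0\}$ is false --- rather, by Lemma~\ref{le:KInj-p-local}, $\gam_\fm X\cong X\otimes_k\sfp k$, which is generally nonzero --- the support includes $\fm=H^{\ges 1}(G,k)$ in addition to all of $\Proj H^*(G,k)$. So there are two cases: $\fp\in\Proj H^*(G,k)$, and $\fp=\fm$.

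For $\fp\in\Proj H^*(G,k)$: apply $\pi$, which is $H^*(G,k)$-linear, preserves coproducts, preserves compactness (it is a left adjoint of the inclusion $\KacInj G\subseteq\KInj G$), and is a tensor functor. By Lemma~\ref{le:gamma-commute} (with $S=H^*(G,k)$, $\fa=\fp$), $\pi\comp\gam_\fp\cong\gam_\fp\comp\pi$; and by the defining property \eqref{eq:inj-coh} together with the fact that $\pi$ identifies $\Hom^*$-groups in positive degrees (Remark~\ref{re:Tate-coh}), one gets $\pi\comp T_\fp\cong T_\fp\comp\pi$ for $\fp\in\Proj H^*(G,k)$ --- here one must be slightly careful, because $T_\fp$ on $\KInj G$ is defined via $\Hom^*_{\KInj G}$ while on $\StMod G$ via $\sHom^*_G$, and these differ only by $\fm$-torsion, hence agree after $\fp$-localisation. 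Likewise $\pi\comp F\cong F\comp\pi$ since the Nakayama functor commutes with $\pi$ up to natural isomorphism ($\pi(\delta_G\otimes_k X)=\delta_G\otimes_k\pi(X)$, as $\pi$ is $\otimes_k$-linear). Applying these to $\sfi k$ and using that $\pi(\sfi k)=k$ in $\StMod G$, Corollary~\ref{cor:stmod-serre} gives $\gam_\fp\pi F(\sfi k)\cong\Si^{d(\fp)}T_\fp\pi(\sfi k)$ in $\StMod G$, i.e.\ $\pi(\gam_\fp F(\sfi k))\cong\pi(\Si^{d(\fp)}T_\fp(\sfi k))$; since $\pi$ induces an equivalence $\gam_\fp(\KInj G)\xra{\sim}\gam_\fp(\StMod G)$ by Lemma~\ref{le:KInj-p-local} and both sides lie in $\gam_\fp(\KInj G)$, the isomorphism lifts. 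This settles all $\fp\in\Proj H^*(G,k)$.

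The main obstacle is the closed point $\fp=\fm$. Here $\StMod G$ carries no information --- $\gam_\fm(\StMod G)=\{0\}$ by Lemma~\ref{le:not-m} --- so the argument must be run on $\KInj G$ directly, using instead the equivalence $\KInjc G\xra{\sim}\sfD^b(\mod G)$ and the identification $\gam_\fm X\cong X\otimes_k\sfp k$ from Lemma~\ref{le:KInj-p-local}. Concretely, $\gam_\fm(\sfi k)\cong \sfp k$ (this is shown inside the proof of Lemma~\ref{le:KInj-p-local}), so $\gam_\fm(F(\sfi k))\cong\gam_\fm(\delta_G\otimes_k\sfi k)\cong\delta_G\otimes_k\sfp k$, a projective (= injective) resolution of $\delta_G=\nu k$. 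On the other hand $d(\fm)=\dim H^*(G,k)/\fm=0$, so one must show $T_\fm(\sfi k)\cong \delta_G\otimes_k\sfp k$ in $\KInj G$. Unwinding \eqref{eq:inj-coh}, $T_\fm(\sfi k)$ represents $\Hom_{H^*(G,k)}(\Ext^*_G(k,-),I(\fm))$ on $\KInjc G\simeq\sfD^b(\mod G)$; this is precisely the statement that, restricted to the bounded derived category of finite-dimensional modules, $\delta_G\otimes_k-$ (shifted appropriately) is a Serre functor --- which is classical Serre duality for $\sfD^b(\mod G)$ over the self-injective, not-necessarily-symmetric algebra $kG$, equivalently Auslander--Reiten/Nakayama duality $\Hom_G(C,D)^\vee\cong\Hom_G(D,\nu C)$ packaged with the local duality isomorphism $\Hom_k(-,k)\cong\Hom_{H^*(G,k)}(-,I(\fm))$ on finite-length $H^*(G,k)$-modules (Lemma~\ref{lem:hull}). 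I would therefore prove the $\fm$-case by the same five-step computation as in the ``closed point'' claim inside the proof of Theorem~\ref{thm:gorenstein}, but carried out in $\KInj G$ with $\sHom_G$ replaced by $\Hom_{\KInj G}$ and $\StMod G$ replaced by $\KInj G$, noting that for $C$ compact the module $\Hom^*_{\KInj G}(C,-)$ is finitely generated over $H^*(G,k)$ (Friedlander--Suslin) and the relevant Homs are $\fm$-complete/$\fm$-torsion after applying $\gam_\fm$. Assembling the two cases and invoking Lemma~\ref{le:gor-tt} completes the proof, with the transcription of the Tate-duality computation to $\KInj G$ being the only point requiring genuine (though routine) care.
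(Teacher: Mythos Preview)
Your approach is essentially the paper's: split into $\fp\ne\fm$ and $\fp=\fm$, handle the former by transporting along the equivalence $\gam_\fp(\KInj G)\xra{\sim}\gam_\fp(\StMod G)$ of Lemma~\ref{le:KInj-p-local} and invoking Theorem~\ref{thm:gorenstein}, and treat $\fm$ separately using $\gam_\fm X\cong X\otimes_k\sfp k$. The paper's write-up is much terser, but the structure is identical.

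The one substantive difference is how the $\fm$ case is handled. The paper does not redo the computation; it simply cites \cite[Theorem~3.4]{Krause/Le:2006a} for the isomorphism $\gam_\fm\comp F\cong T_\fm$ in $\KInj G$. Your sketch proposes to run the five-step closed-point computation from the proof of Theorem~\ref{thm:gorenstein} in $\KInj G$, but the pivotal step there is the duality $\Hom_{\KInj G}(X,\nu(\sfp k))\cong \Hom_{\KInj G}(\sfi k,X)^\vee$. The formula $\Hom_G(C,D)^\vee\cong\Hom_G(D,\nu C)$ you invoke holds for \emph{projective} $C$ (that is how the paper states it, under ``Nakayama functor''); promoting it to arbitrary objects of $\KInj G$ is exactly the content of the Krause--Le Auslander--Reiten formula for complexes. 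So your sketch is on the right track but effectively reproves the cited result, and that step deserves more than the phrase ``routine care''. A minor point: Lemma~\ref{lem:hull} is stated for closed points of $\Proj A$, not for the irrelevant ideal $\fm=H^{\ges 1}(G,k)$; the Matlis-duality fact you need at $\fm$ (namely $\Hom_A(N,I(\fm))\cong N^\vee$ for $\fm$-torsion $N$) is true and even simpler, but it is not literally Lemma~\ref{lem:hull}.
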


\begin{proof} 
  Set $\fm:=H^{\ges 1}(G,k)$. By Lemma~\ref{le:KInj-p-local}, one has
  $\gam_\fm= \sfp k\otimes_k-$, where $\sfp k$ denotes a projective
  resolution of the trivial representation. The isomorphism
  $\gam_\fm\comp F\cong T_\fm$ follows
  from~\cite[Theorem~3.4]{Krause/Le:2006a}.

For a prime ideal $\fp\neq\fm$, the assertion follows from
Theorem~\ref{thm:gorenstein}, for localisation at $\fp$ yields a
triangle equivalence $\gam_\fp(\KInj G)\xra{\sim}\gam_\fp(\StMod G)$
by Lemma~\ref{le:KInj-p-local}.
\end{proof}

\section{Local Serre duality}
\label{sec:serre} 

In this section we introduce a notion of local Serre duality for an
essentially small $R$-linear triangulated category and link it to the
Gorenstein property from Section~\ref{sec:gorenstein}.  We use the
concept of a Serre functor for a triangulated category which is due to
Bondal and Kapranov \cite{Bondal/Kapranov:1990a}; this provides a
conceptual way to formulate classical Serre duality and Grothendieck's
local duality in a triangulated setting.

In the second part of this section we discuss the existence of
Auslander-Reiten triangles. These were introduced by Happel for
derived categories of finite dimensional algebras \cite{Happel:1988a},
and in \cite{Happel:1991a} he established the connection with the
Gorenstein property, while Reiten and Van den Bergh
\cite{Reiten/VandenBergh:2002a} discovered the connection between
Auslander-Reiten triangles and the existence of a Serre functor.

\subsection*{Small triangulated categories with central action} 

Let $\sfC$ be an essentially small $R$-linear triangulated
category. Fix $\fp\in \Spec R$ and let $\sfC_\fp$ denote the
triangulated category that is obtained from $\sfC$ by keeping the
objects of $\sfC$ and setting
\[ \Hom_{\sfC_\fp}^*(X,Y):=\Hom^*_{C}(X,Y)_\fp\,.
\] Then $\sfC_{\fp}$ is an $R_{\fp}$-linear triangulated category and
localising the morphisms induces an exact functor $\sfC\to\sfC_\fp$.

Let $\tors_{\fp}\sfC$ be the full subcategory of $\fp$-torsion objects
in $\sfC_\fp$, namely
\[ \tors_{\fp}\sfC:=\{X\in \sfC_\fp\mid \End^{*}_{\sfC_{\fp}}(X)
\text{ is $\fp$-torsion} \}.
\] This is a thick subcategory of $\sfC_\fp$. In
\cite{Benson/Iyengar/Krause:2015a} this category is denoted
$\gam_{\fp}\sfC$. The notation has been changed to avoid confusion.

\begin{remark} 
  Let $F\colon \sfC\to\sfC$ be an $R$-linear equivalence. It is
  straightforward to check that this induces triangle equivalences
  $F_\fp\colon\sfC_\fp\xra{\sim}\sfC_\fp$ and
  $\tors_{\fp}\sfC\xra{\sim}\tors_{\fp}\sfC$ making the following
  diagram commutative.
\[
\begin{tikzcd} 
\sfC\arrow[r,two heads]\arrow[d,"F"]&\sfC_\fp
\arrow[d,"F_\fp"]&\arrow[l, tail]\tors_{\fp}\sfC \arrow[d,"F_\fp"]\\
\sfC\arrow[r,two heads]&\sfC_\fp&\arrow[l,tail]\tors_{\fp}\sfC
\end{tikzcd}
\]
\end{remark}

\begin{remark}
 \label{rem:compacts-localisation} Let $\sfT$ be a compactly generated
$R$-linear triangulated category. Set $\sfC:=\sfT^c$ and fix
$\fp\in\Spec R$. The triangulated categories $\sfT_\fp$ and
$\gam_\fp\sfT$ are compactly generated. The left adjoint of the
inclusion $\sfT_\fp\to\sfT$ induces (up to direct summands) a triangle
equivalence $\sfC_\fp\xra{\sim}(\sfT_\fp)^c$ and restricts to a
triangle equivalence
\[ \tors_{\fp}\sfC\xra{\sim}(\gam_\fp\sfT)^c\,.
\] This follows from the fact that the localisation functor
$\sfT\to\sfT_\fp$ taking $X$ to $X_\fp$ preserves compactness and that
for compact objects $X,Y$ in $\sfT$
\[ \Hom_\sfT^*(X,Y)_\fp\xra{\sim}\Hom_{\sfT_\fp}^*(X_\fp,Y_\fp)\,.
\] For details we refer to \cite{Benson/Iyengar/Krause:2015a}.
\end{remark}

\subsection*{Local Serre duality}

Let $R$ be a graded commutative ring that is \emph{local}; thus there
is a unique homogeneous maximal ideal, say $\fm$.  Extrapolating from
Bondal and Kapranov \cite[\S3]{Bondal/Kapranov:1990a}, we call an
$R$-linear triangle equivalence $F\colon\sfC\xra{\sim}\sfC$ a
\emph{Serre functor} if for all objects $X,Y$ in $\sfC$ there is a
natural isomorphism
\begin{equation}
\label{eq:serre}
\Hom_{R}(\Hom^*_{\sfC}(X,Y),I(\fm))\xra{\sim}\Hom_{\sfC}(Y,F X)\,.
\end{equation} 
The situation when $R$ is a field was the one considered in
\cite{Bondal/Kapranov:1990a}. For a general ring $R$, the appearance
of $\Hom^{*}_{R}(-,I(\fm))$, which is the Matlis duality functor, is
natural for it is an extension of vector-space duality; see also
Lemma~\ref{lem:hull}.

For an arbitrary graded commutative ring $R$, we say that an
$R$-linear triangulated category $\sfC$ satisfies \emph{local Serre
duality} if there exists an $R$-linear triangle equivalence
$F\col\sfC\xra{\sim}\sfC$ such that for every $\fp\in\Spec R$ and some
integer $d(\fp)$ the induced functor $\Si^{-d(\fp)}
F_\fp\colon\tors_{\fp}\sfC\xra{\sim}\tors_{\fp}\sfC$ is a Serre
functor for the $R_\fp$-linear category $\tors_{\fp}\sfC$. Thus for
all objects $X,Y$ in $\tors_{\fp}\sfC$ there is a natural isomorphism
\[
\Hom_{R}(\Hom^*_{\sfC_{\fp}}(X,Y),I(\fp))\xra{\sim}\Hom_{\sfC_{\fp}}(Y,\Si^{-d(\fp)}
F_\fp X)\,.
\]

For a compactly generated triangulated category, the Gorenstein
condition from Section~\ref{sec:gorenstein} is linked to local Serre
duality for the subcategory of compact objects.

\begin{proposition}
  \label{pr:gorenstein-serre} Let $R$ be a graded commutative
  noetherian ring and $\sfT$ a compactly generated $R$-linear
  triangulated category. Suppose that $\sfT$ is Gorenstein, with
  global Serre functor $F$ and shifts $\{d(\fp)\}$. Then for each
  $\fp\in\supp_R(\sfT)$, object $X\in (\gam_{\fp}\sfT)^c$ and
  $Y\in \sfT_{\fp}$ there is a natural isomorphism
\[ \Hom_R(\Hom^*_{\sfT}(X,Y),I(\fp)) \cong
\Hom_{\sfT}(Y,\Si^{-d(\fp)}F_{\fp}(X))\,.
\]
\end{proposition}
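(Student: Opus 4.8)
The plan is to reduce the assertion to the defining isomorphism \eqref{eq:inj-coh} of the object $T_\fp(C)$ for compact $C$, and then to feed in the Gorenstein hypothesis $\gam_\fp\comp F\cong\Si^{d(\fp)}\comp T_\fp$.

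First I would set up the reduction to compact objects of $\sfT$. By Remark~\ref{rem:compacts-localisation} the localisation $\sfT\to\sfT_\fp$, $Z\mapsto Z_\fp$, induces an equivalence $\tors_\fp(\sfT^{\sfc})\xra{\sim}(\gam_\fp\sfT)^{\sfc}$, $C\mapsto C_\fp$, up to direct summands, and the functor $F_\fp$ on $(\gam_\fp\sfT)^{\sfc}$ is the transport of the induced functor $F_\fp$ on $\tors_\fp(\sfT^{\sfc})$ along it, so that $F_\fp(C_\fp)\cong(FC)_\fp$. Since for $C$ in $\tors_\fp(\sfT^{\sfc})$ the object $C_\fp$ already lies in $\gam_\fp\sfT$, one has $\gam_\fp C\cong C_\fp$; and because $F$ restricts to an equivalence of $\tors_\fp(\sfT^{\sfc})$ the same applies to $FC$, giving $\gam_\fp(FC)\cong(FC)_\fp\cong F_\fp(C_\fp)$. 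As both sides of the claimed formula are additive in $X$, it is enough to construct a natural isomorphism for $X=C_\fp$ with $C$ in $\tors_\fp(\sfT^{\sfc})$, and then to carry it to direct summands by functoriality.

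For such an $X$, and for $Y$ in $\sfT_\fp$, the adjunction between $(-)_\fp$ and the inclusion $\sfT_\fp\hookrightarrow\sfT$ gives a natural isomorphism $\Hom^{*}_\sfT(X,Y)=\Hom^{*}_\sfT(C_\fp,Y)\cong\Hom^{*}_\sfT(C,Y)$ of graded $R$-modules, which is $\fp$-local because $Y$ is $\fp$-local and $C$ is compact. Applying $\Hom_R(-,I(\fp))$ and then \eqref{eq:inj-coh} for $T_\fp(C)=T(C,I(\fp))$ yields natural isomorphisms
\[
\Hom_R(\Hom^{*}_\sfT(X,Y),I(\fp))\;\cong\;\Hom_R(\Hom^{*}_\sfT(C,Y),I(\fp))\;\cong\;\Hom_\sfT(Y,T_\fp(C))\,.
\]
It remains to identify $T_\fp(C)$: the Gorenstein hypothesis gives $\gam_\fp(FC)\cong\Si^{d(\fp)}T_\fp(C)$, and combined with $\gam_\fp(FC)\cong F_\fp(X)$ from the reduction this yields $T_\fp(C)\cong\Si^{-d(\fp)}F_\fp(X)$. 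Substituting into the display finishes the argument; naturality in $Y$ is visible from the construction, and naturality in $X$ follows from that of the isomorphism as $C$ ranges over $\tors_\fp(\sfT^{\sfc})$, after the passage to summands.

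The step I expect to require the most care is the bookkeeping in the second paragraph: checking that the equivalence of Remark~\ref{rem:compacts-localisation} is compatible with both $F_\fp$ and $\gam_\fp$ in the way asserted, and that the isomorphism obtained for $X=C_\fp$ genuinely is natural once one passes to direct summands. Everything downstream of that is a formal manipulation of adjunctions together with the Brown representability that defines $T_\fp$.
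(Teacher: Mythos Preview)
Your proposal is correct and follows essentially the same route as the paper: reduce via Remark~\ref{rem:compacts-localisation} to $X=C_\fp$ for a $\fp$-torsion compact $C$, use the adjunction $\Hom^*_\sfT(C_\fp,Y)\cong\Hom^*_\sfT(C,Y)$ to invoke the defining property of $T_\fp(C)$, and then replace $T_\fp(C)$ by $\Si^{-d(\fp)}\gam_\fp(FC)\cong\Si^{-d(\fp)}F_\fp(X)$ using the Gorenstein hypothesis. The paper unpacks the identification $\gam_\fp(FC)\cong F_\fp(C_\fp)$ as $\gam_\fp(FC)=\gam_{\mcV(\fp)}(FC)_\fp=\gam_{\mcV(\fp)}F_\fp(C_\fp)=F_\fp(C_\fp)$, whereas you compress this into the observation that $FC\in\tors_\fp(\sfT^{\sfc})$ forces $(FC)_\fp\in\gam_\fp\sfT$; these are the same argument.
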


\begin{proof} 
  Given Remark~\ref{rem:compacts-localisation} we can assume
  $X=C_{\fp}$ for a $\fp$-torsion compact object $C$ in $\sfT$.  The
  desired isomorphism is a concatenation of the following natural
  ones:
\begin{align*} 
\Hom_R(\Hom^*_{\sfT}(C_{\fp},Y),I(\fp)) & \cong
\Hom_R(\Hom^*_{\sfT}(C,Y),I(\fp)) \\ {}&\cong\Hom_{\sfT}(Y,T_\fp(C))\\
{}&\cong\Hom_{\sfT}(Y,\Si^{-d(\fp)}\gam_\fp F(C))\\
{}&\cong\Hom_{\sfT}(Y,\Si^{-d(\fp)}\gam_{\mcV(\fp)} F(C)_\fp)\\
{}&\cong\Hom_{\sfT}(Y,\Si^{-d(\fp)} \gam_{\mcV(\fp)} F_\fp(C_\fp))\\
{}&\cong\Hom_{\sfT}(Y,\Si^{-d(\fp)} F_\fp(C_{\fp}))
\end{align*} 
In this chain, the first map is induced by the localisation
$C\mapsto C_{\fp}$ and is an isomorphism because $Y$ is $\fp$-local.
The second one is by the definition of $T_{\fp}(C)$; the third is by
the Gorenstein property of $\sfT$; the fourth is by the definition of
$\gam_{\fp}$; the last two are explained by
Remark~\ref{rem:compacts-localisation}, where for the last one uses
also the fact that $C_{\fp}$, and hence also $F_{\fp}(C_{\fp})$, is
$\fp$-torsion.
\end{proof}
 
\begin{corollary}
\label{co:gorenstein-serre} Let $R$ be a graded commutative noetherian
ring and $\sfT$ a compactly generated $R$-linear triangulated
category. If $\sfT$ is Gorenstein, then $\sfT^{\sfc}$ satisfies local
Serre duality.
\end{corollary}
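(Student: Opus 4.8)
The plan is to deduce the corollary formally from Proposition~\ref{pr:gorenstein-serre}, using Remark~\ref{rem:compacts-localisation} and the commutative diagram in the remark preceding it for the bookkeeping. Write $\sfC:=\sfT^{\sfc}$; since $\sfT$ is compactly generated, $\sfC$ is essentially small. Let $F\colon\sfT^{\sfc}\xra{\sim}\sfT^{\sfc}$ be the global Serre functor supplied by the Gorenstein hypothesis and $\{d(\fp)\}$ the associated shifts. This $F$ will serve as the $R$-linear triangle equivalence $\sfC\xra{\sim}\sfC$ demanded in the definition of local Serre duality, and what remains is to verify, for every $\fp\in\Spec R$, that $\Si^{-d(\fp)}F_{\fp}\colon\tors_{\fp}\sfC\xra{\sim}\tors_{\fp}\sfC$ is a Serre functor for the $R_{\fp}$-linear category $\tors_{\fp}\sfC$.

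For $\fp\notin\supp_{R}(\sfT)$ the assertion is vacuous: then $\gam_{\fp}\sfT=0$ by definition of the support, so $(\gam_{\fp}\sfT)^{\sfc}=0$ and hence $\tors_{\fp}\sfC=0$ by Remark~\ref{rem:compacts-localisation}; one takes $d(\fp):=0$. So fix $\fp\in\supp_{R}(\sfT)$ and take $d(\fp)$ as in the Gorenstein structure. Because $F$ is $R$-linear, the remark preceding Remark~\ref{rem:compacts-localisation} yields an induced triangle equivalence $F_{\fp}\colon\tors_{\fp}\sfC\xra{\sim}\tors_{\fp}\sfC$. Given objects $X,Y$ of $\tors_{\fp}\sfC$, transport them along the equivalence $\tors_{\fp}\sfC\xra{\sim}(\gam_{\fp}\sfT)^{\sfc}$ of Remark~\ref{rem:compacts-localisation} to objects of $(\gam_{\fp}\sfT)^{\sfc}$, which in particular lie in $\sfT_{\fp}$; under this transport $\Hom^{*}_{\sfC_{\fp}}(X,Y)$ becomes $\Hom^{*}_{\sfT}(X,Y)$ and $\Hom_{\sfC_{\fp}}(Y,-)$ becomes $\Hom_{\sfT}(Y,-)$, while $\Si^{-d(\fp)}F_{\fp}$ on $\tors_{\fp}\sfC$ is carried to the corresponding functor on $(\gam_{\fp}\sfT)^{\sfc}$ appearing in Proposition~\ref{pr:gorenstein-serre}. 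That proposition, applied to $X\in(\gam_{\fp}\sfT)^{\sfc}$ and $Y\in\sfT_{\fp}$, then provides a natural isomorphism
\[
\Hom_{R}(\Hom^{*}_{\sfC_{\fp}}(X,Y),I(\fp))\xra{\ \cong\ }\Hom_{\sfC_{\fp}}(Y,\Si^{-d(\fp)}F_{\fp}X)\,,
\]
natural in $X$ and $Y$; this is exactly the isomorphism \eqref{eq:serre} making $\Si^{-d(\fp)}F_{\fp}$ a Serre functor for $\tors_{\fp}\sfC$. As $\fp$ was arbitrary, it follows that $\sfT^{\sfc}$ satisfies local Serre duality with global Serre functor $F$.

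The step requiring real care — and the one I expect to be the main obstacle, albeit a purely bookkeeping one — is the claim that the functor $\Si^{-d(\fp)}F_{\fp}$ occurring in Proposition~\ref{pr:gorenstein-serre} (described there, on the compacts of $\sfT_{\fp}$, via $C_{\fp}\mapsto\gam_{\mcV(\fp)}(F(C)_{\fp})$) coincides, under the equivalences of Remark~\ref{rem:compacts-localisation}, with the equivalence of $\tors_{\fp}\sfC$ induced by the global Serre functor through the commutative-diagram remark. Unwinding this amounts to checking that the localisation of $F$ on $(\sfT^{\sfc})_{\fp}$, its transport along $(\sfT^{\sfc})_{\fp}\xra{\sim}(\sfT_{\fp})^{\sfc}$, its restriction to $\tors_{\fp}\sfC$, and the functor $C_{\fp}\mapsto\gam_{\mcV(\fp)}(F(C)_{\fp})$ are all canonically identified — a routine diagram chase with the adjunctions of Remark~\ref{rem:compacts-localisation}, using only that $R$-linearity of $F$ makes localisation commute with it. No new mathematical input is needed; once the compact objects of $\gam_{\fp}\sfT$ are understood as in Remark~\ref{rem:compacts-localisation}, the corollary follows from Proposition~\ref{pr:gorenstein-serre}.
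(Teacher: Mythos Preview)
Your proof is correct and follows exactly the approach of the paper, which simply says to combine Proposition~\ref{pr:gorenstein-serre} with Remark~\ref{rem:compacts-localisation}. You have spelled out the bookkeeping (including the vacuous case $\fp\notin\supp_{R}(\sfT)$ and the identification of the two incarnations of $F_{\fp}$) that the paper leaves implicit, but the substance is identical.
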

\begin{proof} Combine Proposition~\ref{pr:gorenstein-serre} with
Remark~\ref{rem:compacts-localisation}.
\end{proof}

\begin{example}
\label{ex:gor-ring2} In the notation of Example~\ref{ex:gor-ring},
when $A$ is a (commutative noetherian) Gorenstein ring, local Serre
duality reads: For each $\fp\in \Spec A$ and integer $n$ there are
natural isomorphisms
\[ \Hom_{A_{\fp}}(\Ext^{n}_{A_{\fp}}(X,Y),I(\fp)) \cong \Ext^{n+\dim
A_{\fp}}_{A_{\fp}}(Y, X)
\] where $X$ is a perfect complexes of $A_{\fp}$-modules with finite
length cohomology, and $Y$ is a complex of $A_{\fp}$-modules.
\end{example}

\subsection*{Auslander-Reiten triangles} 

Let $\sfC$ be an essentially small triangulated category. Following
Happel \cite{Happel:1988a}, an exact triangle
$X\xra{\alpha} Y\xra{\beta} Z\xra{\gamma}\Si X$ in $\sfC$ is an
\emph{Auslander-Reiten triangle} if
\begin{enumerate}[\quad\rm(1)]
\item any morphism $X\to X'$ that is not a split monomorphism factors
through $\alpha$;
\item any morphism $Z'\to Z$ that is not a split epimorphism factors
through $\beta$;
\item $\gamma\neq 0$.
\end{enumerate} 
In this case, the endomorphism rings of $X$ and $Z$ are local, and in
particular the objects are indecomposable. Moreover, each of $X$ and
$Z$ determines the AR-triangle up to isomorphism. Assuming conditions
(2) and (3), the condition (1) is equivalent to the following:
\begin{enumerate}[\quad\rm(1)]
\item[\rm(1$'$)] The endomorphism ring of $X$ is local.
\end{enumerate} 
See \cite[\S2]{Krause:2000a} for details.

Let $\sfC$ be a \emph{Krull-Schmidt category}, that is, each object
decomposes into a finite direct sum of objects with local endomorphism
rings. We say that $\sfC$ \emph{has AR-triangles} if for every
indecomposable object $X$ there are AR-triangles
\[ V\to W\to X\to\Si V \quad\text{and}\quad X\to Y\to Z\to\Si X\,.
\]

The next proposition establishes the existence of AR-triangles; it is
the analogue of a result of Reiten and Van den Bergh
\cite[I.2]{Reiten/VandenBergh:2002a} for triangulated categories that
are Hom-finite over a field.

\begin{proposition}
  \label{pro:AR-existence} Let $R$ be a graded commutative ring that
is local, and let $\sfC$ be an essentially small $R$-linear
triangulated category that is Krull-Schmidt. If $\sfC$ has a Serre
functor, then it has AR-triangles.
\end{proposition}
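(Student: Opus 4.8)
The plan is to construct the AR-triangle ending at a given indecomposable object $X$ directly from the defining isomorphism of the Serre functor $F$, mimicking the classical argument of Reiten--Van den Bergh but replacing vector-space duality by Matlis duality $\Hom_R(-,I(\fm))$. First I would note that since $\sfC$ is Krull-Schmidt, the endomorphism ring $\End_{\sfC}(X)$ is local; write $\fr$ for its Jacobson radical and $E:=\End_{\sfC}(X)/\fr$, a division ring. The module $\Hom^*_{\sfC}(X,X)$ is a finitely generated (indeed, I only need: the relevant graded pieces are finitely generated, or at least that the socle is nonzero) $R$-module on which $\fm$ acts, and the quotient map $\End_{\sfC}(X)\to E$ is a nonzero element of $\Hom_{\End_{\sfC}(X)}(\End_{\sfC}(X), E)$. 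The key point is that $E$, being annihilated by $\fr$ and hence by $\fm^{N}$ for some $N$ (as $\fr\supseteq \fm\End_{\sfC}(X)$ up to radicals — here one uses that $R\to\End_{\sfC}(X)$ lands in the center and $E$ is a division ring, so $\fm$ maps into $\fr$), embeds into $I(\fm)$ as an $R$-module: any nonzero map from the simple-ish module $E/(\text{stuff})$ to the injective hull of $R/\fm$ exists because $I(\fm)$ contains a copy of every module with support at $\fm$ and finite length over $R/\fm$ in its socle; more carefully, pick any nonzero $R$-linear map $\psi\colon E\to I(\fm)$, which exists since $\Hom_R(E, I(\fm))$ is the Matlis dual of the nonzero module $E$.

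Next I would feed the composite $\End_{\sfC}(X)\twoheadrightarrow E\xra{\psi} I(\fm)$, viewed as an element of $\Hom_R(\Hom^*_{\sfC}(X,X), I(\fm))$ concentrated in internal degree $0$, through the Serre duality isomorphism \eqref{eq:serre} to obtain a morphism $\gamma\colon X\to FX$ in $\sfC$. I then complete $\gamma$ to an exact triangle $\Si^{-1}FX\xra{} W\xra{} X\xra{\gamma} FX$, or rather rotate it to the form $\Si^{-1}FX\to Y\xra{\beta} X\xra{\gamma} FX$ — let me instead set up the triangle as $X\xra{\alpha} Y\xra{\beta} \Si^{-1}FX\xra{-\Si^{-1}\gamma} \Si X$ so that $X$ sits on the left. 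The claim is that this is an AR-triangle. Condition (3), $\gamma\neq 0$, holds because $\psi$ was chosen nonzero and \eqref{eq:serre} is an isomorphism. For condition (1$'$), the endomorphism ring of $X$ is local by hypothesis (Krull-Schmidt plus $X$ indecomposable). So by the reduction recalled just before the proposition (conditions (2),(3) plus (1$'$) suffice), it remains to verify (2): every morphism $Z'\to \Si^{-1}FX$ that is not a split epimorphism factors through $\beta$, equivalently (rotating) every $W'\to X$ that is not a split epi factors through $\alpha$... let me phrase it on the correct side: I must show every non-split-epimorphism $\phi\colon Z'\to \Si^{-1}FX$ factors through $\beta$.

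The heart of the argument — and the step I expect to be the main obstacle — is exactly this factorization property, which must be extracted from the duality. The mechanism: $\phi$ factors through $\beta$ iff the composite $Z'\xra{\phi}\Si^{-1}FX\xra{-\Si^{-1}\gamma}\Si X$, i.e. $(\Si^{-1}\gamma)\phi$, is zero; and $(\Si^{-1}\gamma)\phi = 0$ in $\Hom_{\sfC}(Z',\Si X)=\Hom^{*}_{\sfC}(Z',X)$-degree-something should correspond under \eqref{eq:serre} (applied with the pair $(X,Z')$, using that $\gamma$ came from $\psi$) to the statement that the $R$-linear functional on $\Hom^*_{\sfC}(X,Z')$ given by "compose with $\phi$, land in $\End$-type group, push to $E$ via the radical quotient, then $\psi$" is zero. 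One shows this functional is zero precisely when $\phi$ is not a split epimorphism: if $\phi$ were a split epi there would be a section realizing the identity of $\Si^{-1}FX$, which after transport contradicts $\gamma\neq 0$; conversely, if $\phi$ is not split epi, then for every $g\colon X\to Z'$ the composite $\phi g$ fails to be... hmm, this needs the Nakayama-type observation that a map $X\to X$ factoring through a non-split-epi into a component is in the radical, hence killed by $E$. So the obstacle is organizing this bookkeeping so that "$\phi$ is not a split epimorphism" translates, via the two-variable naturality of \eqref{eq:serre} and the fact that $\psi$ kills exactly $\fr\cdot\End_{\sfC}(X)$, into the vanishing of the corresponding functional. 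Once (2) is established, the dual construction — feeding a nonzero map out of the Matlis dual into \eqref{eq:serre} in the other variable — produces the AR-triangle $V\to W\to X\to\Si V$ starting at $X$, and since every indecomposable of $\sfC$ admits both, $\sfC$ has AR-triangles. I would also remark that the finiteness needed ($E$ of finite length over $R/\fm$, so that $\Hom_R(E,I(\fm))\neq 0$ and the duality bites) follows because $E$ is a division ring finite-dimensional over the residue field $R/\fm$ — which holds since $\Hom^*_{\sfC}(X,X)_{\text{deg }0}$ is a finitely generated $R$-module in the cases of interest, though for the abstract statement one should either assume this Hom-finiteness or observe that the argument only uses a single nonzero $R$-linear map $E\to I(\fm)$, which exists as soon as $\fm$ acts as zero on some nonzero quotient of $E$.
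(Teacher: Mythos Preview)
Your approach is essentially the paper's: pick a nonzero $R$-linear functional on $\End^*_\sfC(X)$ that kills the (right ideal generated by the) Jacobson radical, transport it through the Serre-duality isomorphism to a map $\gamma\colon X\to FX$, complete to a triangle, and verify the AR conditions via (1$'$), (2), (3). Two remarks on execution. First, the existence of a nonzero map $\End^*_\sfC(X)/I\to I(\fm)$ needs no finiteness hypothesis: over a graded local ring, $I(\fm)$ is an injective cogenerator, so its Matlis dual never kills a nonzero module; your closing worries about finite length of $E$ are unnecessary. Second, your rotations get tangled---the clean set-up is the one you wrote first, $\Si^{-1}FX\to W\to X\xra{\gamma}FX$, which is the AR-triangle \emph{ending} at $X$: condition (2) then asks that a non-split-epi $\phi\colon X'\to X$ satisfy $\gamma\phi=0$, and this drops out immediately from naturality of \eqref{eq:serre} because postcomposition by such a $\phi$ carries $\Hom^*_\sfC(X,X')$ into the radical ideal on which the chosen functional vanishes; condition (1$'$) concerns $\Si^{-1}FX$, which is indecomposable since $F$ is an equivalence. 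Finally, for the AR-triangle \emph{starting} at $X$ there is no need for a separate ``dual construction'': the paper simply reruns the same argument with $F^{-1}\Si X$ in place of $X$.
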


\begin{proof} 
  Let $F$ be a Serre functor for $\sfC$ and $X$ an indecomposable
  object in $\sfC$. The ring $\End_{\sfC}(X)$ is thus local; let $J$
  be its maximal ideal and $I$ the right ideal of $\End^{*}_{\sfC}(X)$
  that it generates. Choose a nonzero morphism
  $\Hom^*_\sfC(X,X)/I\to I(\fm)$ and let $\gamma\colon X\to FX$ be the
  corresponding morphism in $\sfC$ provided by Serre
  duality~\eqref{eq:serre}. We claim that the induced exact triangle
\[ \Si^{-1}FX \to W\to X\xra{\gamma}FX
\] is an AR-triangle. Indeed, by construction, if $X'\to X$ is not a
split epimorphism, the composition $X'\to X\xra{\gamma}FX$ is zero.
Moreover $X$ is indecomposable so is $FX$.

Applying this construction to $F^{-1}\Si X$ yields an AR-triangle
starting at $X$.
\end{proof}

Let $R$ be a graded commutative ring.  An $R$-linear triangulated
category $\sfC$ is \emph{noetherian} if the $R$-module
$\Hom_\sfC^*(X,Y)$ is noetherian for all $X,Y$ in $\sfC$. This
property implies that for an object $X$ in $\tors_{\fp}\sfC$ the
$R_\fp$-module $E^{*}:=\End^*_{\tors_{\fp}\sfC}(X)$ is of finite
length. Therefore the graded ring $E^{*}$ is artinian, and so $E^{0}$
is an Artin algebra over $R_\fp^0$; see
\cite[Theorem~1.5.5]{Bruns/Herzog:1998a}.  In particular, the
idempotent completion of $\tors_{\fp}\sfC$ is a Krull-Schmidt
category.

\begin{corollary}
\label{co:noetherian} Let $\sfT$ be a compactly generated $R$-linear
triangulated category with $\sfT^c$ noetherian. If $\sfT$ is
Gorenstein, then $(\gam_\fp\sfT)^c$ has AR-triangles for $\fp$ in
$\Spec R$.
\end{corollary}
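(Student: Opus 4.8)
The plan is to assemble this corollary from three ingredients already in place: the local Serre duality supplied by Corollary~\ref{co:gorenstein-serre}, the existence-of-AR-triangles criterion of Proposition~\ref{pro:AR-existence}, and the remark immediately preceding the corollary, which records that noetherianity of $\sfT^c$ forces the idempotent completion of $\tors_\fp\sfC$ to be Krull--Schmidt. The first step is to set $\sfC:=\sfT^c$; by hypothesis this is an essentially small $R$-linear triangulated category and it is noetherian. By Remark~\ref{rem:compacts-localisation} there is, up to passing to idempotent completions, a triangle equivalence $\tors_\fp\sfC\xra{\sim}(\gam_\fp\sfT)^c$, so it suffices to produce AR-triangles in (the idempotent completion of) $\tors_\fp\sfC$.

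Next I would invoke the Gorenstein hypothesis on $\sfT$: by Corollary~\ref{co:gorenstein-serre}, $\sfC=\sfT^c$ satisfies local Serre duality, which by definition means there is an $R$-linear equivalence $F\colon\sfC\xra\sim\sfC$ and integers $d(\fp)$ such that $\Si^{-d(\fp)}F_\fp\colon\tors_\fp\sfC\xra\sim\tors_\fp\sfC$ is a Serre functor for the $R_\fp$-linear category $\tors_\fp\sfC$ in the sense of \eqref{eq:serre}. Here $R_\fp$ is a graded commutative \emph{local} ring, with maximal ideal $\fp R_\fp$, so we are exactly in the setting of Proposition~\ref{pro:AR-existence}.

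The one genuine point to check is the Krull--Schmidt hypothesis of Proposition~\ref{pro:AR-existence}. A Serre functor and AR-triangles live most naturally on an idempotent-complete category, and in general $\tors_\fp\sfC$ need not be idempotent complete; but the remark preceding the corollary shows that, because $\sfC$ is noetherian, for each $X$ in $\tors_\fp\sfC$ the graded endomorphism ring $\End^*_{\tors_\fp\sfC}(X)$ has finite length over $R_\fp$, hence is artinian, so $\End^0_{\tors_\fp\sfC}(X)$ is an Artin algebra; therefore the idempotent completion $\widehat{\tors_\fp\sfC}$ is a Krull--Schmidt category. The Serre functor $\Si^{-d(\fp)}F_\fp$ extends canonically to an $R_\fp$-linear equivalence of $\widehat{\tors_\fp\sfC}$ satisfying the same duality isomorphism \eqref{eq:serre} (Matlis duality commutes with finite direct sums, so the identity passes to summands). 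Proposition~\ref{pro:AR-existence} then yields that $\widehat{\tors_\fp\sfC}$ has AR-triangles, and via the equivalence $\widehat{\tors_\fp\sfC}\simeq(\gam_\fp\sfT)^c$ of Remark~\ref{rem:compacts-localisation} (note $(\gam_\fp\sfT)^c$ is already idempotent complete, being the compacts of a compactly generated category), this gives AR-triangles in $(\gam_\fp\sfT)^c$, as desired.

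The main obstacle is purely bookkeeping rather than conceptual: one must be careful that $F$ restricts to and induces a genuine Serre functor on $\tors_\fp\sfC$ (this is where the \emph{local} Serre duality definition, rather than the bare Gorenstein definition, is used) and that the transition to idempotent completions is harmless — i.e. that the category one lands in when applying Proposition~\ref{pro:AR-existence} is literally $(\gam_\fp\sfT)^c$. Both are handled by Remark~\ref{rem:compacts-localisation} together with the finite-length observation, so no substantial new argument is needed.
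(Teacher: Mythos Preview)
Your proposal is correct and follows essentially the same route as the paper: the paper's proof simply observes that noetherianity of $\sfT^c$ makes $(\gam_\fp\sfT)^c$ Krull--Schmidt (via the paragraph immediately preceding the corollary together with Remark~\ref{rem:compacts-localisation}) and then combines Proposition~\ref{pr:gorenstein-serre} with Proposition~\ref{pro:AR-existence}. Your version is more explicit about the passage through idempotent completions and the restriction of the Serre functor, but the underlying argument is identical.
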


\begin{proof} 
  As explained above, the assumption that $\sfT^c$ is noetherian
  implies that $(\gam_\fp\sfT)^c$ is a Krull-Schmidt category. Thus
  the assertion follows by combining
  Propositions~\ref{pr:gorenstein-serre} and \ref{pro:AR-existence}.
\end{proof}
 
Next we consider local Serre duality for $\sfD^b(\mod G)$ and $\stmod
G$. Recall from Lemma~\ref{le:KInj-p-local} that localisation at
$\fp\in\Proj H^*(G,k)$ induces a triangle equivalence $\gam_\fp(\KInj
G)\xra{\sim}\gam_\fp(\StMod G)$. Using
Remark~\ref{rem:compacts-localisation}, this yields (up to direct
summands) triangle equivalences
\[ \tors_{\fp}(\sfD^b(\mod G))\xra{\sim}\gam_{\fp} (\KInj
G)^c\xra{\sim}\gam_{\fp} (\StMod G)^c \xra{\sim}\tors_{\fp}(\stmod G).
\]

The result below contains Theorem~\ref{thm:main1} in the introduction.

\begin{theorem}
\label{thm:stmodAR} Let $G$ be a finite group scheme over a field
$k$. Then the $H^*(G,k)$-linear triangulated category
$\sfD:=\sfD^b(\mod G)$ satisfies local Serre duality. Said otherwise,
given $\fp$ in $\Spec R$ and with $d$ the Krull dimension of
$H^*(G,k)/\fp$, for each $M$ in $\tors_{\fp}\sfD$ and $N$ in
$\sfD_{\fp}$, there are natural isomorphisms
\[\Hom_{H^*(G,k)}(\Hom^{*}_{\sfD_{\fp}}(M,N),I(\fp))
  \cong\Hom_{\sfD_{\fp}}(N,\Omega^{d}\delta_G\otimes_k M)\,.\] In
particular, $\tors_{\fp}\sfD$ has AR-triangles.
\end{theorem}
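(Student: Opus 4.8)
The plan is to deduce Theorem~\ref{thm:stmodAR} from the Gorenstein property of $\StMod G$ (Corollary~\ref{cor:stmod-serre}), together with the general machinery relating the Gorenstein property to local Serre duality and to the existence of Auslander--Reiten triangles. First I would invoke Corollary~\ref{cor:stmod-serre}: the $H^*(G,k)$-linear triangulated category $\StMod G$ is Gorenstein, with global Serre functor $F=\delta_G\otimes_k-$ and shifts $d(\fp)=\dim H^*(G,k)/\fp$ for $\fp\in\Proj H^*(G,k)$. Here one must also account for the prime $\fm=H^{\ges 1}(G,k)$, but by Lemma~\ref{le:not-m} we have $\gam_\fm(\StMod G)=\{0\}$, so $\fm\notin\supp_{H^*(G,k)}(\StMod G)$ and it plays no role; thus the support is exactly $\Proj H^*(G,k)$ and the shift formula applies throughout.

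Next I would apply Corollary~\ref{co:gorenstein-serre} (itself a combination of Proposition~\ref{pr:gorenstein-serre} and Remark~\ref{rem:compacts-localisation}) to conclude that $(\StMod G)^c=\stmod G$ satisfies local Serre duality. Unwinding the definition: for each $\fp$ and each $X,Y$ in $\tors_\fp(\stmod G)$ there is a natural isomorphism
\[
\Hom_{H^*(G,k)}(\Hom^*_{(\stmod G)_\fp}(X,Y),I(\fp))\cong \Hom_{(\stmod G)_\fp}(Y,\Si^{-d(\fp)}F_\fp X),
\]
and since the suspension on $\StMod G$ is $\Omega^{-1}$ and $F=\delta_G\otimes_k-$, the Serre functor $\Si^{-d(\fp)}F_\fp$ is $M\mapsto \Omega^{d}\delta_G\otimes_k M$ with $d=\dim H^*(G,k)/\fp$. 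To transfer this to $\sfD:=\sfD^b(\mod G)$ I would use the chain of $R$-linear triangle equivalences recorded just before the theorem,
\[
\tors_\fp(\sfD^b(\mod G))\xra{\sim}\gam_\fp(\KInj G)^c\xra{\sim}\gam_\fp(\StMod G)^c\xra{\sim}\tors_\fp(\stmod G),
\]
which comes from Lemma~\ref{le:KInj-p-local} and Remark~\ref{rem:compacts-localisation}; since these equivalences are $R$-linear, local Serre duality transports along them, giving the displayed isomorphism for $M\in\tors_\fp\sfD$ and $N\in\sfD_\fp$. One small point to check is that the equivalence $\gam_\fp(\KInj G)^c\xra{\sim}\sfD^b(\mod G)$-side identifies the Nakayama functor on one side with the Nakayama functor on the other; this is exactly the compatibility of $\nu$ with the equivalence $\KInj G\xra{\sim}\KInj G$ and $\sfD^b(\mod G)\xra{\sim}\sfD^b(\mod G)$ noted in Section~\ref{sec:gorenstein} (before Theorem~\ref{th:KInj}), together with the fact that the $\KInj$-to-$\StMod$ comparison functor $\pi$ is a tensor functor over $k$ and hence commutes with $\delta_G\otimes_k-$.

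For the final clause---that $\tors_\fp\sfD$ has AR-triangles---I would appeal to Corollary~\ref{co:noetherian}, for which I must verify that $\sfD^b(\mod G)$ is noetherian as an $H^*(G,k)$-linear category, i.e. that $\Hom^*_{\sfD}(M,N)$ is a noetherian $H^*(G,k)$-module for all $M,N$. This follows from the Friedlander--Suslin finite generation theorem: for $M,N$ finite dimensional, $\Ext^*_G(M,N)$ is a finitely generated $H^*(G,k)$-module, and $\Hom^*_{\sfD^b(\mod G)}(M,N)$ is built from finitely many such Ext-groups, hence noetherian. Given noetherianness, Corollary~\ref{co:noetherian} (via the Krull--Schmidt property of $(\gam_\fp\sfT)^c$ and Propositions~\ref{pr:gorenstein-serre} and~\ref{pro:AR-existence}) yields AR-triangles in $(\gam_\fp(\StMod G))^c\simeq\tors_\fp\sfD$. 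The main obstacle, to the extent there is one, is bookkeeping rather than mathematics: one must make sure the various $R$-linear equivalences in Remark~\ref{rem:compacts-localisation} and Lemma~\ref{le:KInj-p-local} are genuinely compatible with the Nakayama functor and with the $H^*(G,k)$-actions, so that the shift $d(\fp)$ and the Serre functor match up precisely on the level of $\sfD^b(\mod G)$; once that is in place, the statement is a direct specialization of the general results of Sections~\ref{sec:gorenstein} and~\ref{sec:serre}.
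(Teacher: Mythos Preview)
Your route via $\StMod G$ and Corollary~\ref{cor:stmod-serre} does not cover the full statement. The theorem is about $\sfD=\sfD^b(\mod G)$ and explicitly ranges over $\fp\in\Spec H^*(G,k)$, which includes the irrelevant ideal $\fm=H^{\ges 1}(G,k)$. You dismiss $\fm$ on the grounds that $\gam_\fm(\StMod G)=0$, but that is precisely the point: $\fm$ is not in the support of $\StMod G$, yet it \emph{is} in the support of $\KInj G$, and $\tors_\fm(\sfD^b(\mod G))$ is nonzero (it contains, e.g., $kG$ and more generally the perfect complexes). For $\fp=\fm$ the displayed isomorphism is the classical Serre duality for perfect complexes, with Serre functor $\delta_G\otimes_k-$; this is exactly what Theorem~\ref{th:KInj} supplies via the Krause--Le isomorphism $\gam_\fm\comp F\cong T_\fm$. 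By passing through $\StMod G$ you have quotiented out this information and cannot recover it. The paper's proof avoids this by applying Proposition~\ref{pr:gorenstein-serre} directly to $\sfT=\KInj G$, whose compacts are already $\sfD^b(\mod G)$; no transfer along the equivalences of Lemma~\ref{le:KInj-p-local} is needed at all.

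A secondary point: even for $\fp\in\Proj H^*(G,k)$, invoking Corollary~\ref{co:gorenstein-serre} gives the isomorphism only for $N\in\tors_\fp\sfD$, whereas the theorem asserts it for $N\in\sfD_\fp$. To get that stronger range you need Proposition~\ref{pr:gorenstein-serre} itself (where $Y$ lies in $\sfT_\fp$), again applied with $\sfT=\KInj G$ so that $\sfT^c=\sfD$.
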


\begin{proof} 
  The first assertion follows from Theorem~\ref{th:KInj} and
  Proposition~\ref{pr:gorenstein-serre}. The existence of AR-triangles
  then follows from Corollary~\ref{co:noetherian}, as $\sfD$ is
  noetherian.
\end{proof}

\subsection*{AR-components and periodicity} 

The existence of AR-triangles for a triangulated category $\sfC$ gives
rise to an \emph{AR-quiver}; see for example
\cite{Happel/Preiser/Ringel:1982a, Liu:2010a}. The vertices are given
by the isomorphism classes of indecomposable objects in $\sfC$ and an
arrow $[X]\to [Y]$ exists if there is an irreducible morphism
$X\to Y$.

In the context of $\stmod G$, one can describe part of the structure
of the AR-quiver of the $\fp$-local $\fp$-torsion objects as the Serre
functor is periodic.

\begin{proposition} 
  Let $G$ be a finite group scheme over a field $k$. Fix a point $\fp$
  in $\Proj H^*(G,k)$ and set $d=\dim H^*(G,k)/\fp$. Then the Serre
  functor
  \[\Omega^{d}\nu \colon \tors_{\fp}(\stmod
    G)\xra{\sim}\tors_{\fp}(\stmod G)\] is periodic, that is,
  $(\Omega^{d}\nu)^r=\id$ for some positive integer $r$.
\end{proposition}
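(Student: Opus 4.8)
The plan is to exhibit a common period for the two commuting autoequivalences $\Omega$ and $\nu$ of $\sfC:=\tors_{\fp}(\stmod G)$ and then combine them; here $\nu=\delta_G\otimes_k-$, so that $\Omega^d\nu$ is the Serre functor of Theorem~\ref{thm:stmodAR}. First I would record periodicity of $\nu$: since the character group of $G$ is finite, \eqref{eq:nu-periodic} supplies a positive integer, call it $e$, with $\nu^e\cong\id$ on $\Mod G$. This isomorphism descends to $\StMod G$ and to $\stmod G$, and since $\nu$ is an $H^*(G,k)$-linear equivalence, the remark preceding Remark~\ref{rem:compacts-localisation} shows that it induces an autoequivalence of $\sfC$ with $\nu^e\cong\id$ there as well.

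Next I would produce a period for $\Omega$ on $\sfC$. As $\fp$ lies in $\Proj H^*(G,k)$ it does not contain $\fm=H^{\ges 1}(G,k)$, so one may choose a homogeneous element $z\in H^*(G,k)\setminus\fp$ of positive degree $n$. By Remark~\ref{rem:compacts-localisation} the category $\sfC$ is equivalent, up to direct summands, to $\gam_{\fp}(\StMod G)^c$, and every object of $\gam_{\fp}(\StMod G)$ is $\fp$-local; Lemma~\ref{le:periodicity} then says that multiplication by $z$ is a natural isomorphism $\id\xra{\ \sim\ }\Sigma^n=\Omega^{-n}$ on $\gam_{\fp}(\StMod G)$, and this restricts to a natural isomorphism $\Omega^n\cong\id$ on $\sfC$. (Alternatively this can be checked directly in $(\stmod G)_\fp$, where $z$ acts invertibly on all graded Hom-modules, so that the conclusion of Lemma~\ref{le:periodicity} holds verbatim.)

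Finally, $\nu$ is tensoring with the invertible object $\delta_G$, hence commutes with the suspension; thus $\Omega\nu\cong\nu\Omega$ and therefore $(\Omega^d\nu)^r\cong\Omega^{dr}\nu^r$ for every $r\ge 1$. Taking $r$ to be a common multiple of $e$ and $n$, for instance $r=en$, one gets $\nu^r\cong\id$ because $e\mid r$, and $\Omega^{dr}\cong\id$ because $n\mid r$ and hence $n\mid dr$. Consequently $(\Omega^d\nu)^r\cong\id$ on $\sfC$, which is the assertion. There is no genuine obstacle beyond the bookkeeping; the one point requiring a little care is the transfer of Lemma~\ref{le:periodicity} from $\StMod G$ to the small category $\sfC$ via Remark~\ref{rem:compacts-localisation}, together with checking that $\nu$ is $H^*(G,k)$-linear so that it passes to $\tors_{\fp}(\stmod G)$ in the first place.
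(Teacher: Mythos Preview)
Your argument is correct and follows essentially the same approach as the paper: both use \eqref{eq:nu-periodic} for a period of $\nu$, Lemma~\ref{le:periodicity} (applied to a homogeneous element outside $\fp$) for a period of $\Omega$ on $\fp$-local objects, and then combine these using that $\nu$ and $\Omega$ commute. The paper's version is terser, but your added care in transferring Lemma~\ref{le:periodicity} to $\tors_{\fp}(\stmod G)$ via Remark~\ref{rem:compacts-localisation} is a reasonable elaboration of the same step.
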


\begin{proof} 
  Lemma~\ref{le:periodicity} and \eqref{eq:nu-periodic} provide an
  integer $r\ge 0$ such that $\nu^rM\cong M$ and $\Omega^r M\cong M$
  for $M$ in $\tors_{\fp}(\stmod G)$. Thus $(\Omega^{d}\nu)^r=\id$,
  since $\nu$ and $\Omega$ commute.
\end{proof}

This has the following consequence.

\begin{corollary} 
  Every connected component of the AR-quiver of
  $\tors_{\fp}(\stmod G)$ is a stable tube in case it is infinite; and
  otherwise, it is of the form $\bbZ \Delta/ U$, where $\Delta$ is a
  quiver of Dynkin type and $U$ is a group of automorphisms of
  $\bbZ \Delta$.
\end{corollary}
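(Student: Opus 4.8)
The plan is to combine the existence of Auslander--Reiten triangles in $\sfC:=\tors_{\fp}(\stmod G)$ (Theorem~\ref{thm:stmodAR}), the periodicity of its Serre functor (the preceding Proposition), and the classical structure theory of stable translation quivers. First I would record that, being a Krull--Schmidt triangulated category with AR-triangles, $\sfC$ has an AR-quiver $\Gamma$ which is a valued stable translation quiver without loops: the AR-translation $\tau$ is an automorphism of $\Gamma$ defined on every vertex, and the arrows (with their valuations) $[Y]\to[X]$ are recorded by the middle term of the AR-triangle ending at $X$; see \cite{Happel:1988a, Happel/Preiser/Ringel:1982a, Liu:2010a}.

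Next I would check that every vertex of $\Gamma$ is $\tau$-periodic. By the proof of Proposition~\ref{pro:AR-existence} the AR-translation of $\sfC$ is $\tau\cong\Si^{-1}\circ F$, where $F$ is the Serre functor and $\Si=\Omega^{-1}$ is the suspension; by Theorem~\ref{thm:stmodAR} this reads $\tau\cong\Omega\circ(\Omega^{d}\nu)$. The preceding Proposition gives that $\Omega^{d}\nu$ is periodic, and Lemma~\ref{le:periodicity}---applied in $\sfC$, whose objects are $\fp$-local---gives that $\Omega$ is periodic; since $\Omega$ commutes with $\nu$, it follows that $\tau^{s}\cong\id$ for some $s\ge 1$, so every vertex of $\Gamma$ has period dividing $s$. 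In particular $\Gamma$ is not of the form $\bbZ T$ for a tree $T$, since on such a quiver $\tau$ acts freely.

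Now I would invoke the finiteness that pins the shape down. Since $\sfD^{b}(\mod G)$ is noetherian, for an indecomposable $X$ in $\sfC\simeq\tors_{\fp}(\sfD^{b}(\mod G))$ the ring $\End_{\sfC}(X)$ is of finite length (cf.\ \cite[Theorem~1.5.5]{Bruns/Herzog:1998a}; this is the endofiniteness of Proposition~\ref{pr:endofinite}), and its length defines a positive, $\tau$-invariant, bounded, subadditive function $d$ on each connected component $\Gamma_{0}$ of $\Gamma$. With $d$ at hand the structure theory applies: Riedtmann's theorem writes $\Gamma_{0}\cong\bbZ\Delta/\Pi$ for a valued tree $\Delta$ and an admissible automorphism group $\Pi$, and $d$ descends to a bounded subadditive function on $\Delta$, so Vinberg's lemma in the form of Happel--Preiser--Ringel \cite{Happel/Preiser/Ringel:1982a}---precisely the situation of periodic vertices treated there---forces $\Delta$ to be of Dynkin type (the valued types $B$, $C$, $F_{4}$, $G_{2}$ allowed) or $\Delta=A_{\infty}$, while $A_{\infty}^{\infty}$, $D_{\infty}$ and the Euclidean trees are excluded. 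If $\Delta$ is Dynkin then $\bbZ\Delta$ has no $\tau$-periodic vertex, so $\Pi\ne 1$ and $\Gamma_{0}=\bbZ\Delta/\Pi$ is finite; writing $U:=\Pi$ this is the second alternative of the statement. If $\Delta=A_{\infty}$ then $\Aut(\bbZ A_{\infty})=\langle\tau\rangle$, so $\Pi=\langle\tau^{n}\rangle$ for some $n\ge 1$ and $\Gamma_{0}$ is an infinite stable tube. This is the asserted dichotomy.

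I expect the main obstacle to lie in the exclusion of the infinite non-tube classes in the previous step. Periodicity of $\tau$ already rules out $\bbZ T$ itself, but to rule out the admissible quotients $\bbZ A_{\infty}^{\infty}/\Pi$ and $\bbZ D_{\infty}/\Pi$ one must know that $d$, although bounded, is \emph{not} additive on $\Gamma_{0}$---on those quivers every bounded $\tau$-invariant subadditive function is forced to be additive---and it is this failure of additivity that powers Vinberg's lemma. So the genuine work is to show that the length function on $\sfC$ is subadditive but not everywhere additive, together with the more routine verification that $\Gamma$ is a bona fide valued stable translation quiver (no loops, $\tau$ defined everywhere) so that the Riedtmann and Happel--Preiser--Ringel results apply verbatim. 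Once the preceding Proposition is available, the remainder is formal.
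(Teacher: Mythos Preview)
Your approach is essentially the paper's: use the periodicity of the Serre functor (and hence of the AR-translation $\tau=\Si^{-1}F$) together with the structure theory of stable translation quivers. The paper's proof is a one-line invocation of \cite[Theorem~5.5]{Liu:2010a} (with a pointer to \cite{Happel/Preiser/Ringel:1982a}), treating that result as a black box; you instead sketch its internals via Riedtmann's theorem and the Happel--Preiser--Ringel analysis.

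Two small corrections to your sketch. First, the length function is \emph{not} bounded on an infinite component---on a stable tube it grows without bound as one moves away from the mouth---and boundedness is not what drives the argument; in the Happel--Preiser--Ringel framework, unboundedness of a subadditive function already forces the tree class to be $A_\infty$, while the bounded case is handled by a separate analysis. Second, your final-paragraph worry about having to verify non-additivity of the length function in order to exclude $A_\infty^\infty$, $D_\infty$, and the Euclidean trees is precisely the case analysis that the cited theorems already carry out under the standing hypothesis of $\tau$-periodicity together with a subadditive length function with positive integer values. So once the preceding Proposition is in hand there is genuinely nothing further to check, and the paper is right to stop at the citation.
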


\begin{proof} 
  Since the Serre functor on $\tors_{\fp}(\stmod G)$ is periodic, the
  desired result follows from \cite[Theorem~5.5]{Liu:2010a}; see also
  \cite{Happel/Preiser/Ringel:1982a}.
\end{proof}

The preceding result may be seen as a first step in the direction of
extending the results of Farnsteiner's~\cite[\S3]{Farnsteiner:2007a}
concerning $\stmod G$ to $\tors_{\fp}(\stmod G)$ for a general
(meaning, not necessarily closed) point $\fp$ of $\Proj H^{*}(G,k)$.

\appendix

\section{Injective modules at closed points}

In this section we collect some remarks concerning injective hulls
over graded rings, for use in Section~\ref{sec:Duality}. Throughout
$k$ will be a field and $A:=\bigoplus _{i\ges 0} A^{i}$ will be a
finitely generated graded commutative $k$-algebra with $A^{0}=k$; we
have in mind $H^{*}(G,k)$, for a finite group scheme $G$ over $k$.

As usual $\Proj A$ denotes the homogeneous prime ideals in $A$ that do
not contain the ideal $A^{\ges 1}$. Given a point $\fp$ in $\Proj A$,
we write $k(\fp)$ for the graded residue field at $\fp$; this is the
homogeneous field of fractions of the graded domain $A/\fp$. Observe
that $k(\fp)^{0}$ is a field extension of $k$ and $k(\fp)$ is of the
form $k(\fp)^{0}[t^{\pm 1}]$ for some indeterminate $t$ over
$k(\fp)^{0}$; see, for example,
\cite[Lemma~1.5.7]{Bruns/Herzog:1998a}.

\begin{lemma}
\label{lem:residue} The degree of $k(\fm)^{0}/k$ is finite for any
closed point $\fm$ in $\Proj A$.
\end{lemma}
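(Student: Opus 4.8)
The plan is to compute the transcendence degree of the graded residue field and then invoke a standard fact about algebraic subextensions of finitely generated field extensions. Write $B:=A/\fm$; it is a graded domain, finitely generated as a $k$-algebra, with $B^{0}=k$ (since $A^{0}=k$ is a field and $\fm$ is a proper homogeneous prime). The first point I would establish is that $\dim B\les 1$, and here is where the hypothesis that $\fm$ is a closed point of $\Proj A$ is used: every proper homogeneous ideal of $A$ lies in $A^{\ges 1}$ (a homogeneous ideal not contained in $A^{\ges 1}$ meets $A^{0}=k$ nontrivially, hence is all of $A$), so a homogeneous prime belongs to $\Proj A$ precisely when it is \emph{properly} contained in $A^{\ges 1}$. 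Thus a chain $\fm\subsetneq\fq_{1}\subsetneq\fq_{2}$ of homogeneous primes of $A$ would force $\fq_{1}\in\Proj A$ to strictly contain $\fm$, contradicting closedness; since $\dim B$ is realised by a chain of homogeneous primes of $B$, no such chain exists and $\dim B\les 1$.

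Next I would pass to the graded field of fractions. Since $k(\fm)$ is a localisation of $B$, we have $\operatorname{Frac}(B)=\operatorname{Frac}(k(\fm))$, and by the structure recalled just before the lemma, $k(\fm)=k(\fm)^{0}[t^{\pm 1}]$ for an indeterminate $t$ over $k(\fm)^{0}$; hence $\operatorname{Frac}(B)=k(\fm)^{0}(t)$. Comparing transcendence degrees over $k$ and using that $\operatorname{trdeg}_{k}\operatorname{Frac}(B)=\dim B\les 1$ for the affine domain $B$, we obtain $\operatorname{trdeg}_{k}k(\fm)^{0}=0$; that is, $k(\fm)^{0}$ is algebraic over $k$.

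Finally I would upgrade ``algebraic'' to ``finite''. As $B$ is a finitely generated $k$-algebra, $Q:=\operatorname{Frac}(B)$ is a finitely generated field extension of $k$; fixing $x\in Q$ transcendental over $k$, the extension $Q/k(x)$ is finitely generated and algebraic, hence finite, say $[Q:k(x)]=N$. Because $k(\fm)^{0}/k$ is algebraic, $x$ remains transcendental over $k(\fm)^{0}$, so after clearing denominators and comparing powers of $x$ one sees that a $k$-linearly independent subset of $k(\fm)^{0}$ stays $k(x)$-linearly independent inside $Q$; therefore $\dim_{k}k(\fm)^{0}\les N<\infty$, which is the claim. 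The only step that genuinely uses the hypothesis is the identification in the first paragraph of closed points of $\Proj A$ with primes $\fm$ satisfying $\dim A/\fm\les 1$; everything after that is routine commutative algebra. (One can also argue via graded Noether normalisation, which exhibits $B$ as a finite module over a graded polynomial subring $k[z]$, whence $k(\fm)$ is a finitely generated graded module over the graded field $k[z^{\pm 1}]$ and so has finite-dimensional homogeneous components over $k$, giving the same conclusion.)
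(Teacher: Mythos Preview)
Your proof is correct. Both you and the paper begin from the observation that $\dim(A/\fm)=1$ for a closed point $\fm\in\Proj A$, but you take a slightly longer route to finish: you convert this into a transcendence-degree statement to conclude $k(\fm)^{0}/k$ is algebraic, and then run a separate linear-independence argument to upgrade algebraic to finite. The paper instead applies graded Noether normalisation directly: $A/\fm$ is a finite module over a polynomial subring $k[t]$, and inverting $t$ exhibits $k(\fm)$ as a finite module over the graded field $k[t^{\pm 1}]$, so $k(\fm)^{0}/k$ is finite in one stroke. This is exactly the alternative you sketch in your closing parenthetical, so you already have the paper's argument in hand; it is shorter and avoids the detour through transcendence degree and the third paragraph entirely.
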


\begin{proof} 
  One way to verify this is as follows: The Krull dimension of $A/\fm$
  is one so, by Noetherian normalisation, there exists a subalgebra
  $k[t]$ of $A/\fm$ where $t$ is an indeterminate over $k$ and the
  $A/\fm$ is finitely generated $k[t]$-module. Thus, inverting $t$,
  one gets that $(A/\fm)_{t}$ is a finitely generated module over the
  graded field $k[t^{\pm 1}]$, and hence isomorphic to $k(\fm)$. The
  finiteness of the extension $k(\fm)/k[t^{\pm 1}]$ implies that the
  extension $k(\fm)^{0}/k$ of fields is finite.
\end{proof}

The result below is familiar; confer \cite[Proposition
3.6.16]{Bruns/Herzog:1998a}.

\begin{lemma}
\label{lem:hull} Let $A$ be as above, let $\fm$ be a closed point in
$\Proj A$ and set $R:=A_{\fm}$. The $R$-submodule $I:=\bigcup_{i\ges
0}\Hom^{*}_{k}(R/\fm^{i},k)$ of $\Hom_{k}^{*}(R,k)$ is the injective
hull of $k(\fm)$, and for any $\fm$-torsion $R$-module $N$, there is a
natural isomorphism
\[ \Hom_{R}(N,I)\cong \Hom_{k}(N,k)\,.
\]
\end{lemma}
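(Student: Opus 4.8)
\textbf{Proof proposal for Lemma~\ref{lem:hull}.}
The plan is to produce the injective hull of $k(\fm)$ explicitly and then read off the duality isomorphism. First I would set $\fm R$ for the maximal ideal of the graded local ring $R=A_\fm$ (abusing notation and still writing $\fm$), and observe that $R$ is a graded local noetherian ring with residue field $k(\fm)$, and that by Lemma~\ref{lem:residue} the extension $k(\fm)^0/k$ is finite, so $k(\fm)$ is a finite-dimensional graded $k$-vector space in each degree — in fact $k(\fm)=k(\fm)^0[t^{\pm 1}]$ with $k(\fm)^0/k$ finite. The key point is that $\Hom_k^*(R,k)$, the graded $k$-dual, is an injective $R$-module (graded Matlis duality turns projectives/frees into injectives), and the submodule $I=\bigcup_i \Hom_k^*(R/\fm^i,k)$ is exactly its $\fm$-torsion part, i.e. the largest $\fm$-torsion submodule.

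The main steps would be: (1) Check $I$ is injective. Since $R$ is noetherian, a submodule of $\Hom_k^*(R,k)$ being injective can be verified via Baer's criterion, or more cleanly by noting $I=\varinjlim_i \Hom_k^*(R/\fm^i,k)$ and each $\Hom_k^*(R/\fm^i,k)$ is the $k$-dual of a finite-length $R$-module, hence an injective object of the category of $\fm$-torsion $R$-modules; one then invokes that injectives in the torsion category remain injective in $\Mod R$ because $\fm$-torsion modules are closed under essential extensions here (every element is killed by a power of $\fm$). (2) Check $I$ is an essential extension of $k(\fm)$. The socle of $\Hom_k^*(R/\fm^i,k)$ in degree-shifted form is $\Hom_k^*(R/\fm,k)=\Hom_k^*(k(\fm),k)\cong k(\fm)$ up to a degree twist (using finiteness of $k(\fm)^0/k$ to identify the $k$-dual of the graded field with a shift of itself — this is where $k(\fm)=k(\fm)^0[t^{\pm1}]$ and $\dim_k k(\fm)^0<\infty$ enter). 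Any nonzero submodule of $I$ meets this socle, so the inclusion $k(\fm)\hookrightarrow I$ is essential. Together (1) and (2) say $I$ is \emph{the} injective hull of $k(\fm)$. (3) For the duality formula: for any $\fm$-torsion $R$-module $N$, write $N=\varinjlim N_j$ with $N_j$ finitely generated, hence of finite length over $R$ and finite-dimensional over $k$. Then
\[
\Hom_R(N,I)=\Hom_R(\varinjlim N_j,\varinjlim \Hom_k^*(R/\fm^i,k)) \cong \varprojlim_j \Hom_R(N_j,I),
\]
and for a finite-length $N_j$, annihilated by $\fm^i$ say, one has $\Hom_R(N_j,I)=\Hom_R(N_j,\Hom_k^*(R/\fm^i,k))\cong\Hom_k(N_j,k)$ by Hom-tensor adjunction (since $R/\fm^i\otimes_R N_j=N_j$). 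Passing to the limit gives $\Hom_R(N,I)\cong\Hom_k(N,k)$, naturally in $N$.

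The step I expect to be the main obstacle is step (2), keeping the \emph{grading} bookkeeping straight: unlike the ungraded Matlis theory, $k(\fm)$ is not a field but a graded field $k(\fm)^0[t^{\pm1}]$, so ``$\Hom_k^*(k(\fm),k)\cong k(\fm)$'' holds only up to an unspecified internal degree shift, and one must be careful that the $k$-dual is taken degreewise (each graded piece finite-dimensional, which is exactly what Lemma~\ref{lem:residue} guarantees) so that $\Hom_k^*(-,k)$ is an exact, faithful, socle-detecting duality on finite-length modules. Once that is set up, essentiality and the computation of $\Hom_R(N,I)$ are formal. Since the paper only needs $I(\fm)$ up to isomorphism and the natural identification $\Hom_R(N,I)\cong\Hom_k(N,k)$, I would not track the twist explicitly beyond remarking it exists.
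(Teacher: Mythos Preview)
Your proposal is correct, but the paper's argument is organised differently and is shorter. The paper first observes that $\Hom_k^*(R,k)$ is an injective $R$-module (by the adjunction $\Hom_R(-,\Hom_k^*(R,k))\cong\Hom_k^*(-,k)$), so its $\fm$-torsion submodule $I$ is injective over the noetherian ring $R$; this replaces your Baer/colimit discussion in step~(1). For the duality isomorphism the paper does not pass through limits as in your step~(3): since $N$ is $\fm$-torsion, any $R$-linear map $N\to\Hom_k^*(R,k)$ already lands in $I$, so $\Hom_R^*(N,I)=\Hom_R^*(N,\Hom_k^*(R,k))\cong\Hom_k^*(N,k)$ in one line of adjunction. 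Finally, rather than checking essentiality directly as in your step~(2), the paper notes that an $\fm$-torsion injective is a direct sum of shifts of $I(\fm)$, then specialises the duality isomorphism to $N=k(\fm)$ together with the computation $\Hom_k^*(k(\fm),k)\cong k(\fm)$ (this is where Lemma~\ref{lem:residue} enters) to get $\Hom_R^*(k(\fm),I)\cong k(\fm)$, forcing $I\cong I(\fm)$ with no shift. So the paper reverses your order of steps~(2) and~(3) and thereby avoids both the socle/essentiality argument and the limit bookkeeping; your route works too, but is more laboured, and the degree-shift worry you flag in step~(2) is resolved automatically in the paper's approach by the graded isomorphism $\Hom_R^*(k(\fm),I)\cong k(\fm)$.
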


\begin{proof} Set $K=k(\fm)^{0}$ and recall that $k(\fm)=K[t^{\pm
1}]$, for some indeterminate $t$ over $K$. Thus, one has isomorphisms
of graded $k(\fm)$-modules
\begin{align} \Hom_{k}^{*}(k(\fm),k) & \cong
\Hom_{K}^{*}(k(\fm),\Hom_{k}(K,k)) \label{eq:field-duality}\\ & \cong
\Hom_{K}^{*}(k(\fm),K) \notag \\ & \cong k(\fm)\,. \notag
\end{align} The first isomorphism is adjunction, the second holds
because $\rank_{k}K$ is finite, by Lemma~\ref{lem:residue}, and the
last one is a direct verification.

The $R$-module $\Hom_{k}^{*}(R,k)$ is injective and hence so is its
$\fm$-torsion submodule
\[ \bigcup_{i\ges 0}\Hom_{R}^{*}(R/\fm^{i},\Hom_{k}^{*}(R,k))\,.
\] This is precisely the $R$-module $I$, by standard adjunction. Thus
$I$ must be a direct sum of shifts of injective hulls of $k(\fm)$. It
remains to verify that $I$ is in fact just the injective hull of
$k(\fm)$. To this end, note that for any $\fm$-torsion $R$-module $N$,
one has isomorphisms of graded $k(\fm)$-modules
\begin{align*} \Hom_{R}^{*}(N,I) &\cong
\Hom_{R}^{*}(N,\Hom_{k}^{*}(R,k)) \\ & \cong \Hom_{k}^{*}(N,k)\,.
\end{align*} This settles the last assertion in the desired result and
also yields the first isomorphism below of graded $k(\fm)$-modules.
\[ \Hom_{R}^{*}(k(\fm),I) \cong \Hom_{k}^{*}(k(\fm),k) \cong k(\fm)
\] The second one is by \eqref{eq:field-duality}. It follows that $I$
is the injective hull of $k(\fm)$.
\end{proof}

The next result, whose proof is rather similar to the one above, gives
yet another way to get to the injective hull at a closed point of
$\Proj$.

Recall that $I(\fp)$ denotes the injective hull of $A/\fp$ for any
$\fp$ in $\Spec A$.

\begin{lemma}
\label{lem:hull2} Let $A\to B$ be a homomorphism of graded commutative
algebras, let $\fm$ be a closed point in $\Proj B$, and set
$\fp:=\fm\cap A$. If the extension of residue fields $k(\fp)\subseteq
k(\fm)$ is finite, then the $B$-module
$\gam_{\mcV(\fm)}\Hom^{*}_{A}(B,I(\fp))$ is the injective hull of
$B/\fm$, and for any $\fm$-torsion $B$-module $N$, adjunction induces
an isomorphism
\[ \Hom_{B}(N,I(\fm)) \cong \Hom_{A}(N,I(\fp))\,.
\]
\end{lemma}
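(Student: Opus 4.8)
The plan is to mimic the proof of Lemma~\ref{lem:hull}, replacing vector-space duality by $\Hom_A(-,I(\fp))$ and working over the base $A_\fp$ instead of over $k$. First I would localise: since $\fm$ is a closed point of $\Proj B$ and $\fp=\fm\cap A$, localisation at $\fp$ reduces the situation to the local rings $A_\fp\to B_\fp$, and $\fm B_\fp$ is a maximal ideal of $B_\fp$ (up to the irrelevant ideal). The key algebraic input is that the $A_\fp$-module $I(\fp)$ is an injective $A_\fp$-module, so $\Hom^*_A(B,I(\fp))\cong\Hom^*_{A_\fp}(B_\fp,I(\fp))$ is an injective $B_\fp$-module; hence its $\fm$-torsion submodule $J:=\gam_{\mcV(\fm)}\Hom^*_A(B,I(\fp))$ is again injective over $B$, and therefore a direct sum of shifts of copies of $I(\fm)$. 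As in the earlier lemma, by standard adjunction $J=\bigcup_i\Hom^*_B(B/\fm^i,\Hom^*_A(B,I(\fp)))\cong\bigcup_i\Hom^*_A(B/\fm^i,I(\fp))$, and for any $\fm$-torsion $B$-module $N$ one gets
\[
\Hom^*_B(N,J)\cong\Hom^*_B(N,\Hom^*_A(B,I(\fp)))\cong\Hom^*_A(N,I(\fp))\,,
\]
which already establishes the second displayed isomorphism of the lemma (taking degree-zero parts).

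It then remains to pin down that $J$ is \emph{exactly} the injective hull of $B/\fm$, i.e. that only one shift of $I(\fm)$ occurs and with multiplicity one. For this I would compute $\Hom^*_B(k(\fm),J)$ and show it is isomorphic to $k(\fm)$ as a graded $k(\fm)$-module (a single copy in a single degree), exactly as in Lemma~\ref{lem:hull}. By the adjunction above, $\Hom^*_B(k(\fm),J)\cong\Hom^*_A(k(\fm),I(\fp))$; since $k(\fm)$ is annihilated by $\fm$, hence by $\fp$, this factors as $\Hom^*_{A/\fp}(k(\fm),\Hom^*_A(A/\fp,I(\fp)))$, and $\Hom^*_A(A/\fp,I(\fp))$ is the $\fp$-torsion part of $I(\fp)$, namely a shift of $k(\fp)$ itself (since $I(\fp)$ is the hull of $A/\fp$, its socle over $A/\fp$ is $k(\fp)$ up to one shift). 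So the computation reduces to $\Hom^*_{k(\fp)}(k(\fm),k(\fp))$ up to a shift. Because the residue field extension $k(\fp)\subseteq k(\fm)$ is finite by hypothesis, and both $k(\fp)$, $k(\fm)$ are graded fields of the form (field)$[t^{\pm1}]$, this Hom-module is free of rank one over $k(\fm)$ in a single degree; this is the analogue of the chain \eqref{eq:field-duality}, now with $k$ replaced by $k(\fp)$ and finiteness of $\rank_k K$ replaced by finiteness of the extension $k(\fp)\subseteq k(\fm)$.

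I would assemble these pieces in the following order: (i) reduce to $A_\fp\to B_\fp$ and record that $I(\fp)$ is $A_\fp$-injective; (ii) show $\Hom^*_A(B,I(\fp))$ is $B$-injective and extract the adjunction isomorphisms for $\fm$-torsion $N$, yielding the stated $\Hom$-identity; (iii) conclude $J$ is a sum of shifted copies of $I(\fm)$; (iv) compute $\Hom^*_B(k(\fm),J)\cong k(\fm)$ (up to shift) using the finiteness of the residue field extension, forcing $J$ to be a single shifted copy of $I(\fm)$; (v) since $J$ contains $B/\fm$ (its socle is $k(\fm)$) as an essential submodule, conclude $J$ is the injective hull, with the correct shift normalisation. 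The main obstacle I anticipate is step (iv): being careful about the grading shifts — identifying precisely in which internal degree the socle of $I(\fp)$ (over $A/\fp$) and the $\Hom$ into it sit — so that the statement "$J$ is the injective hull of $B/\fm$" holds on the nose rather than up to an unspecified twist. Everything else is a formal transcription of the argument for Lemma~\ref{lem:hull} with $\Hom^*_A(-,I(\fp))$ in place of $\Hom^*_k(-,k)$.
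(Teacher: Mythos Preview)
Your proposal is correct and follows essentially the same route as the paper: show that $J=\gam_{\mcV(\fm)}\Hom^*_A(B,I(\fp))$ is injective (hence a sum of shifted copies of $I(\fm)$), derive the adjunction isomorphism $\Hom^*_B(N,J)\cong\Hom^*_A(N,I(\fp))$ for $\fm$-torsion $N$, and then compute $\Hom^*_B(k(\fm),J)\cong k(\fm)$ via the chain $\Hom^*_A(k(\fm),I(\fp))\cong\Hom^*_{k(\fp)}(k(\fm),\Hom^*_A(k(\fp),I(\fp)))\cong\Hom^*_{k(\fp)}(k(\fm),k(\fp))\cong k(\fm)$, the last step using finiteness of $k(\fp)\subseteq k(\fm)$. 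The paper does not bother with the preliminary localisation step and works directly over $k(\fp)$ rather than $A/\fp$ in the socle computation (so no shift appears), but otherwise the arguments coincide.
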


\begin{proof} 
  The $B$-module $I:=\gam_{\mcV(\fm)}\Hom^{*}_{A}(B,I(\fp))$ is
  injective, for it is the $\fm$-torsion submodule of the injective
  $B$-module $\Hom^{*}_{A}(B,I(\fp))$. As $\fm$ is a closed point, $I$
  is a direct sum of shifts of copies of $I(\fm)$. It remains to make
  the computation below:
\begin{align*} 
  \Hom^{*}_{B}(k(\fm),I) &\cong \Hom^{*}_{B}(k(\fm),
                           \Hom^{*}_{A}(B,I(\fp))) \\ 
                         &\cong \Hom^{*}_{A}(k(\fm),I(\fp)) \\
                         &\cong
                           \Hom^{*}_{k(\fp)}(k(\fm),\Hom^{*}_{A}(k(\fp),I(\fp))\\ 
                         &\cong
                           \Hom^{*}_{k(\fp)}(k(\fm),k(\fp)) \\ 
                         &\cong k(\fm)
\end{align*} 
These are all isomorphisms of $k(\fm)$-modules. The last
one is where the hypothesis that $k(\fm)/k(\fp)$ is finite is
required. This implies that $I\cong I(\fm)$. Given this, the last
isomorphism follows by standard adjunction.
\end{proof}

\begin{ack} 
  It is a pleasure to thank Tobias Barthel for detailed comments on an
  earlier version of this manuscript.
\end{ack}

\end{document}